\documentclass[11pt,reqno]{amsart}

\usepackage{enumitem,kantlipsum}
\usepackage[table]{xcolor} 
\usepackage[table]{xcolor} 
\usepackage{booktabs} 
\usepackage{adjustbox} 
\usepackage{cancel}
\usepackage{epigraph}
\usepackage{geometry}                
\usepackage{subcaption}
\usepackage{moresize}
\setlist[enumerate, 1]{label=\arabic*.  }
\usepackage{float}
 \usepackage{pdfpages}
 \usepackage[
    backend=biber,   
    style=numeric,   
    url=false,
    doi=false,
    eprint=true,      
    maxbibnames=10,
]{biblatex}

\usepackage{cancel}

\usepackage[dvipsnames]{xcolor}
\usepackage{graphicx}
\usepackage{amssymb}
\usepackage{amsthm}
\usepackage{epstopdf}
\RequirePackage{doi}
\usepackage{hyperref}
\usepackage{graphicx}
\usepackage{forest}
\usepackage{tikz}
\usepackage{tikz-cd}
\usepackage{stmaryrd}
\usepackage{enumitem}
\usepackage{comment} 
\usepackage{newtxmath}
\usepackage{nicefrac}
\usepackage{pgfplots}
\usetikzlibrary{arrows,decorations.pathreplacing,calligraphy,patterns,positioning,matrix,calc,bending,decorations.pathmorphing,fit,backgrounds}

\pgfplotsset{compat=1.18}

\DeclareMathOperator{\diff}{diff}
\DeclareMathOperator{\sasc}{sasc}

\DeclareMathOperator{\lucky}{lucky}

\DeclareMathOperator{\Rec}{Rec}

\DeclareMathOperator{\psa}{psa}

\DeclareMathOperator{\probes}{probes}
\DeclareMathOperator{\wait}{wait}

\DeclareMathOperator{\Co}{Co}
\DeclareMathOperator{\rank}{rank}
\DeclareMathOperator{\corank}{corank}

\newcommand{\pfunction}[2][matrix]{%
  \begingroup
    \setlength{\arraycolsep}{3pt}
    \StrSubstitute{#2}{;}{\\}[\pfunc@tmp]%
    \StrSubstitute{\pfunc@tmp}{,}{ & }[\pfunc@tmp]%
    \begin{#1}
      \pfunc@tmp
    \end{#1}
  \endgroup
}

\definecolor{USred}{cmyk}{0,1.00,0.65,0.34}
\renewcommand{\emph}[1]{{\textcolor{USred}{\em #1}}}

\hypersetup{
  colorlinks,
  citecolor=USred,
  linkcolor=USred,
  urlcolor=USred}

\def\ZZ{\mathbb{Z}}

\def\NN{\mathbb{N}}

\def\SS{\mathbb{S}}

\theoremstyle{definition}
\newtheorem{de}{Definition}[section]
\theoremstyle{plain}
\newtheorem{thm}[de]{Theorem}

\newtheorem{lem}[de]{Lemma}
\newtheorem{p}[de]{Proposition}
\newtheorem{cor}[de]{Corollary}

\newtheorem{ex}[de]{Example}

\theoremstyle{definition}

\title{The priority lattice. }
\author{Adrián Lillo}
\author{Mercedes Rosas}

\address[A. Lillo, M. Rosas]{Departamento de Álgebra, Universidad de Sevilla}
\email[A. Lillo]{alillo@us.es}
\email[M. Rosas]{mrosas@us.es}

\begin{document}
\maketitle 

\begin{abstract}
 We introduce the priority lattice, a structure arising from the priority search algorithm on rooted trees and forests. We prove bijectively that its maximal chains are labeled by parking functions, and that the maximal chains of its principal ideals are labeled by partial parking functions. We establish that it is a graded lattice and compute its Möbius function and characteristic polynomials.
\end{abstract}

\epigraph{\em 
\flushright
ye can not see the wood for trees, \\ John Heywood, 1546}

\section{Introduction}

We introduce the \emph{priority lattice} as a unifying extension of two ideas: Richard P. Stanley’s  influential observation that maximal chains in the noncrossing partition lattice can be labeled by parking functions \cite{StanleyMaximalChainsParking}, and a bijection between parking functions and rooted trees that respect priority statistics, called the weary parking bijection and introduced in \cite{LRT-Weary}. As in the lattice of noncrossing partitions, maximal chains in the priority lattice are labeled by parking functions. Additionally, maximal chains of the order ideals are labeled by partial parking functions (where the number of cars and parking spots may differ). 

The nodes of the priority lattice are \emph{priority forests}, together with a top element that accounts for all failed parking processes. The priority forest of a partial parking function encodes the preferred spot of each car according to its final position. 
Parking functions with a fixed priority tree label chains passing through that tree, while partial parking functions with a fixed priority forest label chains in the principal order ideal with that forest as its top element.

This work begins by showing that the maximal chains of the priority lattice label ordered forests.
An ordered forest, or ordered $(m,n)$-forest to be more precise, is a rooted forest with $n + 1$ nodes and $m$ edges, together with a total order
 of its  component trees. We ask that its non-root vertices are labeled by $[m]$, while roots remain unlabeled and are distinguished by the index of their component tree. Cayley's forest formula says that the set of ordered $(m,n)$-forests has cardinality
    \begin{align*}
(n-m+1)(n+1)^{m-1}&\qquad &\text{\qquad(Cayley's forest formula,   \cite{ thebook,Cayley, LRT-bijective-visit}).}
    \end{align*}
A rooted tree with an unlabeled root can be viewed as an ordered forest with 
$n=m$, in this case, we recover Cayley’s formula for counting labeled trees.
In the setting of ordered forests, priority forests arise  from the use of the \emph{priority search algorithm}, an ordered forest traversal described in Section \ref{se:Priority_search_forest}.
The  priority search algorithm  interprets node labels as  priority ranks, always visiting the available node with highest priority.  During a priority search, all nodes of a component tree are visited before the search proceeds to the next tree. The \emph{priority traversal} of an ordered forest $F$ is the permutation that keeps track of the nodes as they are visited on a priority search. The priority forest of $F$ is the ordered forest resulting from relabeling the vertices of $F$ according to the order in which they are traversed. The component trees of a priority forest are always increasing trees.

Fix $n\ge0$. The priority lattice $\Pi(n)$ has as underlying set the collection of all priority forests labeled with  $\{0,1 \ldots, n\}$, together with an additional element $\hat{1}$. The partial order on  $\Pi(n)$ is defined by the containment of edge sets, with $\hat1$ explicitly set as the top element. 
The priority lattice is a graded lattice.
We endow $\Pi(n)$ with a partial \emph{edge labeling $\lambda$} that associates a permutation $\lambda(C)$ to any maximal chain $C$. Let  $P$ be a priority forest with $m$ edges. In Theorem~\ref{thm:maximal_chains_forests}, we define a bijection that sends a maximal chain $C$ of $[\hat0, P]$ labeled $\lambda(C)$ into an ordered $(m,n)$-forest with priority forest $P$ and priority traversal $\lambda^{-1}(C)$. Therefore, the maximal chains of $\Pi(n)$ are labeled by rooted trees, while the maximal chains of its proper intervals are indexed by ordered forests.

Cameron et al \cite{CameronJohannsenPrellbergSchweitzer2009} showed that there is another interesting set of objects counted by Cayley's forest formula, the set of \emph{\((m,n)\)-parking functions}, partial  parking functions with $m$ cars and $n$ spots. Let  $P$ be a priority forest with $m$ edges, in Theorem \ref{thm:maximal_chains_parking_functions}, we present a bijection between maximal chains of $[\hat0, P]$, and $(m,n)$-parking functions with priority forest  $P$ such that the bird's eye photo of the parked cars is $\lambda(C)^{-1}$.  
Combining Theorem \ref{thm:maximal_chains_forests} with Theorem  \ref{thm:maximal_chains_parking_functions},  we obtain a bijection between ordered \((m,n)\)-forests and \((m,n)\)-parking functions, the \emph{forest weary bijection}.
We describe this bijection explicitly, show that extends the weary bijection of \cite{LRT-Weary} to partial parking functions, and that it respects the priority  statistics of \cite{LRT-Weary}.
A translation of the main features of the priority lattice to the language of partial parking functions closes this study. Other bijections between partial parking functions and ordered forests present in the literature can be found in \cite{KenyonYin2023, kostic2006multiparkingfunctionsgraphsearching}.

At this point, we turn our attention back to the study of the priority lattice. We compute its corank generating function, and derive from it its Whitney numbers of the first and  second kind. Even if $\Pi(n)$ is neither distributive nor self-dual for $n \ge 3$, all intervals of $\Pi(n)$ that do not contain the top element are distributive for all $n\ge0$.

Let $P \le P'$ be priority forests. We show that the partial edge labeling of $\Pi(n)$ restricts to an EL-labeling of $[P,P']$. Then, in Lemma \ref{lem:Mobius_priority_forests}, we rest on a theorem of Stanley \cite{stanley1971modular} to compute $\mu(P, P')$. From here, we show in Corollary \ref{cor:Complete_Mobius}  that  $\mu(\hat0,\hat1) = 0$ for all $n\ge1$. 
On the other hand, upper intervals (intervals the form
$[P,\hat1]$ for some priority forest $P$) are not distributive. In Lemma~\ref{lem:mobius_upper_intervals}, we compute their Möbius function combining Lemma~\ref{lem:Mobius_priority_forests} with a sign-reversing involution. 
The work concludes with the computation of the characteristic polynomial of the priority lattice and its proper ideals. We show that, given an  $(m,n)$-priority forest $P$,
    \begin{align*}
  \chi([\hat0,P],q) =   q^{m-\sasc(P)} (q-1)^{\sasc(P)} = q^{m-\lucky(\pi)} (q-1)^{\lucky(\pi)}, 
  \end{align*}
   where $\sasc(P)$ is the number of small ascents in $P$. 
   The forest weary bijection implies the second equality, where  $\lucky(\pi)$ is the number of lucky cars  in any partial parking function $\pi$ with priority forest  $P$. This result is presented in Theorem   \ref{thm_charpoly}.
   Additionally, we also show that for any $n\ge1$, the characteristic polynomial of $\Pi(n)$ is 
    $
    q (q-1)^{n}
    $. 
    We close the paper with several open questions and final remarks.

\section{Basic definitions}
\label{se_basic_def}

A \emph{rooted tree} is a connected acyclic graph with a distinguished root, and a \emph{rooted forest} is a collection of rooted trees. A rooted tree with only the root is called trivial. An \emph{ordered $(m,n)$-forest} is a rooted forest $F$ with $n+1$ nodes and $m$ edges, whose component trees $T_0, T_1, \dots, T_{\,n-m}$ are totally ordered, with roots marked $\circ, \circ_1, \dots, \circ_{\,n-m}$ to record the order, and with its $m$ non-root vertices labeled by $[m]$.

 We assume that $n$ and $m$ always stand for nonnegative integers. We write $[n]$ for the set $\{1,2,\ldots,n\}$ and $[n]_0$ for the set $\{0,1,\ldots,n\}$. The interval of integers $[m,n]$ is the set of integers $z$ such that $m \le z \le n$. We consider these sets as ordered sets. Additionally, for any graph~$G$, we denote by $E(G)$ its edge set.

A \emph{partial function} $f : [m] \to [n]$ is a function from a subset $S$ of $[m]$ to $[n]$, called its \emph{domain}. The image of the elements of $[n] \setminus S$ under $f$ is undefined. As usual, we  identify $f$ with  the directed graph with vertex set $[m]\times [n]$ and a directed edge $(i,\omega(i))$ for each $i$ in its domain.
A partial function $f$ is said to be a \emph{partial permutation} when the restriction of $f$ to its domain is an injective map.
 In this situation, the \emph{inverse} of  $f$ is defined as  the partial permutation $f^{-1} : [n] \to [m]$ whose digraph has vertex set $[n]\times [m]$ and whose edge set is obtained by reversing the direction of all edges in the  digraph of $f$. 
 It is sometimes convenient to write partial functions using the usual two-line notation of permutations, and adding a symbol $-$ below those elements that do not belong to its domain. For simplicity, we usually omit the adjective ``partial'.

  \subsection{Priority forests and the priority first search algorithm}
\label{se:Priority_search_forest}

The \emph{priority-first search} is an ordered forest traversal that proceeds according to a priority function, giving higher priority to nodes with smaller labels. 
It is defined as follows.
Let $F = T_0 \ T_1 \ldots T_{n-m}$ be an ordered  $(m,n)$-forest.
Initially,  all nodes of $F$ are blocked except the children of $\circ$, the root of the first component tree. We proceed recursively. At each step, we visit the unblocked node with the highest priority, and unblock all of its children.  After all the nodes of a component tree have been visited, we move to the next one, read its root, unblock its root  children, and iterate this process. 
The resulting traversal  of $F$, where we record the roots by $-$, is referred to as its \emph{priority traversal}, and denoted by
\emph{$\omega_F$}.  Since all priority searches start at $\circ$, we do not include its visit in the priority traversal. We conclude that $\omega_F$ is the one-line word of a partial permutation of $[m]$.

Define the \emph{priority forest of $F$} to be the rooted forest obtained by relabeling the  nodes of $F$ according to the order they are visited during priority-first search, with $\circ$ labeled 0. The component trees are ordered according to the labels of their roots. 
More generally, a \emph{priority forest} (labeled with $[n]_0$ and with $m$ edges) is a rooted forest 
$(T_0, T_1, \ldots, T_{n-m})$ with all  component trees  increasing and ordered 
according to the root's labels, and where for all $j<k$, every label in $T_j$ 
is smaller than every label in $T_k$. Thus, the vertex set of each tree is an interval of integers; see Figure \ref{fig:priority forest}(B). Note that we do not call priority forests ``ordered forests'', since all their nodes, including the roots, are labeled. 

\begin{ex}
An ordered $(4,8)$-forest $F$  with priority traversal $\omega_F =   - \ - \ 4 \ 1 \ 3 \ - \ - \ 2$, and its priority forest $P$ are illustrated in Figure \ref{fig:priority forest}.  
\begin{figure}[h!]
    \centering
    \begin{subfigure}[b]{0.48\textwidth}
        \centering
        \resizebox{0.75\textwidth}{!}{
        \normalfont \small
\begin{tikzpicture}[
    level distance=0.6cm,
    level 1/.style={sibling distance=0.75cm},
    level 2/.style={sibling distance=0.75cm},
    record/.style={
        circle,
        draw=black,
        fill=black,
        inner sep=1.3pt,
        label={#1},
        solid
    },
]
\def\rootsep{1.25}
    \node[record, label={right:$\circ$}] (1) at (0*\rootsep, 0) {};
    \node[record, label={right:$\circ_1$}] (1) at (1*\rootsep, 0) {};
    \node[record, label={right:$\circ_2$}] (2) at (2*\rootsep, 0) {}
        child {node[record, label={right:4}] (3) {}
            child {node[record, label={right:1}] (1) {}
                edge from parent[solid]}
            child {node[record, label={right:3}] (5) {}
                edge from parent[solid]}};
    \node[record, label={right:$\circ_3$}] (6) at (3*\rootsep, 0)  {};
    \node[record, label={right:$\circ_4$}] (7) at (4*\rootsep, 0) {} 
        child {node[record, label={right:2}] (8) {}
                edge from parent[solid]};
\end{tikzpicture}
        }
        \caption{An ordered $(4,8)$-forest $F$.}
        \label{fig:forest_F}
    \end{subfigure}
    \hfill
    \begin{subfigure}[b]{0.48\textwidth}
        \centering
        \resizebox{0.75\textwidth}{!}{\normalfont \small
\begin{tikzpicture}[
    level distance=0.6cm,
    level 1/.style={sibling distance=0.75cm},
    level 2/.style={sibling distance=0.75cm},
    record/.style={
        circle,
        draw=black,
        fill=black,
        inner sep=1.3pt,
        label={#1},
        solid
    },
]
\def\rootsep{1.25}
    \node[record, label={right:0}] (1) at (0*\rootsep, 0) {};
    \node[record, label={right:1}] (1) at (1*\rootsep, 0) {};
    \node[record, label={right:2}] (2) at (2*\rootsep, 0) {}
        child {node[record, label={right:3}] (3) {}
            child {node[record, label={right:4}] (1) {}
                edge from parent[solid]}
            child {node[record, label={right:5}] (5) {}
                edge from parent[solid]}};
    \node[record, label={right:6}] (6) at (3*\rootsep, 0)  {};
    \node[record, label={right:7}] (7) at (4*\rootsep, 0) {} 
        child {node[record, label={right:8}] (8) {}
                edge from parent[solid]};
\end{tikzpicture}
        }
        \caption{ The priority forest $P$ of $F$.}
        \label{fig:forest_PF}
    \end{subfigure}
    \caption{An ordered forest and its priority forest.}
    \label{fig:priority forest}
\end{figure}
\end{ex}

\section{The priority lattice}
\label{se_priority_lattice}

\subsection{The priority lattice}
\label{se:priority_lattice_def}
 In this section, we introduce the main construction of this article, the  priority lattice  $\Pi(n)$.
The underlying set of $\Pi(n)$ consists of all priority forests labeled with $[n]_0$, together with an extra element $\hat 1$. 
The  order  relation $\le$  on $\Pi(n)$ is defined as follows. Given two  priority forests $P$ and $P'$,  $P \le P'$  if and only if $E(P) \subseteq E(P')$. On the other hand,  $\hat 1$ is set to be the top element of $\Pi(n)$. Thus, the bottom element $\hat 0$ of $\Pi(n)$ is the priority forest that contains no edges. We conclude that  $\Pi(n)$ is a bounded poset.
The Hasse diagrams of $\Pi(2)$ and $\Pi(3)$ are presented in Figures \ref{fig:L2} and \ref{fig:Pi3}.

    \begin{figure}[h!]
    \centering
   \resizebox{0.3\textwidth}{!}{
   \tikzset{
    treenode/.style={
        circle,
        draw=black,
        fill=black,
        inner sep=0.65pt,
        outer sep=0pt,
        label={#1},
        solid
    },
    every label/.style={
            font=\tiny\fontsize{6}{7}\selectfont,
            label distance = 0.1pt,
            inner sep=1pt,
            outer sep=1pt,
        }
}
\begin{tikzpicture}[
    scale=0.7,
    level distance=0.3cm,
    level 1/.style={sibling distance=0.4cm},
    level 2/.style={sibling distance=0.4cm},
    pics/forest0/.style = {
        code = {
            \coordinate (#1-root) at (0,0);
            \matrix[matrix of nodes, row sep=0.3cm, column sep=0.1cm, ampersand replacement=\&]{
                \node[treenode, label={above:0}] (F0-T0-V-0) {}; \&
                \node[treenode, label={above:1}] (F0-T1-V-1) {}; \&
                \node[treenode, label={above:2}] (F0-T2-V-2) {}; \\
            };
        }
    },
    pics/forest1/.style = {
        code = {
            \coordinate (#1-root) at (0,0);
            \matrix[matrix of nodes, row sep=0.3cm, column sep=0.1cm, ampersand replacement=\&]{
                \node[treenode, label={above:0}] (F1-T0-V-0) {}; \&
                \node[treenode, label={above:1}] (F1-T1-V-1) {}
                    child { node[treenode, label={[overlay]right:2}] (F1-T1-V-2) {} }; \\
            };
        }
    },
    pics/forest2/.style = {
        code = {
            \coordinate (#1-root) at (0,0);
            \matrix[matrix of nodes, row sep=0.3cm, column sep=0.1cm, ampersand replacement=\&]{
                \node[treenode, label={above:0}] (F2-T1-V-1) {}
                    child { node[treenode, label={[overlay]right:1}] (F2-T1-V-2) {} }
                    child { node[treenode, label={right:2}] (F2-T1-V-3) {} }; \\
            };
        }
    },
    pics/forest3/.style = {
        code = {
            \coordinate (#1-root) at (0,0);
            \matrix[matrix of nodes, row sep=0.3cm, column sep=0.1cm, ampersand replacement=\&]{
                \node[treenode, label={above:0}] (F3-T0-V-0) {}
                    child { node[treenode, label={[overlay]right:1}] (F3-T0-V-1) {} }; \&
                \node[treenode, label={above:2}] (F3-T1-V-2) {}; \\
            };
        }
    },
    pics/forest4/.style = {
        code = {
            \coordinate (#1-root) at (0,0);
            \matrix[matrix of nodes, row sep=0.3cm, column sep=0.1cm, ampersand replacement=\&]{
                \node[treenode, label={above:0}] (F4-T0-V-0) {}
                    child { node[treenode, label={[overlay]right:1}] (F4-T0-V-1) {}
                        child { node[treenode, label={[overlay]right:2}] (F4-T0-V-2) {} } }; \\
            };
        }
    },
    pics/forest5/.style = {
        code = {
            \coordinate (#1-root) at (0,0);
            \node (one) at (0, 0) { $\hat 1$};
            };
        }
]

\def\xdist{2.25}
\def\ydist{2.75}
\pic (f0) at (0,0) {forest0};
\pic (f1) at (\xdist,\ydist) {forest1};
\pic (f3) at (-\xdist,\ydist) {forest3};
\pic (f4) at (\xdist,2*\ydist) {forest4};
\pic (f2) at (-\xdist,2*\ydist) {forest2};
\pic (f5)  at (0,3*\ydist) {forest5};

\begin{pgfonlayer}{background}

\draw[gray] (f0-root) -- node[midway, USred, font=\scriptsize, fill=white, inner sep=1pt] {2} (f1-root);
\draw[gray] (f0-root) -- node[midway, USred, font=\scriptsize, fill=white, inner sep=1pt] {1} (f3-root);
\draw[gray] (f3-root) -- node[midway, USred, font=\scriptsize, fill=white, inner sep=1pt] {2} (f2-root);
\draw[gray] (f1-root) -- node[midway, USred, font=\scriptsize, fill=white, inner sep=1pt] {1} (f4-root);
\draw[gray] (f3-root) -- node[midway, USred, font=\scriptsize, fill=white, inner sep=1pt] {2} (f4-root);
\draw[gray] (f2-root) -- (f5-root);
\draw[gray] (f4-root) -- (f5-root);

\def\circr{12mm}

\def\boxw{1.75cm} 
\def\boxh{1.5cm} 
\def\cornerr{10pt} 

\foreach \f in {f0,f1,f2,f3,f4}{
  \draw[gray,rounded corners=\cornerr, fill=white]
    ($(\f-root)+(-0.5*\boxw,-0.5*\boxh) - (0mm, 2mm)$) rectangle
    ($(\f-root)+(0.5*\boxw,0.5*\boxh)$);
  \coordinate (\f-mu) at ($(\f-root)-(0.65*\boxw, 1mm)$);
}
\draw[gray, rounded corners =0.65*\cornerr, fill=white] 
($(f5-root)+(-0.35*\boxw,-0.35*\boxh) - (0mm, 2mm)$) rectangle
    ($(f5-root)+(0.35*\boxw,0.35*\boxh)$);
\coordinate (one-mu) at ($(f5-root)-(0.5*\boxw, 1mm)$);
\end{pgfonlayer}

\node[green!50!black] at (f0-mu)  {\scriptsize $1$};
\node[green!50!black, xshift=-1mm] at (f1-mu)  {\scriptsize $-1$};
\node[green!50!black] at (f2-mu)  {\scriptsize $0$};
\node[green!50!black, xshift=-1mm] at (f3-mu)  {\scriptsize $-1$};
\node[green!50!black] at (f4-mu)  {\scriptsize $1$};
\node[green!50!black] at (one-mu)  {\scriptsize $0$};
\end{tikzpicture}
   }
    \caption{The Hasse diagram of $\Pi(2)$, its partial edge labeling, and its Möbius function values. }
    \label{fig:L2}
    \end{figure}

The \emph{cover relations} of $\Pi(n)$ can be easily described. If both $P$ and $P'$ are  priority forests and $P'$ covers $P$, written \emph{$P \lessdot P'$}, when $P'$ can be obtained from $P$ by adding a single edge to one of its component trees. On the other hand, $\hat 1 $ covers precisely the set of priority trees. Priority trees are always increasing trees and are in bijection with permutations in $\SS_n$.

The \emph{meet} and the \emph{join} on the priority lattice can be described as follows.    For any element  $x$ of $\Pi(n)$,  $\hat 1 \wedge x = x$, and $\hat 1 \vee x = \hat 1.$ On the other hand, if both $P$ and $P'$ are priority forests, 
           \begin{align*}
             E(P\wedge P')  =  E(P) \cap E(P') \qquad \text{ and } \qquad 
               E(P\vee P')  =  E(P) \cup E(P').
           \end{align*}

  \begin{figure}[h!]
    \centering
    \resizebox{0.62\textwidth}{!}{\input{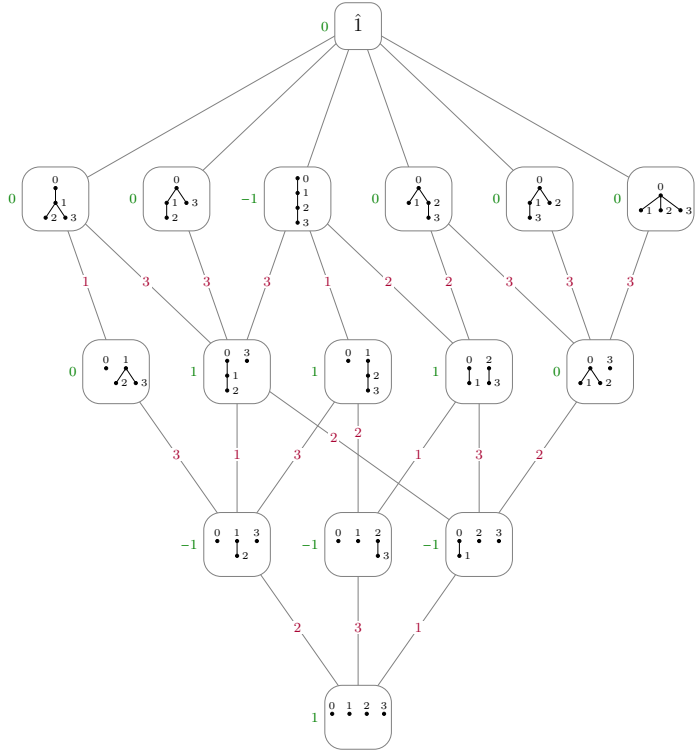}}
    \caption{The Hasse diagram of $\Pi(3)$, its partial edge labeling, and its Möbius function values.}
    \label{fig:Pi3}
\end{figure}

      \begin{lem} \label{Phi_graded}
          The priority lattice $\Pi(n)$ is a graded lattice.
          \end{lem}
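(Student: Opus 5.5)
The plan is to handle the two claims of Lemma~\ref{Phi_graded} separately: first that $\Pi(n)$ is a lattice, then that it is graded. Both rest on an explicit description of priority forests. The first step is to show that priority forests labeled with $[n]_0$ are exactly the increasing forests on $[n]_0$ whose components are intervals of integers. To see that every increasing tree is its own priority tree, note that the smallest unvisited label $v$ always has its parent $p(v)<v$ already visited. Hence $v$ is unblocked, so priority search visits the vertices in label order. Since an increasing forest is determined by its parent map $p$ with $p(v)<v$, a priority forest amounts to the following data: a partial map $p$ with $p(v)<v$ such that every component is an interval. Equivalently, whenever $p(v)$ is defined, all $w$ with $p(v)<w<v$ lie in the component of $v$.

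For the lattice property I would use the standard fact that a finite bounded poset in which every pair has a join is a lattice. The meet is then the join of all common lower bounds, which is nonempty because $\hat 0$ is a lower bound. I would deliberately avoid proving that the meet is given by $E(P)\cap E(P')$. For example, the trees with edges $\{01,12,03\}$ and $\{01,02,03\}$ intersect in $\{01,03\}$, whose components $\{0,1,3\}$ and $\{2\}$ are not intervals. So the displayed formula for the meet should be read as "the largest priority forest below both."

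For joins, let $P,P'$ be priority forests. If some vertex has different parents in $P$ and $P'$, then no priority forest contains $E(P)\cup E(P')$, and $P\vee P'=\hat 1$. Otherwise the union defines a single partial map $p$ with $p(v)<v$. It is therefore an increasing forest, each of whose components is rooted at its minimum. Each of its components is a union of components of $P$ and of $P'$ chained by overlaps, and a connected union of overlapping intervals is an interval. So the union is a priority forest and is clearly the least upper bound. The case involving $\hat 1$ is trivial.

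For gradedness, I would set $\rho(P)=|E(P)|$ and $\rho(\hat 1)=n+1$, and show that every cover increases $\rho$ by exactly one. Covers into $\hat 1$ come only from trees, which have $n$ edges. The key step, which I expect to be the main obstacle, is to show that if $P<P'$ are priority forests, then some single edge of $E(P')\setminus E(P)$ can be added to $P$ leaving a priority forest. I would choose the edge $p'(v)\,v$ with $v$ minimal among vertices whose parent in $P'$ is missing in $P$. Then $v$ is a root of $P$, so its component is an interval $[v,b]$. By minimality of $v$, and because the component of $v$ in $P'$ is an interval containing $[p'(v),v]$, all of $[p'(v),v-1]$ lies in the component of $P$ ending at $v-1$. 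Adding the edge therefore merges two adjacent intervals into one interval and keeps the forest increasing. It follows that every maximal chain from $\hat 0$ to $\hat 1$ has length $n+1$, so $\Pi(n)$ is graded.
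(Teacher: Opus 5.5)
Your proof is correct. It is also more complete than the paper's proof, which follows the same skeleton for gradedness but not for the lattice property.

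\textbf{What the paper does.} It proves only the gradedness half, using two observations:
\begin{enumerate}
\item The top forest of a maximal chain must be a tree, since otherwise one can graft the root of a component onto the root of the component to its left.
\item Consecutive forests in a maximal chain differ by one edge, since otherwise ``the one obtained attaching either edge to $P$'' would lie strictly between them.
\end{enumerate}
The lattice property is taken from the meet/join description stated before the lemma.

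\textbf{Where your route differs.} First, you actually prove that $\Pi(n)$ is a lattice. You compute joins: the union of edge sets when the parent maps agree, and $\hat 1$ otherwise. You then apply the criterion that a finite poset with $\hat 0$ and all pairwise joins is a lattice. You are also right that the paper's formula $E(P\wedge P')=E(P)\cap E(P')$ fails in general, and your example in $\Pi(3)$ is valid. The formula does hold inside a principal ideal $[\hat 0,P]$, where all forests share the parent map of $P$, so the later distributivity lemma is not affected.

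Second, your choice of the edge $p'(v)\,v$ with $v$ minimal is exactly the justification that the paper's phrase ``attaching either edge'' glosses over. An arbitrary edge of $E(P')\setminus E(P)$ does not work. For example, take $P=\hat 0$ and $P'$ the star with edges $01,02$ in $\Pi(2)$. Adding $02$ alone leaves components $\{0,2\}$ and $\{1\}$, which is not a priority forest. Your minimality argument shows that the chosen edge merges two adjacent intervals, and this is what makes the step rigorous.

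\textbf{One small unproved assertion.} You state without argument that covers into $\hat 1$ come only from trees. The paper's grafting observation fills this in one line: if $P$ has at least two components, attaching the root of $T_1$ to the root $0$ of $T_0$ merges two adjacent intervals and keeps the forest increasing. So $P$ is strictly below another priority forest and is not covered by $\hat 1$.

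In short, the paper's argument is shorter but leans on an unjustified intermediate-forest step and on a meet formula that is false globally. Yours is self-contained and rigorous for both halves of the statement.
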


    \begin{proof} We need to show that all maximal chains of $\Pi(n)$ have length $n+1$.   Assume $n>1$, and fix a maximal chain. The maximal forest of the chain has to be a tree, otherwise it would have at least two consecutive component trees, and we could graft the tree on the right to the root of the tree on the left and refine the chain. 
    On the other hand, if $P  \lessdot P'$ are two forest  in the chain, $E(P')\setminus E(P)$ has to consist of one edge. Otherwise, there would be a forest between them, the one obtained attaching either edge to $P$. 
    Finally, $\hat 1$ belongs to any maximal chain. Thus, the maximal chain has  length $n+1$. 
          \end{proof}

    The  \emph{rank function} $\rho : \Pi(n) \to \ZZ_{\ge0}$ is given by
        \[
        \rho(P) = 
        \begin{cases}|E(P)| &\text{if $P$ is a priority forest,}\\
        n+1 &\text{if $P= \hat 1$.}
        \end{cases}
        \]
       The rank $m$ of a priority forest $P$ equals its number of edges, whereas the \emph{corank} $n +1 - m$ equals the number of component trees of $P$.

The \emph{atoms} of $\Pi(n)$ (rank one elements) are the $(1, n)$-priority forests. These forests contain a unique small ascent, an  edge $\{i, i+1\}$ for some $i\leq n-1$.
On the other hand, the \emph{coatoms} of $\Pi(n)$, which are the rank $n$ elements, are precisely the increasing trees.


\subsection{The maximal chains of the priority lattice.}
\label{su:max_chain}

We show that  the maximal chains of   $I_P = [\hat 0, P]$, for a fixed priority forest $P$, are in bijection with  the set of ordered $(m,n)$--forests with  priority forest $P$. 
Assume that  $P$ and $P'$ stand for two priority forests in $\Pi(n).$
A  partial function $\lambda : E \to \NN,$ from the set of covering edges of the Hasse diagram of $\Pi(n)$ to the natural numbers is referred to as a \emph{partial edge labeling}.  

 If  $P' \lessdot P$, let $(P', P)$ be the edge representing this cover relation in the Hasse diagram of $\Pi(n)$. 
The \emph{label $\lambda(P', P)$} of $(P', P)$ is set to be the larger endpoint of the unique edge in  $E(P)\setminus E(P')$. 
  We do not assign any  label to the  cover edges involving $\hat 1$. 
  The Hasse diagrams of $\Pi(2)$ and $\Pi(3)$ appearing in Figures \ref{fig:L2} and \ref{fig:Pi3} include the partial edge labelings.

  Let $C :  P_0 \lessdot P_1 \lessdot \ldots \lessdot P_{k-1}\lessdot  P_k=P$ be  a maximal chain of  $[P_0, P]$. The labeling
\[
\lambda(C) = (\lambda(P_0, P_1), \lambda(P_1, P_2), \dots, \lambda(P_{k-1}, P))
\]
is always a partial permutation because the existence of a repeated letter on $\lambda(C)$ would translate into a cycle in $P$. 
We refer to $\lambda(C)$ as a \emph{Jordan-Hölder permutation} of $C$. 

Denote by $F_C(P)$ the ordered forest $\lambda(C)^{-1}(P)$ obtained after relabeling the non-roots of $P$ according to the partial permutation $\lambda(C)^{-1}$, and recording the relative order of the roots in the subscripts of the $\circ$'s. In Lemma \ref{lem: Jordan Holder => priority traversal}, we totally characterize $F_C(P)$.

\begin{lem}
\label{lem: Jordan Holder => priority traversal}
Let $P$ be a priority forest, and let $C$ be a maximal chain of $[\hat 0, P]$ with Jordan-Hölder permutation $\lambda(C)$. Then, $F_C(P)$ is the unique ordered forest with priority traversal $\lambda(C)^{-1}$ and priority forest $P$.
\end{lem}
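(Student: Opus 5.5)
The plan is to unwind the definitions and show directly that a priority search on $F_C(P)$ visits the vertices of $P$ in the order $1,2,\dots,n$. That single fact gives both claims: the priority forest of $F_C(P)$ is $P$, and its priority traversal is $\lambda(C)^{-1}$. Uniqueness is then formal. An ordered forest $F$ with priority forest $P$ is recovered from $P$ by writing, at the vertex of $P$ labeled $j$, the $j$-th letter of $\omega_F$ (a root symbol when that letter is $-$). So $P$ and $\omega_F$ together determine $F$.

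\textbf{Step 1: what a step of the chain looks like.} Write $C:\hat 0=P_0\lessdot P_1\lessdot\cdots\lessdot P_m=P$. For a non-root vertex $v$ of $P$, let $s(v)$ be the index $i$ with $\lambda(P_{i-1},P_i)=v$. Then $F_C(P)$ is $P$ with each non-root $v$ relabeled $s(v)$, and $\lambda(C)^{-1}$ is the word whose $j$-th letter is $s(j)$, or $-$ if $j$ is a root of $P$. Every $P_i$ is a priority forest, so its components are increasing trees on consecutive intervals of $[n]_0$. The cover $P_{i-1}\lessdot P_i$ adds an edge $\{u,v\}$ with $u<v$ that merges two components into a single increasing tree on an interval. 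Hence, just before step $s(v)$:
\begin{itemize}
\item $v$ is the root of its component;
\item the component containing $u$ is the interval immediately to the left, ending at $v-1$.
\end{itemize}

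\textbf{Step 2: the key comparison.} Component trees of $P$ and of $F_C(P)$ come in the same order, and the search exhausts each tree before moving to the next. It therefore suffices to work inside one component tree $T$ of $P$. Suppose the search has visited exactly the vertices $r,r+1,\dots,j$ of $T$, where $r$ is the root of $T$, and that $j+1\in T$. The unblocked vertices are the unvisited children of visited vertices. Since $T$ is increasing, the parent of $j+1$ lies in $\{r,\dots,j\}$, so $j+1$ is unblocked.

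Now let $w\neq j+1$ be any other unblocked vertex. Then $w>j+1$ and $p=\mathrm{parent}(w)\le j$. Just before step $s(w)$, by Step 1, the component of $p$ is an interval $[a,w-1]$ with $a\le p\le j$, so it contains $j+1$. Its root is $a\le j<j+1$, so $j+1$ is not a root of $P_{s(w)-1}$. Hence the edge of $j+1$ already exists at that time, which gives $s(j+1)<s(w)$. Therefore $j+1$ carries the smallest label among the unblocked nodes, and it is visited next. By induction on $j$, the search visits $T$ in the order $r,r+1,\dots$, and across trees the order is $0,1,\dots,n$.

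\textbf{Step 3: conclusion.} Relabeling $F_C(P)$ by visiting order returns $P$, and the traversal records $s(j)$ at position $j$ (or $-$ at roots), which is exactly $\lambda(C)^{-1}$. The main obstacle is Step 2: one must see that the interval structure of the intermediate priority forests forces the edge to $j+1$ to be added before the edge to any later sibling-type candidate. The rest is bookkeeping with the relabeling.
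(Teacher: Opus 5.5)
Your proof is correct, but it follows a different route from the paper's. The paper inducts on the rank of $P$. It removes the top cover $P'\lessdot P$, say with label $i$, and applies the induction hypothesis to $\lambda(C')^{-1}(P')$. It then argues that grafting the former root $i$ (now carrying the largest label $m$) onto the tree to its left changes the traversal only by putting $m$ in position $i$. This holds because $m$ has the lowest priority and becomes the only available vertex exactly after $i-1$ steps.

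You never peel the chain. Instead you show directly, by induction along the search, that the search on $F_C(P)$ visits the vertices of $P$ in the natural order $0,1,\dots,n$. The engine of your argument is the inequality $s(x)<s(w)$ whenever $\mathrm{parent}(w)<x<w$ (equivalently $s_P(w)\le x<w$), which you read off from the interval structure of $P_{s(w)-1}$. The one tacit step is that a non-root of $P_{s(w)-1}$ is already joined to its $P$-parent. This is immediate, because $E(P_{s(w)-1})\subseteq E(P)$ and in an increasing forest the parent is the unique smaller neighbour.

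The paper's induction gives a dynamic picture: each cover with label $i$ places the newest vertex at position $i$. This runs in parallel with the proof behind Theorem~\ref{thm:maximal_chains_parking_functions}, where car $m$ parks at spot $i$. Your version gives a global, non-recursive description and needs no bookkeeping about when $m$ becomes available. It also isolates the constraint on Jordan--H\"older words that drives both bijections. The same inequality is what guarantees, in the parking-function lemma, that spots $[s_P(i),i-1]$ are filled before car $m$ arrives. It is also the relation underlying the poset $<_P$ used in the hook-length count.
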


\begin{proof}
Note that if $\lambda(C)^{-1}(P)$ 
has priority traversal $\lambda(C)^{-1}$, then $\lambda(C)^{-1}(P)$ has priority forest $P$ by definition.
We prove that $F_C(P)$ has priority traversal $\lambda(C)^{-1}$ by induction on the rank of $P$. 
If $\rank(P) = 0$, both $\lambda(C)^{-1}$ and 
    the priority traversal of $\lambda^{-1}(C)(P)$ are the partial permutation with empty domain.
    Suppose that the result holds for all priority forests of rank $m - 1$, and suppose that $P$ has rank $m$. Let $C'$ be the chain obtained from $C$ removing its last element, and let $P'$ be the top element of $C'$. Let $\tau$ and $\tau'$ denote the priority traversals of $\lambda(C)^{-1}(P)$ and $\lambda(C')^{-1}(P')$, respectively.
    By induction hypothesis, $\tau' = \lambda(C')^{-1}$.

    The Jordan-Hölder permutations of $C$ and $C'$ coincide everywhere except at $m$, as $\lambda(C')(m)$ is undefined, whereas $\lambda(C)(m) = i$ for certain $i$ in $[n]$. 
    On the other hand, $\lambda(C)^{-1}(P)$ can be obtained from $\lambda(C')^{-1}(P')$ by
    relabeling the root 
    that is found at the $i$-th step of the traversal with the label $m$, and determining a parent for $m$  in the tree to its left. 
    Furthermore, we have 
    $
    \tau = \tau'(1) \tau'(2) \cdots \tau'(i-1) \ m \ \tau'(i+1) \cdots \tau'(n). 
    $
    To see this, note that $\tau$ coincides with $\tau'$ up to the moment when vertex $m$ is visited. Indeed, at each earlier step, the unblocked vertices in $\lambda(C)^{-1}(P)$ coincide with those on $\lambda(C')^{-1}(P')$, possibly with the addition of $m$.  However, since $m$ has the lowest priority, it is never selected as long as other candidates are available.  After $i-1$ steps, the only unvisited vertices on $\lambda(C)(P)$ are $m$, its descendants, and those in trees to the right of $m$. At the $i$-th step, vertex $m$ is visited. From here, priority search on $\lambda(C)(P)$ proceeds exactly as on $\lambda(C')(P')$, as at every step the set of unblocked vertices of both forests coincide (this time without exceptions).

    Combining this with $\tau' = \lambda(C')^{-1}$, we conclude that  $\tau = \lambda(C)^{-1}$, as desired.
    Uniqueness is immediate, as an ordered forest is uniquely determined by its priority forest together with its priority traversal.
    \qedhere
\end{proof}

The \emph{principal ideal}  of a priority forest $P$   is defined as $I_P = \{P' \in \Pi(n)  \ | \ P' \le P \}$. We do not consider $\Pi(n)$ to be a principal ideal.

\begin{lem}
\label{lem:priority traversal => Jordan Holder}
    Let $F$ be an ordered  with priority forest $P$ and priority traversal $\tau$. Then, $\tau^{-1}$ is the Jordan-Hölder permutation of a unique maximal chain of $I_P=[\hat 0, P]$.
\end{lem}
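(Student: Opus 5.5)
We need to show: given an ordered $(m,n)$-forest $F$ with priority forest $P$ and priority traversal $\tau$, there is exactly one maximal chain $C$ of $[\hat 0,P]$ with $\lambda(C)=\tau^{-1}$. The plan is to build $C$ directly from $\tau$ and obtain uniqueness from the fact that labels determine edges.

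\textbf{Construction.} The partial permutation $\tau^{-1}$ sends each label $k\in[m]$ of $F$ to the position $\tau^{-1}(k)\in[n]$ at which $k$ is visited. Under the relabeling to $P$, that position is exactly the vertex of $P$ corresponding to $k$. Each non-root vertex $v$ of $P$ has a unique parent edge $e_v$, and since component trees of $P$ are increasing, $v$ is the larger endpoint of $e_v$. For $j=0,1,\dots,m$, define $P_j$ to be the forest on $[n]_0$ with edge set $\{e_{\tau^{-1}(1)},\dots,e_{\tau^{-1}(j)}\}$. Then $P_0=\hat 0$ and $P_m=P$.

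\textbf{The $P_j$ are priority forests.} Every subforest of $P$ obtained by deleting edges has increasing components. The substantive requirement is that each component's vertex set is an interval of integers, ordered left to right. This is where the order in which $F$ assigns labels matters, and I expect it to be the main obstacle.

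I would argue by induction in the style of Lemma~\ref{lem: Jordan Holder => priority traversal}, removing the largest label. Let $i=\tau^{-1}(m)$ and delete the vertex labeled $m$ from the tree of $F$ containing it. Declare $m$'s position a new root: place the subtree of $m$ as its own component immediately after the tree it came from, and unlabel $m$. This gives an ordered $(m-1,n)$-forest $F'$.

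The key claim is that the priority traversal of $F'$ is $\tau$ with $m$ replaced by $-$. The reason is the one used in Lemma~\ref{lem: Jordan Holder => priority traversal}:
\begin{itemize}
\item $m$ has the lowest priority, so it is visited only once everything else available in its tree has been exhausted;
\item at that moment, the unvisited vertices of that tree are exactly $m$ and its descendants;
\item hence cutting off this subtree and visiting it as the next component tree does not change the visiting order.
\end{itemize}
It follows that the priority forest of $F'$ is $P$ with the edge $e_i$ removed, and in particular that it is a priority forest. Induction then yields that $P_{m-1}$ and all earlier $P_j$ are priority forests. The details to check carefully are that the cut subtree's vertices form the block of consecutive positions $i,i+1,\dots$ up to the end of the original tree, and that every vertex before $i$ stays in the left part.

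\textbf{Chain, labels, and uniqueness.} Each step $P_{j-1}\lessdot P_j$ adds one edge, so it is a cover relation. The added edge $e_{\tau^{-1}(j)}$ has larger endpoint $\tau^{-1}(j)$, so its label is $\tau^{-1}(j)$. The chain has length $m=\rho(P)$, so it is maximal in $[\hat 0,P]$, and $\lambda(C)=\tau^{-1}$.

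For uniqueness, note that a label $v$ in a chain inside $[\hat 0,P]$ forces the added edge to be the edge of $P$ with larger endpoint $v$, which is $e_v$. So the sequence of labels determines every $P_j$. Alternatively, uniqueness follows from Lemma~\ref{lem: Jordan Holder => priority traversal} together with the fact that edges are recovered from their labels.
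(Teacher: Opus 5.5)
Your proof is correct. The construction of the chain and the uniqueness argument match the paper's, but you prove the key step (each $P_j$ is a priority forest) by a different route.

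The paper stays inside the fixed forest $F$. By downward induction on $i$, it takes the tree of $P_i$ containing $\tau^{-1}(i)$, with vertex set $[a,b]$. It shows directly that deleting the parent edge of $\tau^{-1}(i)$ splits this tree into $[a,\tau^{-1}(i)-1]$ and $[\tau^{-1}(i),b]$. The argument is by contradiction: a vertex $\tau^{-1}(j)>\tau^{-1}(i)$ with $j<i$ on the wrong side would need a blocking ancestor with label larger than $i$, and that ancestor's parent edge is missing from $P_i$ and separates the two vertices.

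You instead modify $F$. Detaching the subtree of the largest label $m$ as a new component yields an ordered $(m-1,n)$-forest $F'$ whose traversal is $\tau$ with $m$ replaced by $-$. This is exactly the converse of the argument in Lemma~\ref{lem: Jordan Holder => priority traversal}. So $P_{m-1}$ is a priority forest simply because it is the priority forest of $F'$, and induction on $m$ finishes. State explicitly that the chain attached to $F'$ is $P_0,\dots,P_{m-1}$: this holds because $\tau'^{-1}$ agrees with $\tau^{-1}$ on $[m-1]$ and no other parent edge changes.

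Your route makes the two lemmas visibly inverse constructions. It also gives extra information: each $P_j$ is the priority forest of an explicit ordered $(j,n)$-forest whose traversal is $\tau$ with all labels larger than $j$ replaced by $-$. This is the forest counterpart of restricting a parking function to its first $j$ cars. The paper's route avoids auxiliary forests and identifies the two vertex intervals directly.

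You leave one detail implicit: when $m$ is visited, the unvisited vertices of its tree are exactly $m$ and its descendants. This holds because the topmost unvisited ancestor of any unvisited vertex is unblocked, and $m$ is the only unblocked vertex at that moment.
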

\begin{proof}
     Set $m$ to be the rank of $P$. For each $i$ in  $[m]$, let $P_i$ be the forest whose edges are those joining  $\tau^{-1}(j)$ to its parent in $P$ for $j \leq i$.
   We claim that the sequence of forests 
    $
    C :\hat 0 \subseteq P_1 \subseteq \cdots \subseteq P_{m} = P
    $
    is a maximal chain of $[\hat 0, P]$. The length of $C$ is $m$, and inclusions are immediate from the construction. It therefore remains to show that each $P_i$ is a priority forest.
       We proceed by downward induction on $i$.
       
       The case $i = m$ is trivial, as $P_m = P$ is a priority forest by assumption. Suppose  that $P_{i}$ is a priority forest and consider $P_{i - 1}$.
    The forest $P_{i-1}$ is obtained from $P_{i}$ by deleting the edge $e$ joining $\tau^{-1}(i)$ to its parent. Let $T$ be the tree of $P_{i}$ containing $\tau^{-1}(i)$, and suppose that its vertex set is the interval $[a, b]$.
    Removing $e$ splits $T$ in two trees: a tree $T_1$ containing all descendants of $\tau^{-1}(i)$, and a tree $T_2$, containing the remaining vertices. Every tree of $P_{i - 1}$ besides $T_1$ and $T_2$ is also a tree of $P_{i}$, and hence has an interval vertex set. 
    We claim that the vertex sets of $T_1$ and $T_2$ are, respectively,  
    $[\tau^{-1}(i), b]$ and $[a, \tau^{-1}(i) - 1].$
    This will show that $P_{i-1}$ is a priority forest. 

    Since $T$ is increasing, every vertex in $T_1$ is greater than or equal to  $\tau^{-1}(i)$ (the root of $T_1$) and it is left to show that every vertex of $T_2$ is less than $\tau^{-1}(i)$. Suppose otherwise, and let $x$ be a vertex of $T_2$ with $x > \tau^{-1}(i)$. By definition, the roots of $P_{i}$ are the vertices $\tau^{-1}(k)$ for $k > i$. Note that $x$ is not the root of $T$, as $x > \tau^{-1}(i)$. Therefore, $x$ equals $\tau^{-1}(j)$ for some $j < i$. 
    Since $\tau^{-1}(j) > \tau^{-1}(i)$, vertex $j$ is visited after $i$ in the priority traversal of $F$, despite $j$ having higher priority.  Thus, $j$ must have an ancestor $a > i$ that is visited after $i$. This is, such that $\tau^{-1}(a) > \tau^{-1}(i)$. This implies that $\tau^{-1}(a)$ is a root of $P_{i}$, and hence the edge of $F$ joining $\tau^{-1}(a)$ to its parent does not belong to $P_{i}$. However, this edge needs to be crossed to move from $\tau^{-1}(j)$ to $\tau^{-1}(i)$ in $P$, since $\tau^{-1}(a)$ is an ancestor of $\tau^{-1}(j)$ greater than $\tau^{-1}(i)$ and $P$ is increasing.
    It follows that that $x = \tau^{-1}(j)$ and $\tau^{-1}(i)$ do not belong to the same tree of $P_{i}$, as there is no path between them. This contradicts our choice of $x$.

    Finally, the Jordan-Hölder permutation of the chain $C$ is $\tau^{-1}$ by construction. Uniqueness follows from the fact that a maximal chain of $[\hat 0, P]$ is completely determined by its Jordan-Hölder permutation. \qedhere
\end{proof}

We are ready to present the main result of this section.
   Define a \emph{complete $m$-chain} as a saturated chain starting at $\hat0$ of length $m.$  
\begin{thm} 
\label{thm:maximal_chains_forests}
Fix $m\le n$. There is a unique bijection $\phi_m$ between the set of complete $m$-chains of $\Pi(n)$ and the set of ordered $(m,n)$-forests that sends a chain $C$ with top element $P$ and Jordan-Hölder permutation $\lambda$ to a ordered forest with priority forest $P$ and priority traversal $\lambda^{-1}$. The bijection $\phi_m$ is explicitly defined as
\[
\phi_m(C) = \lambda^{-1}(P).
\]
\end{thm}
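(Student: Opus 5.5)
The plan is to assemble the theorem from Lemma~\ref{lem: Jordan Holder => priority traversal} and Lemma~\ref{lem:priority traversal => Jordan Holder}, using the fact that an ordered forest is determined by its priority forest together with its priority traversal.

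First I will check that $\phi_m$ is well defined with the right codomain. A complete $m$-chain $C$ has top element $P$ with $\rho(P)=m$, so $P$ is a priority forest with $m$ edges on the vertex set $[n]_0$. Because $C$ starts at $\hat 0$ and is saturated, it is a maximal chain of $[\hat 0,P]$. Lemma~\ref{lem: Jordan Holder => priority traversal} then says that $\phi_m(C)=\lambda^{-1}(P)=F_C(P)$ is an ordered forest with priority forest $P$ and priority traversal $\lambda^{-1}$. Relabelling the non-roots of $P$ by $\lambda^{-1}$ uses the labels $[m]$ and leaves $n-m+1$ roots, so this is an ordered $(m,n)$-forest. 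The same lemma also gives the required property of the image.

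Next, injectivity. Suppose $\phi_m(C)=\phi_m(C')$. Then the two chains have the same top element $P$, since $P$ is the priority forest of the image. They also have the same Jordan--H\"older permutation, since $\lambda^{-1}$ is the priority traversal of the image. A maximal chain of $[\hat 0,P]$ is determined by its Jordan--H\"older permutation: the $i$-th element is $P$ restricted to the edges whose larger endpoints are $\lambda(1),\dots,\lambda(i)$. Hence $C=C'$.

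Then surjectivity. Take an ordered $(m,n)$-forest $F$ with priority forest $P$ and priority traversal $\tau$. By Lemma~\ref{lem:priority traversal => Jordan Holder}, $\tau^{-1}$ is the Jordan--H\"older permutation of a maximal chain $C$ of $[\hat 0,P]$, and $C$ is a complete $m$-chain. Then $\phi_m(C)=\tau(P)$, which is the unique ordered forest with priority forest $P$ and traversal $\tau$, namely $F$.

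Finally, uniqueness of the bijection. Any map with the stated property must send $C$ to an ordered forest whose priority forest is $P$ and whose priority traversal is $\lambda^{-1}$. Such a forest is unique, because relabelling $P$ by the traversal recovers it. So any such map coincides with $\phi_m$.

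The step needing the most care is the bookkeeping at the roots. The permutation $\lambda$ is only partial: it records steps where a root is visited with a ``$-$''. I need $\lambda^{-1}(P)$ to label the roots exactly as $\circ,\circ_1,\dots$ in left-to-right order, so that the priority traversal matches including the root positions. This holds because the trees of a priority forest are ordered by their intervals, and that is the convention used in Lemma~\ref{lem: Jordan Holder => priority traversal}.
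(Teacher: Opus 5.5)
Your proof is correct and follows essentially the same argument as the paper. The paper defines an inverse map $\overline\phi_m(F)$ as the unique maximal chain of $[\hat 0,P]$ with Jordan--H\"older permutation $\tau^{-1}$, which exists by the lemma turning priority traversals into chains. It then uses the lemma turning Jordan--H\"older permutations into priority traversals to check that $\phi_m$ and $\overline\phi_m$ are mutually inverse; your injectivity/surjectivity split is the same argument. Your checks of the codomain, of the ``unique bijection'' clause, and that a chain of $[\hat 0,P]$ is determined by its Jordan--H\"older permutation spell out details the paper leaves implicit.
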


\begin{proof}
Given a ordered $(m,n)$-forest $F$ with priority forest $P$ and priority traversal $\tau$, define $\overline\phi_m(F)$ to be the unique maximal chain of $I_P=[\hat 0, P]$ with edge-labelling $\tau^{-1}$. The existence of this maximal chain is guaranteed by Lemma \ref{lem:priority traversal => Jordan Holder}. Note that $\overline\phi_m(F)$ is a complete $m$-chain of $\Pi(n)$ with top element $P$. By Lemma \ref{lem: Jordan Holder => priority traversal}, the maps $\phi_m$ and $\overline\phi_m$ are inverses of each other, and hence $\phi_m$ is a bijection.
\end{proof}

 \begin{figure}[h!]
    \centering
    \input{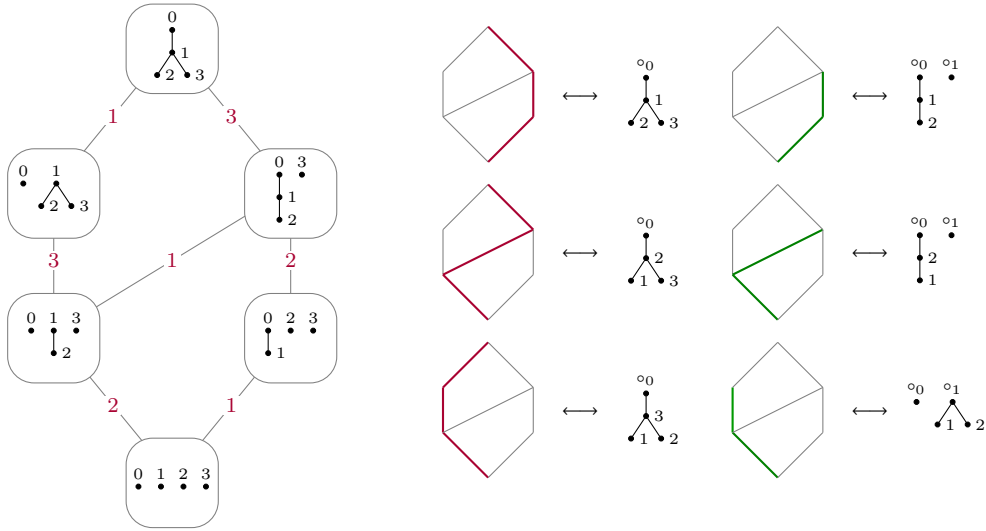}
      \caption{An interval $[\hat{0}, T]$, its maximal chains and corresponding rooted trees, and  some additional examples at the forest level. }
    \label{fig:Interval_Pi3}
\end{figure} 

As a result, for fixed  $m \le n$. The number of complete $m$-chains of $\Pi(n)$ is given by Cayley's forest formula, $(m-n+1)(n+1)^{m-1}$, and  maximal chains of $\Pi(n)$ are enumerated by Cayley's tree formula, $(n+1)^{n-1}$.
In Figure \ref{fig:Interval_Pi3} we present an example of this bijection for trees.
The three maximal chains with $T$ as top element are illustrated at its right, together with the rooted tree obtained by acting on $T$  with the inverse of the respective Jordan–Hölder permutation. Observe that if we read the edge labels of the maximal chains of $[\hat0, T]$, from bottom to top, 2 always appears before 3.


\section{Partial parking functions.}
\label{Part:parking_functions}

 Consider $n$ cars arriving in order to a street with  $n$ parking slots, in which each car has a preferred spot $a_i$. When Car $i$ enters the street, it first attempts to park at spot $a_i$. If that spot is unoccupied, it parks; otherwise, it proceeds forward and parks at the next available free spot. The sequence of preferences $\pi = (a_1, a_2,\ldots, a_n)$ is a \emph{parking function} of length $n$ when all cars succeed to park. Otherwise, we refer to it as a failed preference sequence.
Parking functions first appeared in the study of linear probes
of random hashing functions in the work of Konheim and  Weiss,  \cite{KW}. There, the authors famously showed that  there are 
$
(n+1)^{n-1}
$
parking functions of length $n$.
After Cayley \cite{Cayley}, we know that this number also counts the number of  trees of order $n+1.$  

  Konheim and Weiss did not assume that the number of cars and the number of parking spots were equal until the  very last section of their work. Parking functions with fewer cars than parking slots will be one of the central themes of this work.
    An \emph{$(m,n)$-parking function} is a preference sequence $\pi : [m] \to [n]$ where cars labeled with $[m]$ arrive in increasing order to a street with $n$ parking slots, labeled with $[n]$. 
  When car $i$ arrives, it attempts to park at spot $\pi(i)$. If the spot is empty, it parks there; otherwise, it continues driving until it finds an empty parking spot and parks there. We say that $\pi$ is an $(m,n)$-parking function if all $m$ cars are successfully able to park in the $n$ spots. 
  This immediately implies that $m\le n$.
  
  We also refer to $(m,n)$-parking functions as \emph{partial parking functions}.  Note that when the number of cars coincides with the number of spots, we recover the usual definition of parking functions. 
  Partial parking functions are called occupancy disciplines in \cite{KW}, and generalized parking functions in \cite{KenyonYin2023}.   
 
Let $O(\pi)$ be the set of occupied spots under $\pi$. 
The \emph{bird's eye permutation} of $\pi$ is the partial permutation  $\omega_\pi :  [n] \to [n]$ that sends a spot $s_i\in O(\pi)$ into $\omega_\pi(s_i)$, the label of the car occupying $s_i$, and leaves undefined the image of all $s_i\not\in O(\pi)$.
When written in one-line notation, the bird's eye permutation gives the bird's eye picture of the parking lot after the parking process defined by $\pi$ has ended, with ``$-$" standing for empty spots. The (partial) inverse permutation $\omega_\pi^{-1}$ is referred to as the \emph{outcome permutation} of $\pi$.
 
 \begin{ex} 
\label{ex:ppf}
Consider the parking function $\pi = (2, 4, 2, 1, 3)$.
Let $\pi^k$ be the $(5,k)$-parking function obtained by restricting
  $\pi$ to the first $k$ cars, and let $\omega^k$ be the partial bird's eye permutation of $\pi^k$.
The restriction of $\pi$ to $\emptyset$ is the empty partial parking function $\pi^0$, so no cars park. The restriction $\pi^1=(2)$ shows that Car~1 parks at spot~2, while $\pi^2=(2,4)$ indicates that Car~2 parks at spot~4. For $\pi^3=(2,4,2)$, Car~3 prefers spot~2 but finds it occupied and therefore parks at spot~3, the next available position. The restriction $\pi^4=(2,4,2,1)$ shows that Car~4 parks at spot~1. Finally, $\pi=(2,4,2,1,3)$ is a parking function, and Car~5, finding spot~3 occupied, parks at spot~5.
 The resulting sequence of bird's eye permutations of this sequence  is
  \begin{multline*}
  \omega^0=  \begin{pmatrix}
1 & 2 & 3 & 4 & 5\\
- & - & - & - & -
\end{pmatrix} \qquad
 \omega^1 = \begin{pmatrix}
1 & 2 & 3 & 4 & 5\\
- & 1 & - & - & -
\end{pmatrix}\qquad
\omega^2=
\begin{pmatrix}
1 & 2 & 3 & 4 & 5\\
- & 1 & - & 2 & -
\end{pmatrix} \\
\omega^3=
\begin{pmatrix}
1 & 2 & 3 & 4 & 5\\
- & 1 & 3 & 2 & -
\end{pmatrix}\qquad
\omega^4=
\begin{pmatrix}
1 & 2 & 3 & 4 & 5\\
4 & 1 & 3 & 2 & -
\end{pmatrix} \qquad
\omega^5=
\begin{pmatrix}
1 & 2 & 3 & 4 & 5\\
4 & 1 & 3 & 2 & 5
\end{pmatrix}
\end{multline*}
 \end{ex}

     A useful way of representing parking functions is by the use of  parking blueprints, \cite{LRT-Weary}. We extend them to partial parking functions. Let $\pi$ be a $(m,n)$-parking function, and let $\omega =\omega_\pi$ be its bird's eye permutation. 
The \emph{parking blueprint} of $\pi$ has $n$ consecutive segments representing the parking spaces. For each $ i$ in $[m]$, as Car $i$ parks at space $\omega(i)$, we draw Car $i$ on top of segment $ \omega(i)$. Additionally, we draw an arc connecting $\pi(i)$, the preferred position of Car $i$, to  $\omega(i)$, the position where it parks. We do so taking care that arcs do not intersect. For simplicity, if a car parks in its preferred space, we draw no arc. The central column of Figure \ref{Fig:partial_parking_cars} 
shows the sequence of parking blueprints corresponding to the sequence $\pi^0, \pi^1, \dots, \pi^5$ of Example \ref{ex:ppf}.
\begin{figure}
    \centering
    \resizebox{!}{0.55\textheight}{
    \input{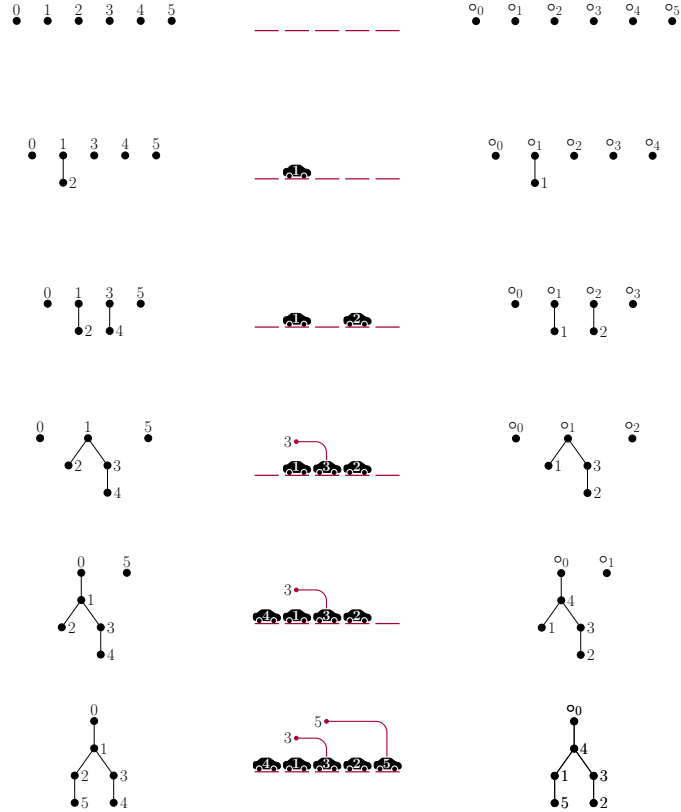}}
    \caption{In the central column, a parking function as a sequence of partial parking functions. In the left column, the corresponding sequence of priority forests. In the right column, the corresponding sequence of ordered forests under the weary bijection.}
\label{Fig:partial_parking_cars}
\end{figure}

Both the partial parking function $\pi$ and its bird's eye permutation $\omega_\pi$ can be easily recovered from the parking blueprint.  Scanning the $n$ segments from left to right yields $\omega_\pi$, recording a $-$ for each unoccupied space. To reconstruct $\pi$, one reads the arcs in reverse: if an arc terminates at Car $i$, its starting point indicates the preferred space $\pi(i)$. If no arc terminates there, Car $i$ is parked at its preferred space.

\subsection{The priority forest of a partial parking function} We define the priority tree of a partial parking function, and use this notion to describe the maximal chains of the priority lattice and of its proper intervals in the language of partial parking functions.

Let $P$ be a priority forest. The partial map sending each non-root node of $P$ to its parent, and leaving the image of roots undefined is known as the \emph{parent map}. It is a partial function $p : [n] \to [n - 1]_0$, as vertex $n$ has no children.  

The \emph{shifted parent map} of $P$, denoted by $s_P$, is the partial function $s_P:[n] \to [n]$ given by $s_P(i) = p(i) + 1$ whenever $p(i)$ is defined.
A shifted parent map verifies $s_P(i) \leq i$ for every $i$ in its domain because priority forests are increasing. When $P$ is a tree, shifted parent maps are subexceedant functions, see \cite{LRT-Weary}. 
When a forest is not a tree, the domain of its shifted parent map is strictly contained in $[n]$. In general, however, it need not be $[m]$ for some $m < n$. Thus, shifted parent maps are not always partial parking functions. Nevertheless, every partial parking function induces a shifted parent map, as shown by the following lemma.

\begin{lem}
\label{lem:shifted_parent_map}
    Let $\pi$ be an $(m, n)$-parking function with bird's-eye permutation $\omega$. Then $\pi \circ \omega$ is the shifted parent map of a priority  forest with $m$ edges.
 \end{lem}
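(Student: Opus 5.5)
The plan is to make the forest explicit and check the defining properties of a priority forest one at a time. The function $f=\pi\circ\omega$ is defined exactly on the set $O(\pi)$ of occupied spots. For $s\in O(\pi)$, $f(s)$ is the preferred spot of the car parked at $s$. Set $p(s)=f(s)-1$ and let $P$ be the graph on $[n]_0$ with edge set $\{\{p(s),s\} : s\in O(\pi)\}$. By construction $f$ is the shifted parent map of $P$, provided we show that $P$ is a priority forest whose roots are exactly the vertices outside $O(\pi)$, namely $0$ and the empty spots.

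\textbf{Step 1: $P$ is an increasing forest.} A car never parks before its preferred spot, so $f(s)\le s$ and hence $p(s)<s$. Every vertex $s\in O(\pi)$ therefore has a unique neighbour smaller than itself, and every other vertex has none. Following parent pointers strictly decreases labels, so there are no cycles. Hence $P$ is a rooted forest in which the roots are $0$ together with the unoccupied spots, and every tree is increasing. The number of edges is $|O(\pi)|=m$, since all $m$ cars park in distinct spots.

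\textbf{Step 2: the vertex sets of the trees are intervals.} I expect this to be the only real obstacle. The key claim is:
\begin{quote}
for every occupied spot $s$, the vertices $s$ and $s-1$ lie in the same tree of $P$.
\end{quote}
I will prove this by strong induction on $s$. If $p(s)=s-1$ there is nothing to prove. Otherwise $f(s)\le s-1$. When the car now parked at $s$ arrived, it found each of the spots $f(s),f(s)+1,\dots,s-1$ occupied, so all of them lie in $O(\pi)$. Applying the induction hypothesis to each $t\in[f(s),s-1]$ shows that $t$ and $t-1$ lie in the same tree. Chaining these, $s-1$ lies in the same tree as $f(s)-1=p(s)$, and $p(s)$ is adjacent to $s$. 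This proves the claim.

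It follows that the trees of $P$ are exactly the maximal intervals $[r,r']$ in which $r$ is $0$ or an unoccupied spot and $r+1,\dots,r'$ are occupied. In each such interval the root $r$ is the minimum, since the trees are increasing. Ordering the trees by their roots, every label in an earlier tree is smaller than every label in a later one. Hence $P$ is a priority forest with $m$ edges, and its shifted parent map is $s_P=\pi\circ\omega$, as claimed.
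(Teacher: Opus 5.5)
Your proof is correct, and your Step 1 matches the paper's first step: preferences never exceed final spots, so $\pi\circ\omega$ defines an increasing forest with $m$ edges. For the interval property you take a different route.

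The paper argues by contradiction. It chooses the smallest vertex $x$ exceeding the root $r$ of some later tree, and uses minimality to force the parent of $x$ below $r$. It concludes that the car parked at $x$, whose preference is at most $r$, drove past the spot $r$, which is empty.

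You instead prove directly, by strong induction, that every occupied spot $s$ lies in the same tree as $s-1$. You chain through the spots of $[f(s),s-1]$, which were all occupied when the car now at $s$ arrived.

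Both arguments rest on the same parking fact: every spot from a car's preference up to, but not including, its final spot is already occupied when it arrives. The difference is one of packaging. The paper's contradiction is shorter because it only has to exclude a single violation. Your constructive version yields more: it describes the components of $P$ explicitly as the intervals formed by an empty spot (or $0$) followed by a maximal cluster of parked cars. That is exactly the cluster and entry-point picture the paper later uses for the forest weary parking process.
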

\begin{proof} Fix $i$ in $ [n]$, and assume that there is a car parked at spot $i$.  By definition, this car has preference $\pi\circ \omega(i)$, and $\pi \circ \omega(i) \leq i$, as the preference of a car never exceeds its final parking spot. Thus, $\pi \circ \omega$ is the shifted parent map of an increasing forest $P$. 
    
    Suppose that $P$ is not a priority forest.
     Then there exists a vertex $x$ that is larger than the root $r$ of some tree to its right. Choose $x$ minimal with this property. By the minimality of $x$, its parent must be strictly less than $r$.
     Thus, we have $\pi \circ \omega (x) \leq r$, and $\pi \circ \omega (r)$ is undefined. This implies that car $\omega(x)$ drives past spot $r$ while it is empty, despite its preference being less than or equal to $r$. Contradiction. Thus, $P$ is a priority forest. 
\end{proof}

The \emph{priority forest} of an $(m, n)$-parking function $\pi$ is defined as the priority forest described in the statement of Lemma \ref{lem:shifted_parent_map}. The shifted parent map of the priority forest of $\pi$ can also be read from the parking blueprint: the starting point of an arc ending at spot $i$ gives $s_P(i)$, and an occupied spot $i$ with no incoming arc implies $s_P(i) = i$. If spot $i$ is unoccupied, then $s_P(i)$ is undefined, and hence $i$ is a root of $P$. 

\begin{lem}
\label{lem:bird's eye => Jordan Holder}Let $C$ be a maximal chain of $[\hat 0, P]$ with Jordan-Hölder permutation $\lambda$. Then, $s_P \circ \lambda $ is a partial parking function with priority forest $P$ and bird's eye permutation $\lambda^{-1}$. 
\end{lem}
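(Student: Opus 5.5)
Set $\pi = s_P\circ\lambda$. The plan is to induct on $m=\rank(P)$, mirroring the proof of Lemma \ref{lem: Jordan Holder => priority traversal}. We show that $\pi$ is an $(m,n)$-parking function in which car $j$ parks at spot $\lambda(j)$. Once that is known, the bird's eye permutation sends spot $\lambda(j)$ to car $j$, so $\omega_\pi=\lambda^{-1}$. Then $\pi\circ\omega_\pi = s_P\circ\lambda\circ\lambda^{-1}=s_P$, and Lemma \ref{lem:shifted_parent_map} identifies $P$ as the priority forest of $\pi$. The domain is right because $\lambda$ is a partial permutation with domain $[m]$ and image the non-root vertices of $P$, which is exactly the domain of $s_P$. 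So the whole content is the parking claim.

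For the induction, the case $m=0$ is trivial. Let $C'$ be $C$ with $P$ removed, with top $P'$ and labels $\lambda'=\lambda|_{[m-1]}$. Let $i=\lambda(m)$, so $P$ is obtained from $P'$ by attaching the root $i$ to its parent $p(i)<i$. The first $m-1$ cars have preferences $s_P(\lambda(j))=s_{P'}(\lambda(j))$, since the parent of every vertex other than $i$ is the same in $P$ and $P'$. By induction they park at $\lambda(1),\dots,\lambda(m-1)$, occupying exactly the non-roots of $P'$. Car $m$ prefers $p(i)+1$, and it remains to show it parks at $i$.

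This is the key step. In $P'$ the vertex $i$ is a root, and since $P$ is a priority forest, $P'$ has a tree $T$ with vertex set an interval $[a,i-1]$ immediately to the left of $i$. The vertex $p(i)$ lies in that tree, because in $P$ the tree containing $i$ is an interval and merging happens with the tree just to its left. Hence $a\le p(i)\le i-1$, and every spot in $[p(i)+1,i-1]$ is a non-root vertex of $T$, which is occupied. Spot $i$ itself is a root of $P'$, hence empty. So car $m$, starting at $p(i)+1\le i$, drives through occupied spots and parks at $i$. The edge case $p(i)+1=i$ is immediate.

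The main obstacle is justifying that $p(i)$ lies in the tree immediately preceding $i$ in $P'$. This follows from the interval property: in $P$, the tree containing $i$ has interval vertex set, and removing the edge at $i$ leaves the part $[a,i-1]$ as an interval tree, as in the proof of Lemma \ref{lem:priority traversal => Jordan Holder}.
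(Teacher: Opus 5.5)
Your proof is correct and follows essentially the same route as the paper's: induct on the rank via the truncated chain $C'$, show that the spots in $[p(i)+1,i-1]$ are already occupied and spot $i$ is empty because the tree of $P$ containing $p(i)$ and $i$ has an interval vertex set, and then use $\pi\circ\omega_\pi=s_P$ to identify the priority forest. The only difference is cosmetic. The paper argues the key step by contradiction: an empty spot strictly between $p(i)$ and $i$ would be a root of $P$ lying inside that interval. You instead argue directly, by locating $p(i)$ in the tree $[a,i-1]$ of $P'$.
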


\begin{proof}
    We claim that, if $s_P \circ \lambda$ has bird's eye permutation $\lambda^{-1}$, then  $s_P \circ \lambda$ has priority forest $P$. Indeed,  $(s_P \circ \lambda) \circ \lambda^{-1} = s_P \circ \epsilon_P = s_P$, where $\epsilon_P$ denotes the identity permutation of the non-roots of $P$. 
We prove that $s_P \circ \lambda$ has bird's eye permutation $\lambda^{-1}$ by induction on the rank of $P$. 

If $\rank(P) = 0$, the result is immediate. Suppose that the result holds for all priority forests of rank $m - 1$, and suppose that $P$ has rank $m$. Let $C'$ be the chain obtained from $C$ removing its last element, and let $P'$ be the top element of $C'$. Let $\omega$ and $\omega'$ denote the bird's eye permutations of $s_P \circ \lambda(C)$ and $s_P' \circ \lambda(C')$, respectively.
    By induction hypothesis, $\omega' = \lambda(C')^{-1}$.
    Let $i = \lambda(C)(m)$. 
    Note that $s_P \circ\lambda(C)$ can be obtained from $s_P' \circ\lambda(C')$ by setting the preference of car $m$ to be equal to $s_P(i)$. Furthermore, we claim that $\omega = \omega'(1) \cdots \omega'(i-1) m \omega'(i+1) \cdots \omega'(n)$. The parking procedure is identical for the first $m-1$ cars of $s_P \circ \lambda(C)$ and $s_{P'} \circ \lambda(C')$, so we only need to show that the parking spots in the interval $[s_P(i), i - 1]$ are occupied when car $m$ attempts to park.
    For contradiction, suppose that a space $x$ in the interval $[s_P(i), i - 1]$ is unoccupied. Then $\lambda(C')^{-1}(x)$ is undefined, and hence $x$ is a root of $P'$. The only root of $P'$ that is not a root of $P$ is $i$, so $x$ is also a root of $P$. Since $s_P(i) \leq x < i$, this contradicts the fact that $P$ is a priority forest, as the tree containing $i$ and its parent $s_P(i) - 1$ does not have an interval vertex set.
    Equality $\omega = \lambda(C)^{-1}$ then follows by the induction hypothesis.
\end{proof}

Since a parking function is determined by its priority forest and its bird's eye permutation, the map given by the above lemma is injective. On the other hand, as the domain and codomain have the same cardinality, we have defined a bijection. The inverse map sends a parking function $\pi$ to the unique chain with top element the priority forest of $\pi$, and with Jordan-Hölder permutation $\omega^{-1}_\pi$.
\begin{thm} 
\label{thm:maximal_chains_parking_functions}
Fix $m$ in $[n]$. There is a unique bijection $\psi_m$ between the set of complete $m$-chains of $\Pi(n)$ and the set of $(m,n)$-parking functions that sends a chain $C$ with top element $P$ and Jordan-Hölder permutation $\lambda$ to a partial parking function  
with priority forest $P$ and bird's eye permutation $\lambda^{-1}$. The bijection $\psi_m$ is defined as
\[
\psi_m(C) = s_P \circ \lambda,
\]
where $s_P$ is the shifted parent map of $P$.   
\end{thm}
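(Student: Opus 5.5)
The plan is to make $\psi_m$ well defined by Lemma~\ref{lem:bird's eye => Jordan Holder}, to construct an explicit inverse map, and then to argue uniqueness.

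\textbf{Well-definedness.} Let $C$ be a complete $m$-chain with top element $P$. Then $C$ is a maximal chain of $[\hat 0,P]$ and $P$ has rank $m$. By Lemma~\ref{lem:bird's eye => Jordan Holder}, $s_P\circ\lambda$ is a partial parking function with priority forest $P$ and bird's eye permutation $\lambda^{-1}$. Its domain is the domain of $\lambda$, which is $[m]$, so it is an $(m,n)$-parking function. Hence $\psi_m$ takes values in the right set and has the prescribed properties.

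\textbf{Inverse map.} Given an $(m,n)$-parking function $\pi$ with bird's eye permutation $\omega$, let $P$ be its priority forest from Lemma~\ref{lem:shifted_parent_map}; it has $m$ edges and shifted parent map $s_P=\pi\circ\omega$. The natural candidate is the maximal chain of $[\hat 0,P]$ with Jordan--H\"older permutation $\omega^{-1}$. I see two ways to show that this chain exists.
\begin{itemize}
\item \emph{Direct construction.} Mimic Lemma~\ref{lem:priority traversal => Jordan Holder}. Let $P_k$ have the edges joining $\omega(j)$ to its parent for $j\le k$, i.e.\ the spots occupied by the first $k$ cars. Show that $P_k$ is the priority forest of the restriction $\pi^k$ of $\pi$ to its first $k$ cars. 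This holds because a car parking at spot $x$ with preference $s$ sees exactly the same occupied spots in $[s,x-1]$ at every later stage, so the spot's preference is unchanged. Then Lemma~\ref{lem:shifted_parent_map} makes each $P_k$ a priority forest, and consecutive $P_k$ differ by one edge with label $\omega(k)$.
\item \emph{Counting.} Note that $\psi_m$ is injective: a parking function determines its priority forest and bird's eye permutation, hence $P$ and $\lambda$, and a chain is determined by its top element and its Jordan--H\"older permutation. By Theorem~\ref{thm:maximal_chains_forests}, the domain has cardinality equal to the number of ordered $(m,n)$-forests. By Cameron et al.\ \cite{CameronJohannsenPrellbergSchweitzer2009}, the codomain has the same cardinality, $(n-m+1)(n+1)^{m-1}$. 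An injection between equinumerous finite sets is a bijection.
\end{itemize}
I would present the direct construction as the main argument, since it avoids an external enumeration, and mention the counting argument as a check.

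\textbf{Main obstacle.} The delicate step in the direct construction is showing that the shifted parent of spot $\omega(j)$ does not change as more cars arrive. Every spot in the interval $[\pi(j),\omega(j)-1]$ was already occupied when car $j$ parked, and it stays occupied afterwards. So the preference read off at that spot remains $\pi(j)$, and $s_{P_k}$ is the restriction of $s_P$.

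\textbf{Uniqueness.} Any bijection with the stated property must send $C$ to the parking function whose priority forest is $P$ and whose bird's eye permutation is $\lambda^{-1}$. Such a parking function is unique, because $\pi=s_P\circ\omega^{-1}$. This forces the map to equal $\psi_m$.
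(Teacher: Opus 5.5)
Your proposal is correct. Your second bullet, injectivity plus equal cardinalities via Theorem~\ref{thm:maximal_chains_forests} and Cameron et al., is exactly the paper's proof. There, the inverse chain is described only after bijectivity has been inferred from counting.

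Your main route is genuinely different. You exhibit the inverse chain directly as the priority forests of the truncations $\pi^1,\dots,\pi^m$. This is the picture in Figure~\ref{Fig:partial_parking_cars}, which the paper never uses in its proof. The argument goes through, and the step you flag as delicate is easier than you make it:
\begin{itemize}
\item Later cars never move earlier ones, so the bird's eye permutation of $\pi^k$ is the restriction of $\omega$ to the spots of cars $1,\dots,k$.
\item Hence the shifted parent map of the priority forest of $\pi^k$ is literally a restriction of $\pi\circ\omega$. No bookkeeping of occupied spots is needed.
\item Lemma~\ref{lem:shifted_parent_map} then makes each $P_k$ a priority forest, and the edge added at step $k$ has larger endpoint the spot of car $k$.
\item The identity $s_P\circ\omega^{-1}=\pi$ gives surjectivity. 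You still need your injectivity remark, or the preceding lemma on $s_P\circ\lambda$, for the other composition.
\end{itemize}

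What your route buys is a fully bijective, self-contained proof. Combined with Theorem~\ref{thm:maximal_chains_forests}, it turns the forest weary bijection into an independent bijective proof that $(m,n)$-parking functions and ordered $(m,n)$-forests are equinumerous. The count $(n-m+1)(n+1)^{m-1}$ is then recovered from Cayley's forest formula rather than imported. The paper's route is shorter but presupposes that enumeration.

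One notational slip: your identity $s_P=\pi\circ\omega$ forces $\omega$ to send spots to cars, so car $j$ parks at $\omega^{-1}(j)$. Thus ``$\omega(j)$'' in your construction and ``label $\omega(k)$'' should read $\omega^{-1}(j)$ and $\omega^{-1}(k)$. Your stated Jordan--H\"older permutation $\omega^{-1}$ is right.
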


\subsection{The forest weary parking bijection}
\label{se:weary_bijection}

Complete $m$‑chains of the priority lattice are in bijection with both ordered $(m,n)$‑forests and $(m,n)$‑parking functions. This correspondence naturally induces a bijection between these two families, which, in the case $m = n$, recovers the weary parking bijection of \cite{LRT-Weary}.
A different bijection between ordered forests and partial parking functions can be found in \cite{KenyonYin2023}.

We summarize the tree \emph{weary parking} process of \cite{LRT-Weary}.
   Let $T$ be a tree labeled with $[n]_0$ and rooted at $\circ$, that we identify with $0$. We interpret $T$ as a bird's eye view of the relative positions of $n$ cars: the non-root vertices represent the cars, the edges represent streets, and the root $\circ$ represents the entrance to a street with plenty of parking space.
    Labels are used to encode the speed ranking of cars, with Car $1$ being the fastest car, and Car $n$ the slowest. We assume that overtaking is forbidden. At every step of the parking process, the fastest unblocked car reaches the entry point $\circ$ first and proceeds to park at the nearest available spot. This unblocks all its children, and the procedure is iterated until all cars have finally parked. Hence, the order in which cars enter the parking street and park coincides with the priority traversal $\tau$ of $T$. By construction, all cars end up finding a parking space.

The \emph{weary parking bijection} is constructed as follows.
 As overtaking is forbidden, the best a driver can hope for is to park immediately after the car that is blocking it in $T$, so we define this spot to be its preferred spot. Explicitly, given a car $i$ with parent $p$, the \emph{preference} of Car $i$ is defined to be $\pi_T(i) = \tau^{-1}(i) + 1$ (this is, the parking spot of Car $i$, plus one). 
  In \cite{LRT-Weary} it was shown that $\pi_T$ is a parking function and that the map sending a rooted tree $T$ to the parking function $\pi_T$ is a bijection. In particular, it was shown that the bird's eye permutation of $\pi_T$ coincides with the priority traversal of $T$.

On the other hand, cars in partial parking function park in \emph{car clusters},  maximal collections of cars with no empty spots between them. Therefore,  the forest weary parking process must allow for multiple entry points to the parking street. Given an ordered $(m,n)$-forest $F=(T_0, T_1,\dots,T_{n-m})$, we define the \emph{entry points $e_k$}, for each $k$ in $[n-m]_0$, as
\[
e_k = 1 +\sum_{j=0}^{k-1} t_j, 
\]
where $t_j = |T_j|$ denotes the number of nodes of $T_j$ (counting the root).

The \emph{forest weary parking} process defined by an ordered $(m,n)$ forest $F$ involves $m$ cars, $n$ parking spots and $n-m+1$ car clusters, and empty spots $\{e_1 - 1, e_2 - 1, \cdots, e_{k} - 1\}$. Cars enter the street according to a priority search on $F$ and park at the nearest available spot, with the additional restriction that cars in $T_i$ must enter the street at the entry point $e_i$. 
 The forest weary parking process is always successful. When $m = n$, we recover the tree weary parking process. 
Let  $\omega_F$ be be the partial bird's eye permutation of $F$, and let $p_F$ be its partial parent function.
  The \emph{preference function} $\pi_F$ of $F$ is defined as 
\begin{align}
\label{def:preference}
\pi_F(i) = \omega_F^{-1}(p_F(i)) + 1
\end{align}  
if $p_F(i)$ is not  a root. Otherwise, $\pi_F(i)$ is left undefined.  
Observe that for all $F$, we have $\pi_F = \psi_m\circ\phi_m^{-1}(F)$. The “plus one” appearing in the definition of $\pi_F$ (see Eq.~\ref{def:preference}) is a consequence of using the shifted parent map in Theorem \ref{thm:maximal_chains_parking_functions}. In particular, $\pi_F$ is an $(m, n)$-parking function. 
 Let $\mathcal{F}_{m,n}$ be the set of ordered $(m,n)$-forests and let $\mathcal{P}\mathcal{F}_{m,n}$ be the set of $(m,n)$--parking functions.
We define the \emph{forest weary bijection} as 
\begin{alignat*}{2}
  &\rho_{m} : \mathcal{F}_{m,n} \to \mathcal{P}\mathcal{F}_{m,n}  \\
&\ \ \phantom{\rho_{m,n} :} F \mapsto  \pi_F 
\end{alignat*} 
 that sends an ordered $(m,n)$-forest $F$ to its preference function $\pi_F$. 
 The factorization $\rho_m = \psi_m \circ \phi_m^{-1}$, together with Theorem \ref{thm:maximal_chains_forests} and Theorem \ref{thm:maximal_chains_parking_functions}, implies that $\rho_{m}$ is a bijection. For $n = m$, this construction recovers the tree weary bijection from \cite{LRT-Weary} between rooted trees of order $n+1$ and parking functions of length $n$.

\subsection{Statistics and the forest weary bijection.}  
The tree weary bijection preserves a  family of \emph{priority statistics}. 
On the tree side, these statistics arise from the priority traversal, while for parking functions 
they correspond to classical numerical invariants, such as the number of linear probes or the number
of lucky cars. On top of this, the set of records of $T$ (that is, vertices with a greater label than 
all its ancestors) coincides with the set of records of $\pi_T$, defined as left-right maxima of the 
bird's eye permutation of $\pi_T$. These equidistribution results were established on \cite{LRT-Weary}. In this section, we show that they can be generalized to the setting of partial parking functions and ordered forests, and much information about the statistics is neatly 
encoded in the priority lattice. 

First, observe that any complete chain of the priority lattice is determined by its top element (a priority forest) and its Jordan-Hölder permutation (a partial permutation). 
We consider statistics on both of these objects. 
Let $P$ be a priority forest. A non-root vertex $i$ of $P$ is called a \emph{small ascent} if its parent is $i - 1$. We denote by \emph{$\sasc(P)$} the number of small ascents of $P$.
    Let \emph{$\diff(P)$} be the aggregate difference between the non-root vertices of $P$ and their respective parents. If $s_P$ denotes the shifted parent map of $P$, then 
\[
\diff(P) = \sum_{i} (i - s_P(i) + 1),
\]
where $i$ ranges over the non-root vertices of $P$.
These statistics admit a natural interpretation in terms of priority search on ordered forests.   
Given an ordered forest $F$ with priority traversal $\tau$, we say that a non-root vertex $i$ is a \emph{priority small ascent} if $i$ is visited immediately after its parent during the priority search of $F$. This condition is equivalent to requiring that vertex $\tau^{-1}(i)$ be a small ascent of the priority forest of $F$. 
The number of priority small ascents of $F$ is denoted by \emph{$\psa(F)$}.
The \emph{waiting time} of a non-root vertex $i$ of $F$ is defined as the number of steps during which $i$ is unblocked but not yet visited by priority search. If $i$ is a root, we set its waiting time to $0$. It follows that a vertex is a priority small ascent if and only if its waiting time is $1$. We denote by \emph{$\wait(F)$} the cumulative sum of waiting times over all vertices in $F$. Note that $\wait(F) = \diff(P)$, where $P$ is the priority forest of $F$.

 Let $\pi$ be an $(m,n)$-parking function. A car is said to be \emph{lucky} for $\pi$ if it parks in its preferred spot. The number of parking attempts (successful or not) before all cars find their parking spots is denoted by \emph{$\probes(\pi)$}.
Note that the displacement statistic of \cite{Stanley_parking}, defined as the total number of failed attempts before all cars find their parking spaces, is  a shift of the probes statistic.  
\begin{lem}
\label{lem: lucky and probes}
    Let $\pi$ be a partial parking function with priority forest $P$. Then, 
    \begin{enumerate}[label=(\alph*)]
        \item $\lucky(\pi)=\sasc(P)$,
        \item   
        \(
        \probes(\pi) = \diff(P).
        \)
    \end{enumerate}
\end{lem}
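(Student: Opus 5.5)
The plan is to work spot by spot instead of car by car, using the identity from Lemma \ref{lem:shifted_parent_map}: if $\omega$ is the bird's eye permutation of $\pi$ and $P$ is its priority forest, then $s_P = \pi\circ\omega$. The occupied spots are exactly the non-root vertices of $P$, and $\omega$ restricts to a bijection from these spots onto the cars $[m]$. So any sum or count over cars can be rewritten as a sum or count over non-root vertices $i$ of $P$, with car $\omega(i)$ parked at spot $i$ and preferring spot $s_P(i)$.

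For (a), car $c=\omega(i)$ is lucky exactly when it parks at its preferred spot, that is, when $\pi(c)=i$. Since $\pi(c)=\pi\circ\omega(i)=s_P(i)=p(i)+1$, this holds exactly when $p(i)=i-1$, which is the definition of a small ascent. Summing over all non-root $i$ gives $\lucky(\pi)=\sasc(P)$.

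For (b), I first pin down how many probes a single car makes. Car $c$ tries spots $\pi(c),\pi(c)+1,\dots$ one at a time. It fails at each occupied spot and stops at the first empty one, which is its parking spot $\omega^{-1}(c)$. It cannot skip an empty spot, and it never moves past its parking spot. So car $c$ makes exactly $\omega^{-1}(c)-\pi(c)+1$ attempts. Reindexing by $i=\omega^{-1}(c)$ turns this count into $i-s_P(i)+1$. Summing over the non-root vertices gives exactly the formula for $\diff(P)$ stated above, so $\probes(\pi)=\diff(P)$.

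The only step needing care is the per-car probe count in (b), specifically that no wraparound or skipping happens. This follows because $\pi$ is a partial parking function, so every car parks, and parking happens at the first free spot at or after $\pi(c)$. I also need to check that the convention for $\probes$ counts the final successful attempt, which the "$+1$" in $\diff$ matches.
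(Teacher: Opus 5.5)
Your proof is correct and follows essentially the same route as the paper. Both arguments use the identity $s_P=\pi\circ\omega$ from Lemma~\ref{lem:shifted_parent_map} to rewrite the per-car count $\omega^{-1}(c)-\pi(c)+1$ as $i-s_P(i)+1$ for the occupied spot $i$. Then (a) follows because a car is lucky exactly when this quantity equals $1$, i.e.\ when $p(i)=i-1$, and (b) follows by summing; your spot-by-spot reindexing is just a more explicit version of the paper's one-line identity.
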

\begin{proof}
    Let $\omega$ denote the bird's eye permutation of $\pi$. By definition, 
    the contribution of car $i$ to the total number of linear probes  is $\omega^{-1}(i) -\pi(i) + 1$. Also, the shifted parent map of $P$ satisfies 
    $s_P(i) = \pi \circ \omega.$ 
    Hence, $s_P(\omega^{-1}(i)) = \pi(i)$ and the number of parking attempts of car $i$ is 
    $
    \omega^{-1}(i) - \pi(i) + 1 = \omega^{-1}(i) - s_P(\omega^{-1}(i)) + 1. 
    $
    Both claims follow from this identity.
\end{proof}

A  \emph{partial permutation record} is defined as a record (left-to-right maxima) of one of its maximal connected subwords.  
For instance, $2 - 143$ has records 1, 2 and 4. A \emph{partial parking function record} is a record of its partial bird's eye permutation.
A \emph{forest record} of $F$ is a nonroot node such that its label is the largest along the unique path from a root to that vertex. 
Tree record statistics have been studied enumeratively in  \cite{LRT-bijective-visit, LRT-GF-records,}. 
\begin{lem}
\label{lem: records from traversal}
Let $F$ be an ordered forest with priority traversal $\tau$. Then, 
$\Rec(F) = \Rec(\tau).$
\end{lem}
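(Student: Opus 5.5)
We must show that the forest records of $F$ coincide with the records of $\tau$. Here the records of $\tau$ are taken maximal connected subword by maximal connected subword, where the subwords are separated by the symbols $-$ that mark roots. The plan is to reduce to a single component tree and then prove a two-sided statement about that tree.

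\textbf{Reduction to one tree.} During priority search, all vertices of $T_k$ are visited before the search moves on to $T_{k+1}$. The only $-$ symbols in $\tau$ are the root visits $\circ_1,\dots,\circ_{n-m}$ (the visit of $\circ$ is not recorded). Hence the maximal connected subwords of $\tau$ are exactly the traversals of the nonroot vertices of $T_0,T_1,\dots$ in turn; an empty subword comes from a trivial tree. Forest records are also defined tree by tree. So it suffices to prove the following: for a single tree $T$ with priority traversal $\tau=\tau(1)\cdots\tau(k)$ of its nonroot vertices, a vertex $v$ is a record of $\tau$ if and only if $v$ exceeds all of its nonroot ancestors.

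\textbf{Key invariant.} Throughout the search, the set of visited vertices is closed under taking ancestors. The unblocked vertices are exactly the children of visited vertices (or of the root) that are not yet visited.

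\textbf{Forest record $\Rightarrow$ record of $\tau$.} Let $v$ be larger than all its ancestors, and suppose some $u>v$ is visited before $v$. Choose the first such $u$. At the moment $u$ is chosen, $v$ is not yet visited. Walk down the path from the root to $v$ and let $w$ be the first unvisited vertex on it. Then $w$ is unblocked at that moment, and $w\le v<u$, since $w$ is either $v$ itself or an ancestor of $v$. So $w$ has higher priority than $u$, and the search would have chosen $w$ instead of $u$. This contradicts the choice of $u$, so no larger vertex precedes $v$ and $v$ is a left-to-right maximum.

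\textbf{Record of $\tau$ $\Rightarrow$ forest record.} Suppose $v$ has an ancestor $a>v$. Then $a$ is visited before $v$, because its visit is needed to unblock $v$. So $a$ precedes $v$ in $\tau$ and is larger, and $v$ is not a left-to-right maximum.

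Combining the two directions with the reduction gives $\Rec(F)=\Rec(\tau)$. The only delicate point is the first direction, where we need an unblocked vertex of smaller label than $u$. That vertex is supplied by choosing the first unvisited vertex on the root-to-$v$ path.
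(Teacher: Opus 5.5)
Your proof is correct and follows the paper's route. The paper also reduces to the individual component trees, using that priority search exhausts each $T_k$ before moving on, and then simply cites the tree case from \cite{LRT-Weary}; you instead prove the tree case directly, via the ancestor-closed visited set and the first unvisited vertex on the root-to-$v$ path (one cosmetic point: choosing $u$ to be the \emph{first} larger vertex is unnecessary, since the contradiction is with the rule that the search always visits the unblocked vertex of highest priority, not with the minimality of $u$).
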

To see that  this lemma holds, it suffices to apply the proof of  \cite{LRT-Weary} to each component tree individually.
Note that the forest record statistic of $F$ cannot be read from its priority forest, but rather it is determined by its priority traversal (inverse Jordan-Hölder permutation).

\begin{thm}
\label{thm:equidistribution}
    Fix $m < n$. The two triplets of statistics 
    \[
    (\Rec, \psa, \wait), \qquad (\Rec, \lucky, \probes) 
    \]
    are jointly equidistributed when the first is consider over the set of ordered $(m,n)$-forests and the second over the set of $(m, n)$-parking functions.
\end{thm}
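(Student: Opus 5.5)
We will show that the forest weary bijection $\rho_m = \psi_m\circ\phi_m^{-1}$ sends the triple $(\Rec,\psa,\wait)$ of an ordered $(m,n)$-forest $F$ to the triple $(\Rec,\lucky,\probes)$ of $\pi_F$; equidistribution then follows because $\rho_m$ is a bijection. The idea is to route everything through the common complete $m$-chain $C=\phi_m^{-1}(F)$. Let $P$ be its top element and $\lambda$ its Jordan--Hölder permutation. By Theorem~\ref{thm:maximal_chains_forests}, $F$ has priority forest $P$ and priority traversal $\tau=\lambda^{-1}$. By Theorem~\ref{thm:maximal_chains_parking_functions}, $\pi_F=\psi_m(C)$ has the same priority forest $P$ and bird's eye permutation $\omega_{\pi_F}=\lambda^{-1}$. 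The proof therefore splits statistic by statistic into two kinds: the first two statistics are read from $P$, and the records are read from $\lambda^{-1}$.

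For the statistics determined by $P$, we proceed as follows.

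\textbf{Small ascents versus lucky cars.} A vertex is a priority small ascent of $F$ exactly when the corresponding vertex of its priority forest is a small ascent. This relabeling is a bijection between vertices, so $\psa(F)=\sasc(P)$. By Lemma~\ref{lem: lucky and probes}(a), $\sasc(P)=\lucky(\pi_F)$.

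\textbf{Waiting time versus probes.} We use the remark $\wait(F)=\diff(P)$. To be safe we will justify it: a non-root vertex $v$ visited at step $j$ has its parent visited at step $j'$, and $v$ waits $j-j'$ steps (counting the root's step $0$ appropriately for each component). In $P$ these steps become the labels $j$ and $p(j)=j'$, so the waiting time equals $j-s_P(j)+1$. Summing gives $\diff(P)$, and Lemma~\ref{lem: lucky and probes}(b) gives $\diff(P)=\probes(\pi_F)$.

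\textbf{Records.} By Lemma~\ref{lem: records from traversal}, $\Rec(F)=\Rec(\tau)=\Rec(\lambda^{-1})$. By definition, $\Rec(\pi_F)$ is the set of records of $\omega_{\pi_F}=\lambda^{-1}$. We must check that the two notions of partial permutation record agree. The maximal connected subwords of $\tau$ are the blocks between the $-$ symbols marking roots, and these correspond to the component trees visited after $\circ$. The maximal subwords of $\omega_{\pi_F}$ are the car clusters. Since both words are the same partial permutation $\lambda^{-1}$ written on $[n]$, their block structures coincide, and so the record sets coincide.

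Putting the three comparisons together, $\rho_m$ preserves the full triple as sets and values, which proves joint equidistribution.

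The main obstacle is the record step. Lemma~\ref{lem: records from traversal} is stated via a per-component appeal to \cite{LRT-Weary}, so we need that forest records of each $T_k$ are exactly the left-to-right maxima of the segment of $\tau$ visiting $T_k$. Care is also needed with the first tree, whose root $\circ$ is not recorded in $\tau$. If this alignment needs more detail, we would argue directly: a vertex is a left-to-right maximum within its block if and only if every earlier-visited vertex of the same tree has a smaller label. During priority search that set contains all ancestors, and any earlier-visited larger label would have to be blocked by a smaller ancestor, which forces the vertex not to be a record. This is the same argument as in the tree case.
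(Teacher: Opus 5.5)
Your proposal is correct and is essentially the paper's proof. Through $C=\phi_m^{-1}(F)$, the forest $F$ and $\pi_F=\rho_m(F)$ share the priority forest $P$, which gives $\psa(F)=\sasc(P)=\lucky(\pi_F)$ and $\wait(F)=\diff(P)=\probes(\pi_F)$ by Lemma~\ref{lem: lucky and probes}. They also share $\lambda^{-1}$ as priority traversal and bird's eye permutation, which gives equal record sets by Lemma~\ref{lem: records from traversal}.

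There is one small slip in your optional fallback argument for the record lemma. If a label $u>v$ of the same tree is visited before $v$, then at that moment $v$ is blocked by an unvisited ancestor whose label is at least $u$. That ancestor is \emph{larger} than $v$, not smaller, and this is what prevents $v$ from being a record.
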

\begin{proof}
We show that these statistics are preserved by the forest weary bijection.
The forest weary bijection sends an ordered forest $F$ to a parking function $\pi_F$ with the same priority forest as $F$, say $P$. By Lemma \ref{lem: lucky and probes} we have 
$
\psa(F) = \sasc(P) = \lucky(\pi_F),$ and  $\wait(F) = \diff(P) = \probes(\pi_F).
$
On the other hand, the priority traversal of $F$ coincides with the bird's eye permutation of $\pi$, say $\omega$. Hence $\Rec(\pi_F) = \Rec(\omega)$ by definition, and 
 $\Rec(F) = \Rec(\omega)$ by Lemma \ref{lem: records from traversal}.
\end{proof}

We conclude this section by noting that some additional structural properties of ordered forests and partial parking functions are preserved by the weary bijection. For instance, the number of vertices in $F$ with exactly $k$ children coincides with the number of symbols in $[n]_0$ that appear in $\pi_F$ with multiplicity $k$ for each $k \geq 0$. This claim follows directly from \eqref{def:preference}.
Furthermore, any additional statistics defined solely in terms of the priority forest and the Jordan-Hölder permutation will be equidistributed across both families, and can be incorporated into the tuples in Theorem \ref{thm:equidistribution}. For example, for any ordered forest $F$, the number of inversions of the priority traversal of $F$ equals the number of inversions of the bird's eye permutation of $\pi_F$. The case of forest records is particularly interesting, as its usual definition does not involve the priority traversal.

\subsection{Parking functions and the priority lattice}

We close this section with a description of the priority lattice in the language of parking functions.  Assume that $P$ and $P'$ stand for two priority trees in $\Pi(n)$, and that $\rank P' = m'$ and $\rank P = m$.
The nodes of $\Pi(n)$ represent the priority forests of partial parking functions, with the exception of the top element $\hat{1}=\texttt{FAIL}$, which represents the class of all failed parking preferences. In contrast, the bottom element corresponds to the parking process in which no car has yet attempted to park.

 Furthermore, $P' \le P$ if and only if there exist partial parking functions $\pi'$ and $\pi$, whose priority forests are, respectively,  $P'$ and $ P$, and such that $\pi'$ is a restriction of $\pi$ to $[m']$.   If in addition $P' \lessdot P$, then Car $m$ parks at the spot designated by the edge labeling $\lambda(P', P)$. 
  The rank of $P$ is the number of cars that park under any partial parking function $\pi$ with priority forest $P$, whereas the corank of $P$ minus one is equal to the  number of empty spaces the bird's eye permutation $\omega_\pi$ of any such $\pi$.  Atoms are in bijection with parking spots, and coatoms with priority trees of parking functions.

 Theorem \ref{thm:maximal_chains_parking_functions} shows that  maximal chains of the proper interval $I_T = [\hat 0, T]$ of an increasing tree $T$ are in bijection with parking functions with priority tree $T.$ Is in this sense that our lattice refines the lattice of noncrossing partitions, see Stanley's \cite{StanleyMaximalChainsParking}. Moreover, given any priority forest $P$, the maximal chains in $I_P = [\hat 0, P] $ are in bijection with  $(m,n)$--parking functions with priority forest $P.$


\section{Properties of the priority lattice.}

\subsection{Enumerative results}

Let $\Pi(n,k)$ be the set of elements of rank $k$ in $\Pi(n)$.
The \emph{Whitney number of the second kind}, denoted by  $W_k$,  is defined as the number  of elements of rank $k$ of $\Pi(n).$ 
Fix $n$. Then, $W_k =|\Pi(n,k)|$ is different from zero if and only if $0 \leq k \leq n+1$. 

In general, 
$
(W_0,  W_1, \ldots    W_n)
$ is not a log-concave sequence. For $n=6$, we obtain
$
 1,$ $ 6,$ $ 20,$ $ 52,$ $ 126,$ $ 312,$ $ 720,$
and the triple  $(52, 126, 312)$ verifies that  $126^2 = 15876 <  16224  = 52\cdot312 $. This is the smallest counterexample to log concavity. However,  it is always an increasing sequence. 
\begin{table}[h!]
\centering
\renewcommand{\arraystretch}{1.3}
\begin{subtable}[t]{0.44\textwidth}
\centering
\small
\begin{tabular}{c|ccccccc}
    \toprule
    $n \backslash k$ & 1 & 2 & 3 & 4 & 5 & 6 & 7 \\
    \midrule
    0 & 1 & -1 & 0 & 0 & 0 & 0 & 0 \\
    1 & 1 & -1 & 0 & 0 & 0 & 0 & 0 \\
    2 & 1 & -2 & 1 & 0 & 0 & 0 & 0 \\
    3 & 1 & -3 & 3 & -1 & 0 & 0 & 0 \\
    4 & 1 & -4 & 6 & -4 & 1 & 0 & 0 \\
    5 & 1 & -5 & 10 & -10 & 5 & -1 & 0 \\
    6 & 1 & -6 & 15 & -20 & 15 & -6 & 1 \\
    \bottomrule
\end{tabular}
\caption{Whitney numbers of the first kind,
signed binomial coefficients.}
\label{tab:whitneyfirst}
\end{subtable}%
\qquad
\begin{subtable}[t]{0.44\textwidth}
\centering
\small
\begin{tabular}{c|cccccccc}
    \toprule
    $n \backslash k$ & 1 & 2 & 3 & 4 & 5 & 6 & 7 & 8 \\
    \midrule
    0 & 1 & 0 & 0 & 0 & 0 & 0 & 0 & 0 \\
    1 & 1 & 1 & 0 & 0 & 0 & 0 & 0 & 0 \\
    2 & 1 & 2 & 2 & 1 & 0 & 0 & 0 & 0 \\
    3 & 1 & 3 & 5 & 6 & 1 & 0 & 0 & 0 \\
    4 & 1 & 4 & 9 & 16 & 24 & 1 & 0 & 0 \\
    5 & 1 & 5 & 14 & 31 & 64 & 120 & 1 & 0 \\
    6 & 1 & 6 & 20 & 52 & 126 & 312 & 720 & 1 \\
    \bottomrule
\end{tabular}
\caption{Whitney numbers of the second kind, \url{https://oeis.org/A084938}.}
\label{tab:whitney_second}
\end{subtable}
\caption{}
\end{table}

\begin{lem}
\label{Whitney_increasing}
The sequence defined by the  Whitney numbers of the second kind of $\Pi(n)$ is increasing. Moreover, for $n\ge 3$, it is strictly increasing.
\end{lem}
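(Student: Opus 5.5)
The plan is to build, for each $0\le k<n$, an injection $\iota_k:\Pi(n,k)\to\Pi(n,k+1)$, which gives $W_k\le W_{k+1}$. Then, for $n\ge 3$, I will exhibit an element of $\Pi(n,k+1)$ outside the image of $\iota_k$. The sequence considered is $(W_0,\dots,W_n)$, as in the discussion above; the top element $\hat 1$, of rank $n+1$ with $W_{n+1}=1$, is excluded.

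\emph{The map.} Let $P=(T_0,\dots,T_{n-k})$ be a priority forest of rank $k<n$. It has $n+1-k\ge 2$ component trees. Let $a$ be the root of $T_{n-k-1}$ and $r$ the root of $T_{n-k}$. Define $\iota_k(P)$ by adding the edge $\{a,r\}$, so that $r$ becomes a child of $a$. This is exactly the grafting move used in the proof of Lemma~\ref{Phi_graded}. The result is a priority forest:
\begin{itemize}
\item the merged tree is increasing, because $a<r$;
\item its vertex set is the union of two consecutive intervals, hence an interval;
\item all other trees are unchanged.
\end{itemize}
It therefore lies in $\Pi(n,k+1)$.

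\emph{Injectivity.} I will show $P$ can be recovered from $Q=\iota_k(P)$. In $Q$, the last tree has root $a$. The vertex $r$ is the largest child of $a$, since every other vertex of that tree came from $T_{n-k-1}$ and is smaller than $r$. So we recover $r$ as the largest child of the root of the last tree, and deleting the edge $\{a,r\}$ returns $P$. For the strictness step I record what this says about the image: $Q$ lies in the image exactly when the root of its last tree has at least one child $c$ and the subtree of the largest such child $c$ is precisely the interval $[c,n]$.

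\emph{Strictness for $n\ge 3$.} This is the only step needing care; I split into two cases.
\begin{itemize}
\item If $k\le n-2$, take any priority forest of rank $k+1$ whose last tree is the single vertex $n$. Such forests exist because $k+1\le n-1$ edges fit on $[0,n-1]$, for instance as a path $0-1-\cdots$. Its last tree has no edges, so it is not in the image.
\item If $k=n-1$, the targets are increasing trees on $[0,n]$. Take the tree in which $0$ has children $1$ and $2$, vertex $3$ is a child of $1$, and $4,\dots,n$ form a chain below $3$; this requires $n\ge 3$. The largest child of the root is $2$, but its subtree is $\{2\}\ne[2,n]$, so the tree is not in the image.
\end{itemize}
Thus $W_k<W_{k+1}$ for every $0\le k\le n-1$ when $n\ge 3$.
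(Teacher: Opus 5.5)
Your proof is correct and follows essentially the same route as the paper. Both build an injection $\Pi(n,k)\to\Pi(n,k+1)$ by grafting the root of one component tree onto the root of the preceding one, invert it by deleting the edge from that root to its largest child, and exhibit explicit elements outside the image when $n\ge 3$. The differences are cosmetic: you graft the last two trees where the paper grafts $T_1$ onto $T_0$ (the two maps coincide when $k=n-1$), and you check your non-image witnesses against an exact description of the image, so they are sound, whereas the paper's top-rank witness (``trees containing the edge $(n-2,n)$'') can lie in the image, e.g.\ the tree with edges $\{0,1\},\{1,2\},\{1,3\}$ for $n=3$.
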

\begin{proof} For $n\le2$ the result  is immediate and can be verified by inspection.
Therefore, we assume that $n\ge 3$, and fix  $k$ in $[n]$. To show that $|\Pi(n, k-1)| <|\Pi(n, k)|$ we define an injective, but not surjective, map $i_k : \Pi(n, k-1) \to \Pi(n, k)$.

Let $P$ be a priority forest of rank $k$ in $[2,n]$. Then, $P= T_0 T_1 \ldots T_{n-k}$.  Define $i_k(P)$ as the forest obtained by grafting the root of $T_1$ into the root of $T_0.$ The map $i_k$  has a left inverse. The preimage of a forest in the image is the forest resulting from the deletion of  the edge that joins  0 to its largest child. Therefore, $i_k$ is an injective map.

To see that $i_k$ is not a surjection, note that if $k \ne n$, then any increasing forest $T_0T_1$, where $T_0=\circ$ and $T_1$ is not trivial, has no preimage under $i_k$.  On the other hand, if $k=n$, increasing trees containing edge $(n-2,n)$ have no preimage under $i_k$. 
\end{proof}

  The \emph{corank generating function} of the priority lattices is defined as      \[
     \Co(z,t) 
     =
     \sum_{n\ge0} \sum_{k=0}^n  \ \big|\Pi(n, k)\big| \  z^n t^k.
    \]  
    The coefficient of $z^nt^k$ in $\Co(z,t)$  is the number of elements of corank $k$ in $\Pi(n).$

    \begin{lem}
    \label{le:corankgf}

  The corank generating function of the priority lattice $\Pi(n)$  is equal to the coefficient of $z^{n+1}$ in
       \[
     \Co(z,t)  = \frac{1}{1-t P(z)} + \frac{z}{1-z}
    \]
    where $
    P(z) = \sum_{n\ge1} (n-1)! \, z^n
    $ is the ordinary generating function for increasing trees according to the number of nodes. 
       \end{lem}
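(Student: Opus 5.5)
Every element of $\Pi(n)$ other than $\hat 1$ is a priority forest, and I will count these by decomposing them into their component trees. The key structural fact is in the definition: the component trees $T_0,\dots,T_{n-m}$ of a priority forest on $[n]_0$ have vertex sets that are consecutive intervals of integers, in order. Each $T_j$ is an increasing tree on its interval. After relabeling the interval by $1,\dots,t_j$, it is the same as an increasing tree on $t_j$ nodes.

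So I will first show that a priority forest on $[n]_0$ with $k$ component trees is the same as two pieces of data. The first is a composition $(t_0,\dots,t_{k-1})$ of $n+1$ into $k$ positive parts. The second is a choice of increasing tree on $t_j$ nodes for each $j$. In one direction, cut $[n]_0$ into the consecutive intervals given by the composition and place the chosen increasing trees on them. Every label in $T_j$ is then smaller than every label in $T_{j'}$ for $j<j'$, and each root is the minimum of its interval. Hence the result is a priority forest, and the map is clearly invertible.

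Next, the number of increasing trees on $t$ nodes is $(t-1)!$: insert vertices $2,\dots,t$ one at a time, and vertex $i$ has $i-1$ choices of parent. Hence $P(z)=\sum_{t\ge1}(t-1)!\,z^t$ counts increasing trees by number of nodes. By the decomposition, the generating function for priority forests with exactly $k$ trees, weighted by $z^{\#\text{nodes}}$, is $P(z)^k$. Marking the number of trees (the corank $n+1-m$) by $t$ and summing over $k\ge 0$ gives $\sum_k t^kP(z)^k = 1/(1-tP(z))$. The coefficient of $z^{n+1}$ in this series then records the priority forests of $\Pi(n)$ by corank. The $k=0$ term contributes only the constant $1$, which lies outside every $z^{n+1}$ with $n\ge0$.

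Finally, I need to add $\hat 1$, which has rank $n+1$ and therefore corank $0$, giving $t^0$. One element is contributed for each $n\ge0$, i.e. for each power $z^{n+1}$ with $n+1\ge1$. This is the series $\sum_{n\ge0} z^{n+1}=z/(1-z)$, and adding it gives the stated formula.

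The main obstacle is bookkeeping rather than mathematics. I have to reconcile the indexing: the statement's definition of $\Co$ uses $z^n$, while the lemma extracts $z^{n+1}$ because there are $n+1$ nodes. The corank convention must also be pinned down. With corank equal to the number of trees, the top element sits at $t^0$ and the forests at $t^k$ with $k\ge1$. I will state the coefficient extraction carefully. As a check, I will compare the expansion with Table~\ref{tab:whitney_second} for $n\le 3$; for instance, $n=2$ gives $t^3+2t^2+2t+1$.
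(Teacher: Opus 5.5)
Your proof is correct and follows the paper's argument with the details filled in. A priority forest decomposes into a sequence of increasing trees on consecutive intervals, which gives $1/(1-tP(z))$ with $t$ marking the number of trees (the corank), and $z/(1-z)$ adds $\hat 1$ at corank $0$; your insertion count of increasing trees replaces the paper's appeal to their bijection with permutations, and your reconciliation of the $z^{n}$ versus $z^{n+1}$ indexing is the right reading of the paper's definition of $\Co$. (For the numerical check, the printed rows $n=0,1$ of Table~\ref{tab:whitney_second} leave out $\hat 1$, so only $n=2,3$ will agree as printed; your formula correctly gives $1+t$ and $1+t+t^2$ for $n=0,1$.)
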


       \begin{proof}  The first summand comes from the bijection between increasing trees and permutations, the decomposition of a priority forest  as a sequence of rooted increasing trees, and the observation that  the number of component trees  in a forest of order $n$ with $k$ edges is $n-k.$ The second summand counts the top elements on each of the $\Pi(n)$'s.
       \end{proof}

The  generating function for sequence giving the cardinality of  $\Pi(n)\setminus \hat 1$ (the sequence obtained after ignoring the top element of $\Pi(n)$) is equal to $\frac{1}{1-t P(z)}$. Therefore, it is a shift of sequence \url{https://oeis.org/A051295} of the OEIS ``the number of permutations of $[n]$ such that the elements of each cycle of the permutation form an interval".

    \begin{figure}[h!]
    \centering
    \begin{subfigure}[t]{0.32\textwidth}
        \centering
        \includegraphics[width=\textwidth]{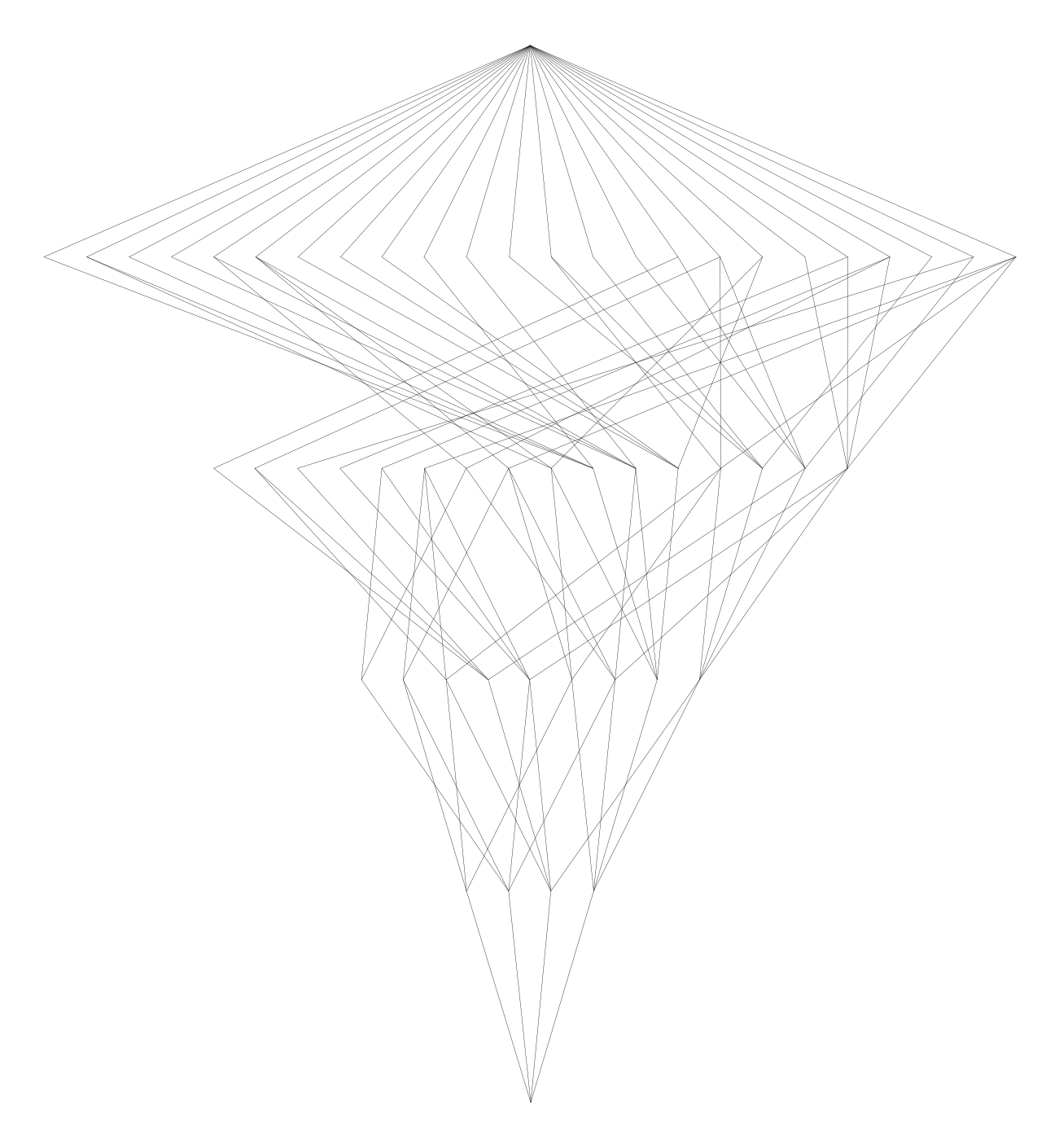}
        \caption{$n=4$}
    \end{subfigure}\hfill
    \begin{subfigure}[t]{0.32\textwidth}
        \centering
        \includegraphics[width=\textwidth]{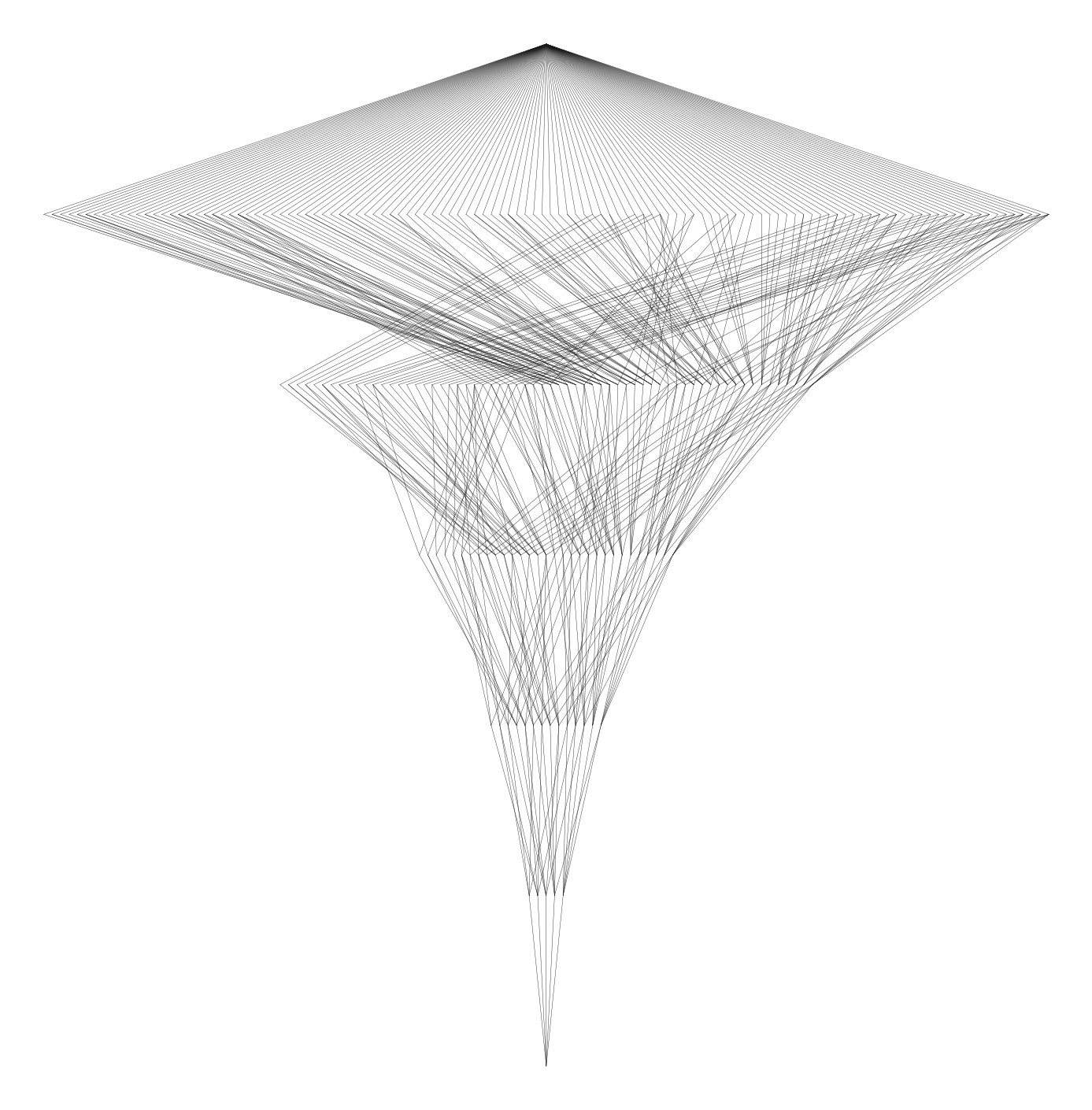}
        \caption{$n=5$}
    \end{subfigure}\hfill
    \begin{subfigure}[t]{0.32\textwidth}
        \centering
        \includegraphics[width=\textwidth]{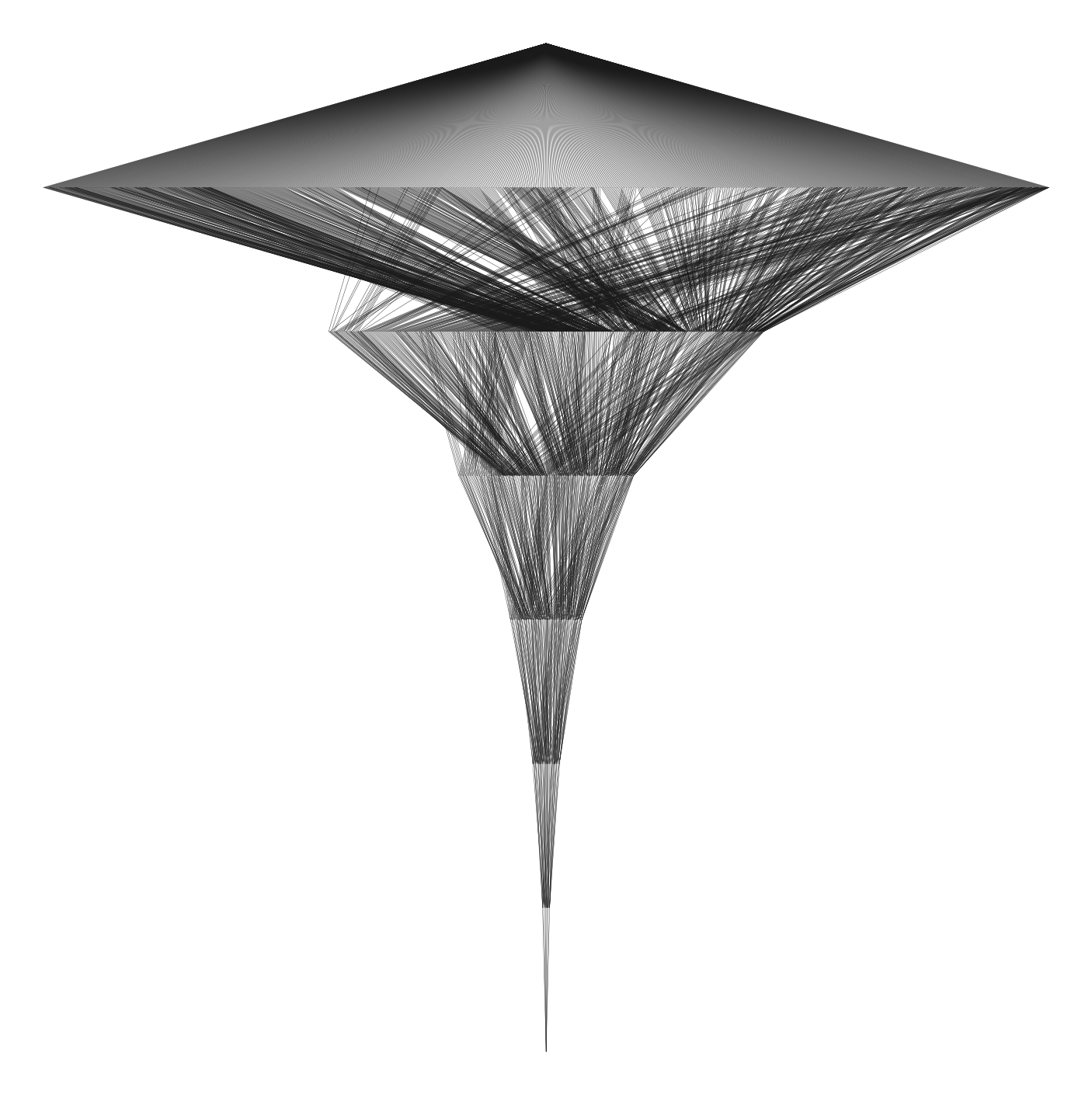}
        \caption{$n=6$}
    \end{subfigure}

    \caption{Priority lattices for increasing values of $n$.}
    \label{fig:priority-lattices}
\end{figure}

Since $\Pi(n)$ contains $n$ rank 1 but $n!$ corank 1 elements, the following corollary holds.

\begin{cor}
 The priority lattice $\Pi(n)$ is not self dual for $n\ge3$.
\end{cor}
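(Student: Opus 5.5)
The plan is to compare the Whitney numbers of the second kind at rank $1$ and at corank $1$. A lattice isomorphism between $\Pi(n)$ and its dual would have to send rank-$k$ elements to corank-$k$ elements, so self-duality forces $W_1 = W_n$. I will show this equality fails for every $n \ge 3$.

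First I count rank $1$. The atoms of $\Pi(n)$ are the $(1,n)$-priority forests. As noted after Lemma~\ref{Phi_graded}, each such forest has a single edge, and that edge is a small ascent $\{i,i+1\}$. It can connect $0$ and $1$, or two consecutive roots $i$ and $i+1$ with $i\le n-1$. This gives $n$ choices, so $W_1 = n$.

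Next I count corank $1$. Since $\rho(\hat 1) = n+1$, the corank-$1$ elements are exactly the rank-$n$ elements, which are the coatoms. These are the increasing trees on $[n]_0$ rooted at $0$. Such a tree is determined by its parent map, where vertex $i$ may choose any parent in $[0,i-1]$. Hence there are $n!$ of them, matching the $k=n$ entry of Table~\ref{tab:whitney_second} and the bijection with $\SS_n$ mentioned in the description of cover relations. So $W_n = n!$.

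Finally I compare the two counts. For $n\ge 3$ we have $n! \ge 2n > n$, so $W_1 \neq W_n$ and $\Pi(n)$ cannot be isomorphic to its dual. There is no real obstacle here. The one point to state carefully is that a poset anti-automorphism of a graded lattice reverses rank, sending rank $k$ to rank $n+1-k$, so it must match the atoms with the coatoms bijectively.
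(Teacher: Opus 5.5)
Your proposal is correct and is essentially the paper's own argument: $\Pi(n)$ has $n$ atoms but $n!$ coatoms, an order-reversing bijection of $\Pi(n)$ onto itself must exchange the atoms with the coatoms, and $n! > n$ for $n\ge 3$. One cosmetic slip that does not affect your count: the columns of Table~\ref{tab:whitney_second} are shifted by one (column $k$ records rank $k-1$), so $n!$ appears in column $k=n+1$, not $k=n$.
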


On the other hand, for $n=0,1,2$, it can be easily checked that  $\Pi(n)$ is self dual.

\subsection{Maximal Interval Chains via the Hook-Length Formula.}

Gessel and Seo's hook length formula \cite{Gessel-Seo} was used in \cite{LRT-Weary} to compute the number of rooted trees and parking functions with a fixed priority tree $T$. In the framework of the priority lattice, this is the number of maximal chains of the principal ideal $[\hat 0, T]$.  We extend this result to ordered forests and partial parking functions by computing the number of maximal chains in $[\hat 0, P]$ for a given priority forest $P$.

Given a priority forest $P$ with non-root vertices $V \subseteq [n]$, we define a partial order $<_P$ on $V$ via the following cover relation: for $i, j \in V$ with $i < j$, we set $j \lessdot_P i$ if $j$ is the minimal label greater than $i$  such that $s_P(j) \leq i$. It is straightforward to verify that the Hasse diagram of the poset $(V, <_P)$, denoted by $H_P$, is a decreasing forest (see Figure \ref{fig:hooklength_poset}). 
For any $v$ in $V$, the \emph{hook-length} $h(v)$ is defined as the number of descendants of $v$ in $H_P$, including $v$ itself. Compare with Section 6.2 of \cite{LRT-Weary}, where the order $<_P$ is introduced in terms of parking function blueprints (for the case $m = n$).
\begin{figure}[t]
    \centering
    \normalfont
    \begin{subfigure}[c]{0.2\textwidth}
        \centering
        \normalfont \small
\begin{tikzpicture}[
    level distance=0.6cm,
    level 1/.style={sibling distance=0.75cm},
    level 2/.style={sibling distance=0.75cm},
    record/.style={
        circle,
        draw=black,
        fill=black,
        inner sep=1.3pt,
        label={#1},
        solid
    },
]
\def\rootsep{1.5}
    \node[record, label={right:0}] (0) at (0*\rootsep, 0) {}
        child {node[record, label={right:1}] (1) {}
            child {node[record, label={right:2}] (2) {}
                child {node[record, label={right:3}] (3) {}}}}
        child {node[record, label={right:4}] (4) {}
        child {node[record, label={right:5}] (5) {}
        }};
    \node[record, label={right:6}] (6) at (1*\rootsep, 0) {}
        child {node[record, label={right:7}] (7) {}}
        child {node[record, label={right:8}] (8) {}};
\end{tikzpicture}
        \caption{}
    \end{subfigure}%
    \qquad \qquad
    \begin{subfigure}[c]{0.2\textwidth}%
        \centering
            \normalfont \small
\begin{tikzpicture}[
    level distance=0.6cm,
    level 1/.style={sibling distance=0.5cm},
    level 2/.style={sibling distance=0.75cm},
    record/.style={
        circle,
        draw=black,
        fill=black,
        inner sep=1.3pt,
        label={#1},
        solid
    },
]
\def\rootsep{1}
    \node[record, label={right:4}] (4) at (0*\rootsep, 0) {}
        child {node[record, label={below:1}] (1) {}}
        child {node[record, label={below:2}] (2) {}}
        child {node[record, label={below:3}] (3) {}};
    \node[record, label={right:5}] (5) at (1*\rootsep, 0) {};
    \node[record, label={right:8}] (8) at (2*\rootsep, 0) {}
        child {node[record, label={right:7}] (7) {}};
    \phantom{
    \node[record, label={right:0}] (0) at (1*\rootsep, 0.5cm) {}
        child {node[record, label={right:1}] (1) {}
            child {node[record, label={right:2}] (2) {}
                child {node[record, label={right:3}] (3) {}}}}
        child {node[record, label={right:4}] (4) {}};
    }
\end{tikzpicture}%
        \caption{}
    \end{subfigure}
    \caption{(a) A priority $(7, 8)$-forest $P$.
    (b) The Hasse diagram $H_P$. }
    \label{fig:hooklength_poset}
\end{figure}
\begin{p}
    Fix a priority $(m, n)$-forest $P$ with non-root vertices $V \subseteq [n]$. Then, the number of maximal chains in $[\hat 0, P]$ is given by 
    \[
    \frac{m!}{\prod_{v\in V} h(v)}.
    \]
    This formula also counts the number of ordered $(m, n)$-forests and the number of  $(m, n)$-parking functions with priority forest $P$.
\end{p}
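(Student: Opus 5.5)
Our plan is to identify the maximal chains of $[\hat 0, P]$ with the linear extensions of the poset $(V,<_P)$. Since $H_P$ is a forest, the classical hook-length formula for forests (Knuth) then counts these extensions as $m!/\prod_{v} h(v)$. The last sentence of the statement follows immediately from Theorems \ref{thm:maximal_chains_forests} and \ref{thm:maximal_chains_parking_functions}: for fixed top element $P$, those bijections restrict to bijections between maximal chains of $[\hat 0,P]$ and, respectively, ordered $(m,n)$-forests and $(m,n)$-parking functions with priority forest $P$.

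A maximal chain of $[\hat 0,P]$ is determined by its Jordan-Hölder permutation $\lambda$, that is, by the order in which the $m$ edges of $P$ are added. Each edge is identified with its larger endpoint $v\in V$. So a chain is a total ordering $v_1,\dots,v_m$ of $V$ such that every intermediate forest $P_k$ (the edges of $v_1,\dots,v_k$) is a priority forest. The key step is to characterize exactly when $P_k$ is a priority forest. As in the proof of Lemma \ref{lem:priority traversal => Jordan Holder}, the trees of $P_k$ must have interval vertex sets. Equivalently, for every $j\in V$ whose edge is present in $P_k$, all vertices in $[s_P(j),j]$ must lie in the same component of $P_k$. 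This in turn means that every $i$ with $s_P(j)\le i<j$ and $i\in V$ has its edge present as well: otherwise $i$ would be a root of $P_k$ sitting strictly inside the interval spanned by the tree containing $j$ and its parent $s_P(j)-1$. We therefore claim that $\lambda$ gives a chain if and only if, whenever $s_P(j)\le i<j$ with $i\in V$, the edge of $i$ is added before the edge of $j$. We verify both directions. For necessity, we use the interval property of increasing trees, as in the contradiction step of Lemma \ref{lem:bird's eye => Jordan Holder}. For sufficiency, we argue by induction on $k$: adding the edge of $j$ merges the tree with vertex set $[a,j-1]$ containing $s_P(j)-1$ with the tree rooted at $j$, and these two intervals are adjacent.

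Next we check that this family of constraints ``$i$ before $j$ whenever $s_P(j)\le i<j$'' has the same transitive closure as $<_P$, with $j$ below $i$ in $<_P$ meaning that $i$ must come first. The cover relation $j\lessdot_P i$ picks the minimal $j>i$ with $s_P(j)\le i$, so every cover is one of our constraints. Conversely, take a constraint $s_P(j)\le i<j$ and let $j'$ be the minimal element above $i$ with $s_P(j')\le i$. Then $j'\le j$. If $j'<j$, we want $s_P(j)\le j'$, which lets us iterate until we reach $j$. This inequality should follow from the non-crossing (nested-arc) structure of priority forests: the intervals $[s_P(j),j]$ form a laminar family, since the trees are increasing and have interval vertex sets. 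This laminarity is exactly what also makes $H_P$ a forest, as the paper asserts. Once it is established, the chains of $[\hat 0,P]$ are precisely the linear extensions of $(V,<_P)$ read in the appropriate direction.

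The main obstacle is the laminarity and transitivity check: showing that the arcs $[s_P(j),j]$ of a priority forest do not cross, so that the defining constraints are generated by the covers of $H_P$. We will prove it by contradiction. Suppose $s_P(j')<s_P(j)\le j'<j$. Then $j$ lies in the subtree of $s_P(j)-1$ while $j'$ does not, which violates the interval property of subtrees of an increasing priority tree once $j'$ and its ancestors are traced back. With this in hand, we finish by applying the hook-length formula for linear extensions of a rooted forest poset.
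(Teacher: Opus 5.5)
Your route is correct in outline and genuinely different from the paper's. One auxiliary claim, however, is false and should be dropped.

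\textbf{The false claim.} The arcs $[s_P(j),j]$ of a priority forest are \emph{not} laminar in general. Take the priority tree on $[3]_0$ with edges $0\!-\!1$, $0\!-\!2$, $1\!-\!3$. Then $s_P(2)=1<s_P(3)=2\le 2<3$, which is exactly your contradiction hypothesis. Your contradiction argument fails because only the component trees of a priority forest have interval vertex sets. Subtrees need not: the subtree of $1$ here is $\{1,3\}$.

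\textbf{Why you do not need it.} In the transitivity step you already have $s_P(j)\le i<j'$, so $s_P(j)\le j'$ is immediate. Hence $(j',j)$ is again a constraint with $j-j'<j-i$, and induction on $j-i$ finishes. Likewise, $H_P$ is a forest simply because each $i$ has at most one $j$ with $j\lessdot_P i$; laminarity plays no role there either.

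\textbf{The rest checks out.} A subforest of $P$ is a priority forest iff its components are intervals. Necessity of ``$[s_P(j),j-1]$ is added before $j$'' is immediate. In the sufficiency induction, none of $s_P(j),\dots,j-1$ is a root of $P_{k-1}$ while $j$ is a root. So the component of $s_P(j)-1$ is exactly $[a,j-1]$ and merges with the interval starting at $j$.

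\textbf{Comparison with the paper.} The paper argues on the forest side.
\begin{itemize}
\item It distributes the $m$ labels among the component trees, giving a multinomial coefficient.
\item For each tree, it counts the labelings with the prescribed priority tree by quoting Lemma 6.3 of \cite{LRT-Weary}, which rests on Gessel--Seo.
\item It then transfers the count to chains and parking functions through the two bijections.
\end{itemize}
That proof is shorter, but it depends on the external lemma. It also tacitly uses that $H_P$ is the disjoint union of the $H_{T_i}$. This holds because $s_P(j)>i$ whenever $j$ lies in a later tree than $i$.

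Your argument stays inside the lattice and is self-contained. In effect it shows that $[\hat 0,P]$ is the lattice of up-sets of $(V,<_P)$. Maximal chains are then linear extensions of the forest poset $H_P$, and Knuth's hook-length formula applies. This explains where $<_P$ comes from, and it recovers the tree case rather than assuming it. As a bonus, it reproves the distributivity of $[\hat 0,P]$ with an explicit description of its join-irreducibles.
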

\begin{proof}
    Write $P = (T_0, T_1, \dots, T_{n-m})$. Let $V_i$ be the set of non-root vertices of $T_i$ and denote $|V_i| =t_i$, so that $t_0 + t_1+ \cdots + t_{n-m} = m$.  
    We build an ordered $(m,n)$-forest $F$ with priority forest $P$ in two steps.
    First, we partition the $m$ non-root labels among the trees of $F$, which can be done in $\binom{m}{t_0, t_1, \cdots, t_{n-m}}$ ways. Then, for each $i$, we construct a rooted tree whose priority tree is $T_i$ (up to a constant shift of the labels). By \cite[Lemma 6.3]{LRT-Weary}, this can be done in ${t_i!}{\prod_{v \in V_i} h(v)^{-1}}$ ways. It follows that the number of ordered $(m, n)$-forests with priority forest $P$ is 
    \[ 
    \binom{m}{t_0, t_1, \dots, t_{n-m}}\frac{t_0! t_1! \cdots t_{n-m}!}{\prod_{v \in V} h(v)} = \frac{m!}{\prod_{v\in V }h(v)},
    \]
    as desired. 
\end{proof}
In particular, when $P$ is a priority tree, we recover Lemma 6.3 of \cite{LRT-Weary}.

\subsection{Topology of the priority lattice.} In this section, we investigate the topology of the priority lattice. We show that its proper intervals are distributive, whereas the distinctive nature of the top element substantially complicates the study of intervals containing $\hat{1}$.

 \begin{lem}
 The lattice $\Pi(n)$ is not  distributive for $n \ge 3$.
 \end{lem}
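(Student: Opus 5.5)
The plan is to exhibit, for every $n\ge 3$, three elements $a,b,c$ of $\Pi(n)$ that violate the distributive law
\[
a\wedge(b\vee c) = (a\wedge b)\vee(a\wedge c).
\]
The point is that two priority forests with a common upper bound among priority forests have join equal to the union of their edge sets. If the union is not a forest (for instance, some vertex acquires two parents), no priority forest lies above both, so their join is forced to be $\hat 1$. This collapse to the artificial top element is what I expect to break distributivity.

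Concretely, I will work on the vertices $0,1,2,3$ and leave each of $4,\dots,n$ as a trivial component tree. Every forest below then has component vertex sets that are intervals, listed in increasing order, with increasing trees. So each is a genuine priority forest labeled with $[n]_0$, and the argument is uniform in $n\ge 3$. I take the following elements.
\begin{itemize}
\item $b$ has edges $\{0,1\},\{0,2\}$; its nontrivial component is $[0,2]$.
\item $c$ has edges $\{0,1\},\{1,2\}$; its nontrivial component is $[0,2]$.
\item $a$ has edges $\{0,1\},\{0,2\},\{2,3\}$; its nontrivial component is $[0,3]$.
\end{itemize}
All three are increasing trees on intervals, hence priority forests.

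Next I compute both sides. In $b\cup c$, vertex $2$ has two parents, $0$ and $1$. Since edge sets of priority forests form forests and the order on priority forests is containment of edge sets, no priority forest contains both $b$ and $c$. Hence $b\vee c=\hat 1$ and $a\wedge(b\vee c)=a$.

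On the other side, $a\wedge b$ has edge set $E(a)\cap E(b)=E(b)$, so it equals $b$. Similarly $a\wedge c$ has the single edge $\{0,1\}$, which is a priority forest lying below $b$. Using the meet formula stated earlier, these give $(a\wedge b)\vee(a\wedge c)=b\vee(a\wedge c)=b$. Since $b\ne a$, because $a$ has one more edge, distributivity fails.

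The only delicate step is justifying $b\vee c=\hat 1$. The description $E(P\vee P')=E(P)\cup E(P')$ given earlier is valid only when that union is itself a priority forest, so I will argue directly from the definition of the order that the only common upper bound of $b$ and $c$ is $\hat 1$. Everything else is a direct check of edge sets.
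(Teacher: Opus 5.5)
Your proof is correct, and it takes a slightly different route from the paper's.

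The paper exhibits a diamond $M_3$. Take the corank-two forest in which $0$ is the parent of $1,\dots,n-1$ and $n$ is isolated. It is covered by the three trees obtained by attaching $n$ to $0$, $1$ or $2$. Any two of these trees meet in that forest and join to $\hat 1$, since $n$ would acquire two parents.

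Your elements instead span a pentagon. Let $d$ be the forest whose only edge is $\{0,1\}$. Then $d<b<a<\hat 1$, $b\wedge c=a\wedge c=d$ and $b\vee c=a\vee c=\hat 1$, so $\{d,b,a,c,\hat 1\}$ is a sublattice isomorphic to $N_5$.

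Both arguments use the same mechanism: a join collapses to $\hat 1$ once the union of edge sets contains a cycle. Some such collapse is unavoidable, because intervals below a priority forest are distributive.

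Your choice buys a stronger conclusion. An $N_5$ sublattice shows that $\Pi(n)$ is not even modular for $n\ge 3$, which a diamond cannot detect because $M_3$ is modular. Concretely, $\rho(b)+\rho(c)=4<n+2=\rho(b\wedge c)+\rho(b\vee c)$. The paper's diamond is the more symmetric picture (three coatoms over one element). Its proof consists of the figure alone, whereas you verify the failing identity explicitly.

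One caveat, in the same spirit as your remark on joins: the formula $E(P\wedge P')=E(P)\cap E(P')$ is not valid in general either. For the trees with edges $\{0,1\},\{1,2\},\{0,3\}$ and $\{0,1\},\{0,2\},\{0,3\}$, the intersection has the non-interval component $\{0,1,3\}$, and the actual meet has only the edge $\{0,1\}$. In your computation this is harmless. You have $b\le a$, and $E(a)\cap E(c)=\{\{0,1\}\}$ is itself a priority forest, so it is the meet directly from the definition of the order.
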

 \begin{proof}
 Figure \ref{Fig:diamond_lattice} describes a copy of the diamond lattice that sits inside $\Pi(n)$ when  $n \ge 3$.
 \begin{figure}[h!]
    \centering
    \resizebox{0.42\textwidth}{!}{\tikzset{
    treenode/.style={
        circle,
        draw=black,
        fill=black,
        inner sep=0.65pt,
        outer sep=0pt,
        label={#1},
        solid
    },
    every label/.style={
            font=\tiny\fontsize{6}{7}\selectfont,
            label distance = 0.1pt,
            inner sep=1pt,
            outer sep=1pt,
        }
}
\begin{tikzpicture}[
    scale=1,
    level distance=0.5cm,
    level 1/.style={sibling distance=0.35cm},
    level 2/.style={sibling distance=0.35cm},
    pics/one/.style = {
        code = {
            \coordinate (#1-root) at (0,0);
            \node (one) at (0, 0) { $\hat 1$};
            };
    },
pics/forest0/.style = {
    code = {
        \def\rootsep{0.45cm}
        \def\yoffset{0.3cm}
        \def\xoffset{-0.2cm}
        \coordinate (#1-root) at (0,0);
        \node[treenode,label={above:0}] (F2-T0-V-0) at (\xoffset,\yoffset) {}
            child { node[treenode,label={below:\tiny 1}] (F2-T0-V-1) {}}
        child { node[treenode,label={below:\tiny $2$}] (F2-T0-V-2) {} }
            child [missing]
            child { node[treenode,label={below:\tiny$n-1$}] (F2-T0-V-3) {}
            };
        \node (dots) at (0.175cm +\xoffset, -0.66*\yoffset) {\tiny $\cdots$};
        \node[treenode, label={above:$n$}] (nroot) at (2*\rootsep + \xoffset, \yoffset) {};
    }
},
pics/forest1/.style = {
    code = {
        \def\rootsep{0.45}
        \def\yoffset{0.3cm}
        \coordinate (#1-root) at (0,0);
        \node[treenode,label={above:0}] (F2-T0-V-0) at (0,\yoffset) {}
            child { node[treenode,label={below:\tiny 1}] (F2-T0-V-1) {}}
            child { node[treenode,label={below:\tiny $2$}] (F2-T0-V-2) {}}
            child [missing]
            child { node[treenode,label={below:\tiny$n$}] (F2-T0-V-3) {}};
        \node (dots) at (0.175cm, -0.66*\yoffset) {\tiny $\cdots$};
    }
},
pics/forest2/.style = {
    code = {
        \def\rootsep{0.45}
        \def\yoffset{0.3cm}
        \coordinate (#1-root) at (0,0);
        \node[treenode,label={above:0}] (F2-T0-V-0) at (0,\yoffset) {}
            child { node[treenode,label={left:\tiny 1}] (F2-T0-V-1) {} 
            child { node[treenode,label={left:$n$}] (F2-T0-V-2) {} }}
        child { node[treenode,label={below:\tiny $2$}] (F2-T0-V-2) {} }
            child [missing]
            child { node[treenode,label={below:\tiny$n-1$}] (F2-T0-V-3) {}
            };
        \node (dots) at (0.175cm, -0.66*\yoffset) {\tiny $\cdots$};
    }
},
pics/forest3/.style = {
    code = {
        \def\rootsep{0.45}
        \def\yoffset{0.3cm}
        \coordinate (#1-root) at (0,0);
        \node[treenode,label={above:0}] (F2-T0-V-0) at (0,\yoffset) {}
            child { node[treenode,label={left:\tiny $1$}] (F2-T0-V-2) {} }
            child { node[treenode,label={left:\tiny 2}] (F2-T0-V-1) {} 
            child { node[treenode,label={left:$n$}] (F2-T0-V-2) {} }}
            child [missing]
            child { node[treenode,label={below:\tiny$n-1$}] (F2-T0-V-3) {}
            };
        \node (dots) at (0.175cm, -0.66*\yoffset) {\tiny $\cdots$};
    }
},
    ]

\def\rowysep{2.5}
\def\ycoord{0}
\def\xsep{3.5}

\pic (f0) at (0,0) {forest0};

\def\ycoord{\rowysep}

\def\xsep{3.5}
\pic (f3) at (-0.8*\xsep,\ycoord) {forest3};
\pic (f2) at (0, \ycoord) {forest2};
\pic (f1) at (0.8*\xsep, \ycoord) {forest1};


\def\ycoord{2*\rowysep}
\pic (one) at (0, \ycoord) {one};

\begin{pgfonlayer}{background}

\draw[gray] (f0-root) --  (f1-root);
\draw[gray] (f0-root) --  (f2-root);
\draw[gray] (f0-root) --  (f3-root);
\foreach \f in {f1,f2,f3}{
    \draw[gray] (\f-root) -- (one-root);
}
\def\circr{12mm}

\def\boxw{1.75cm} 
\def\boxh{1.5cm} 
\def\cornerr{10pt} 

\foreach \f in {f0,f1,f2,f3}{
  \draw[gray,rounded corners=\cornerr, fill=white]
    ($(\f-root)+(-0.5*\boxw,-0.5*\boxh) - (0mm, 2mm)$) rectangle
    ($(\f-root)+(0.5*\boxw,0.5*\boxh)$);
}
\draw[gray, rounded corners =0.65*\cornerr, fill=white] 
($(one-root)+(-0.25*\boxw,-0.25*\boxh) - (0mm, 1mm)$) rectangle
    ($(one-root)+(0.25*\boxw,0.25*\boxh)$);

\end{pgfonlayer}

\end{tikzpicture}}
    \caption{  For each $n\ge 3$, 
   $\Pi(n)$ contains a copy of the diamond lattice. }
\label{Fig:diamond_lattice}
\end{figure}
  \end{proof}
  
For $n=0, 1, 2$, the lattices $\Pi(n)$ are distributive, as can easily be checked by inspection.
On the other hand, all intervals $[P', P]$ (with $P' < P$  priority forests) are  distributive.

\begin{lem} 
\label{intervales_are_distributive} Let $P' < P$ be priority forests. Then, the interval   $[P', P]$ together with the meet and joint operations, is a distributive lattice. 

\end{lem}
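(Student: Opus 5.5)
The plan is to identify $[P',P]$ with a lattice of subsets that is closed under union and intersection, since any such family is a distributive lattice. Meet and join of priority forests are given by intersection and union of edge sets, as described in Section~\ref{se:priority_lattice_def}. Set $D = E(P)\setminus E(P')$ and send each $Q \in [P',P]$ to $E(Q)\setminus E(P') \subseteq D$. This map is injective and order preserving in both directions, because the order is containment of edge sets. It therefore suffices to show that whenever $Q_1,Q_2\in[P',P]$, the forests with edge sets $E(Q_1)\cup E(Q_2)$ and $E(Q_1)\cap E(Q_2)$ are again priority forests lying in $[P',P]$. Distributivity then follows from the distributivity of $\cup$ and $\cap$.

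Next I will characterise which edge subsets give priority forests. Any subset $S\subseteq E(P)$ defines an increasing forest, since a subforest of an increasing forest is increasing when roots are the vertices with no parent edge. The priority condition is that every tree has an interval vertex set. For a subforest of an increasing forest $P$, this is equivalent to the following condition, in the spirit of the proof of Lemma~\ref{lem:priority traversal => Jordan Holder}: for every edge $(p,v)\in S$ (parent $p<v$), the interval $[p+1,v]$ contains no root of $S$, that is, every $x$ with $p<x\le v$ has its parent edge in $S$. Conversely, if this fails for some $x$, then $p$ and $v$ lie in one tree containing a vertex below the root $x$ and another above it. This contradicts the interval property, exactly as in the proof of Lemma~\ref{lem:shifted_parent_map}.

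With this criterion the closure properties become short. For the union, take an edge $(p,v)\in E(Q_1)$. Every $x\in[p+1,v]$ has its parent edge in $E(Q_1)$, hence in the union. So the union of the edge sets is again a priority forest, and it lies below $P$ because both edge sets are contained in $E(P)$. For the intersection, take an edge $(p,v)$ in $E(Q_1)\cap E(Q_2)$. Each $x\in[p+1,v]$ has its parent edge in both $E(Q_1)$ and $E(Q_2)$. Because the parent of $x$ in $P$ is unique, these two edges coincide, so the edge lies in the intersection. Both forests contain $E(P')$, so the resulting forests lie in $[P',P]$.

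The main obstacle is justifying the edge criterion for the priority condition carefully, particularly its sufficiency. Given the criterion, I must show each tree's vertex set is an interval. I will argue as follows. Let $r$ be a root of $S$ and let $r'$ be the next root. I claim every vertex in $[r,r'-1]$ belongs to the tree of $r$. Proceed by induction on $x$ in this range. The vertex $x$ has a parent $p<x$. The criterion forbids a root in $[p+1,x]$, so $p\ge r$, and by induction $p$ lies in the tree of $r$. Hence $x$ does too.
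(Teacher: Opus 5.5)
Your proof is correct and takes the same route as the paper. Since the order is edge containment, $[P',P]$ is a ring of sets, and hence distributive, once it is closed under union and intersection of edge sets. The paper only asserts this closure, leaning on the description of $\wedge$ and $\vee$ in Section~\ref{se:priority_lattice_def}, and that description fails in $\Pi(n)$ at large. For example, the priority trees with edges $\{0,1\},\{1,2\},\{0,3\}$ and $\{0,1\},\{0,2\},\{0,3\}$ have meet with edge set $\{\{0,1\}\}$, not their intersection. Your criterion (no root of $S$ in $[p+1,v]$ for each edge $(p,v)\in S$), combined with uniqueness of parents in the single forest $P$, is what actually proves the closure.
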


\begin{proof}
When we restrict to a proper ideal, the meet and join operations restrict to the union and intersection of the respective edge sets. On the other hand, the  union and intersection satisfy the distributive laws.
\end{proof}

We now compute the Möbius functions of the proper intervals of $\Pi(n)$. To this end, we recall some definitions.
Let $C$  be $s = s_0 < s_1 < \dots < s_k = t$ be a maximal chain of an interval $[s, t]$ of a poset $P$, let $\lambda$ be an edge labeling for $P$. Set
\[
\lambda(C) = (\lambda(s_0, s_1), \lambda(s_1, s_2), \dots, \lambda(s_{k-1}, s_k)).
\]

The chain $C$ is \emph{increasing} if
$
\lambda(s_0, s_1) \le \lambda(s_1, s_2) \le \dots \le \lambda(s_{k-1}, s_k)
$, and \emph{ascent-free} if
$
\lambda(s_0, s_1) \not< \lambda(s_1, s_2) \not< \dots \not< \lambda(s_{k-1}, s_k).
$
We say that the edge labeling is an \emph{ER-labeling}   if every interval $[s, t]$ has exactly one increasing maximal chain $m$.
Moreover, we say that an ER-labeling is an \emph{EL-labeling} if
in each interval, the unique increasing maximal chain also precedes every other chain in lexicographic order. Finally, we define 
\[
\mathcal{AF}_{[x,y]}= \{C \ | \ C 
\text{ is an ascent-free maximal chain in } 
[x, y]\}.
\]

 \begin{lem} The $P$ be a priority forest. The edge labeling of $I_P=[\hat0,P]$ defined by the Jordan-Hölder permutations is an EL-labeling.
 \end{lem}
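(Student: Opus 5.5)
The plan is to work with an arbitrary interval $[Q,Q']$ of $I_P$, where $Q\le Q'\le P$ are priority forests, since every interval of $I_P$ has this form. Set $D=E(Q')\setminus E(Q)$. By Lemma \ref{Phi_graded} and the description of cover relations, a maximal chain of $[Q,Q']$ adds the edges of $D$ one at a time, and every intermediate forest must be a priority forest.

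Each edge of $D$ receives as label its larger endpoint, which is its child in $Q'$. These children are pairwise distinct, because a vertex has at most one parent. So the Jordan--Hölder word of a maximal chain is a permutation of a fixed set $L$ of $|D|$ labels, and it determines the chain. Two consequences follow immediately:
\begin{itemize}
\item at most one chain can be increasing, namely the one reading $L$ in increasing order;
\item if that chain exists, it is lexicographically smaller than every other chain, since the sorted arrangement of $L$ is the lexicographic minimum among its permutations.
\end{itemize}
The whole proof therefore reduces to \emph{existence}. We must show that adding the edges of $D$ in increasing order of their child labels produces a priority forest at every step.

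For existence, fix $c$ and let $R$ be $Q$ together with the edges of $D$ whose child is $\le c$. The forest $R$ is increasing and lies between $Q$ and $Q'$ as an edge set, so it suffices to show that every component of $R$ has an interval vertex set.

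Take a tree of $Q'$ with vertex set $[a,b]$. The trees of $Q$ inside it are consecutive subintervals with roots $a=r_1<r_2<\dots<r_s$. The edges of $D$ in this block are exactly the edges joining each $r_j$ ($j\ge2$) to its parent in $Q'$, and that parent is smaller than $r_j$.

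The key observation is that the roots attached in $R$ are precisely those $r_j\le c$, and these form an initial segment $r_2,\dots,r_t$ of the list. Each such $r_j$ hangs below a vertex smaller than itself. By induction on $j$, that vertex already lies in the component of $a$, because it belongs to an earlier $Q$-block and all earlier roots are attached. Hence the component of $a$ in $R$ is $[a,r_{t+1}-1]$. The remaining $Q$-blocks, with roots $r_{t+1},\dots,r_s$, stay separate components of $R$, and each is an interval. The order condition between trees is inherited from the intervals being consecutive, so $R$ is a priority forest.

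The main obstacle is exactly this existence step: checking that the "initial segment" phenomenon holds, so that attaching low-labelled roots never leaves a gap. The argument above handles it. Everything else, namely uniqueness and lexicographic minimality, is formal once the labels in $[Q,Q']$ are seen to be distinct.
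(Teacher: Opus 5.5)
Your proof is correct and rests on the same idea as the paper's: the chain that processes the edges in increasing order of their child labels is the unique increasing one, and uniqueness and lexicographic minimality are automatic because every Jordan--H\"older word is a permutation of a fixed label set. The paper's proof differs in scope. It builds this chain top-down, and only for $[\hat 0,P]$ itself, by deleting the parent edge of the largest remaining non-root vertex, and it asserts ``by construction'' that each intermediate forest is a priority forest. You instead treat an arbitrary interval $[Q,Q']$, as the definition of an EL-labeling requires, and your initial-segment argument actually verifies the intermediate forests, so it fills in what the paper leaves implicit.
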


 \begin{proof} Let $L$ be the set of all non-root notes of $P$.
We construct a maximal chain  $C$ of $I_P$  starting from $P$ and successively deleting the edge joining the largest available vertex with its parent, until no edge remains.
By construction $\lambda(C)$ is the identity permutation of $\SS_L$, and $C$ is a increasing chain. 
Since the labeling of maximal chains is always a permutation of $\SS_L$,  there is a unique increasing chain of $I_P$, and the second condition in the definition of EL-labeling
follows  trivially.
\end{proof}

Let \(P < P'\) be priority forests, and let \(e \in P' \setminus P\). We say that \(e\) is \emph{\(P\)-removable} if removing \(e\) from \(P'\) yields a priority forest. Otherwise, \(e\) is called a \emph{\(P\)-ascent} of $P'$. In Lemma~\ref{lem:char_asc_free_general_interval}, we show that \(e\) is a \emph{\(P\)-ascent} if and only if it forces the presence of an ascent in the Jordan--Hölder word of every maximal chain in \([P, P']\). When $P=\hat0$, we simply refer to the edge as either removable or as an ascent, omitting the prefix.

 \begin{lem} 

  \label{lem:char_asc_free_general_interval}
Let $P<P'$ be priority forests. The presence of a \(P\)-ascent in \(P'\) forces an ascent in the Jordan--Hölder permutation of every maximal chain in \([P, P']\).
 \end{lem}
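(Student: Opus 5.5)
The plan is to work directly with how a maximal chain of $[P,P']$ reads off its labels. A maximal chain $C: P=Q_0\lessdot Q_1\lessdot\cdots\lessdot Q_k=P'$ adds the edges of $E(P')\setminus E(P)$ one at a time, and each step records the larger endpoint (child) of the added edge. Reading the chain downward from $P'$, the last label $\lambda(Q_{k-1},Q_k)$ is the child of an edge whose removal from $P'$ leaves the priority forest $Q_{k-1}\ge P$; that edge is $P$-removable. So the first step is to characterize the $P$-removable edges among those in $E(P')\setminus E(P)$.

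The characterization I will prove is the following. An edge $e=\{p,c\}$ with child $c$ is $P$-removable if and only if $c$ is the largest vertex of its component tree $T$ in $P'$, i.e.\ $T$ has vertex set $[a,b]$ with $b=c$. Equivalently, removing $e$ splits $T$ into the intervals $[a,c-1]$ and $[c,b]$. To prove this, note that removing $e$ cuts $T$ into the subtree rooted at $c$ and the rest. Both pieces must be intervals, exactly as in the proof of Lemma~\ref{lem:priority traversal => Jordan Holder}. Since $T$ is increasing, this happens precisely when every vertex of $T$ larger than $c$ is a descendant of $c$. So $e$ is $P$-removable exactly when the descendants of $c$ (together with $c$) form the final segment $[c,b]$.

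Now suppose $e=\{p,c\}$ is a $P$-ascent. Then there is a vertex $x>c$ in $T$ that is not a descendant of $c$. Take a maximal chain $C$ in $[P,P']$. I will argue that, at the moment $e$ is added, $x$ is not yet connected to $c$'s side. The reason is that in $Q_{j}$, the forest just before $e$ is added, $c$ is a root. Since $Q_j$ is a priority forest, $c$ is a root whose tree is an interval starting at $c$, while all vertices below $c$ lie in earlier trees. The vertex $x$ lies in $T$ in $P'$ but not below $c$, so its path to the root of $T$ avoids $c$. Some edge $f$ on the path from $x$ up to the least common ancestor of $x$ and $c$ must be added after $e$. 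Otherwise $x$ would lie in the same tree of $Q_j$ as a vertex smaller than $c$ (its ancestor in the tree containing $p<c$), while $x>c$ and $c$ is a root. That tree would then fail to be an interval, which is impossible.

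Choose $f$ to be the first such edge added after $e$; its label is its child $y$. Every ancestor of $x$ on that path that is greater than $c$ is a candidate. The vertex $y$ is either an ancestor of $x$ or $x$ itself. I need $y>c$ so that the label of $e$ followed later by $y$ produces an ascent. The step where I expect resistance is ruling out $y<c$. For this I pick $f$ to be the lowest edge on the path from $x$ toward the common ancestor that is missing in $Q_j$. Its child $y$ then lies in the same $Q_j$-tree as $x$ (or is $x$), and that tree contains $x>c$. Because trees of $Q_j$ are intervals ordered by roots and $c$ is a root, that tree must start at or after $c$, so $y\ge c$, and $y\neq c$. Hence $y>c$, and the labels $c$ and then $y$ appear in increasing order along $C$.

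An ascent in the strict sense requires adjacent labels. Having shown that some later label exceeds $c$, I pass to adjacent positions: in the subsequence from $c$ to $y$, some consecutive pair must increase. That gives the ascent in every maximal chain of $[P,P']$, as claimed.
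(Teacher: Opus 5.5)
Your argument is correct and follows the same idea as the paper's proof: the child $c$ of the $P$-ascent $e$ must be attached before a larger vertex on the other side of $e$ is connected, which gives an increasing pair of labels and hence an adjacent ascent. Your choice of the later label as the $Q_j$-root $y$ of the component of $x$ makes the paper's brief ``label $i$ precedes $j$'' precise, since the parent edge of $x$ may already lie in $P$, so $x$ itself need not occur as a label.

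One slip to fix: your opening claim that $e$ is $P$-removable iff $c$ is the largest vertex of $T$ is false (the edge $\{0,1\}$ of the path $0-1-2$ is removable). It is also not equivalent to the characterization you actually prove and use, namely that $c$ together with its descendants is exactly $[c,b]$.
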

 \begin{proof}
An edge \(e\) is a \(P\)-ascent if removing $e$ from \(P'\) produces a forest whose trees’ labels are no longer integer intervals. In particular, this would imply the existence of nodes \(i < j\) with \(j \in T_k\) and \(i \in T_{k+1}\), where \(T_k\) and \(T_{k+1}\) are consecutive component trees of \(P' \setminus \{e\}\). To avoid this problem  \(i\) should attach to a parent before \(j\). Therefore, in any chain from \(P\) to \(P'\), the label \(i\) precedes \(j\), creating an ascent in the chain labeling.
 \end{proof}
 
  \begin{lem} 
  \label{lem:char_removable}
Let $P<P'$ be priority forests. If all edges of $P\setminus P'$ are removable, then there is exactly one maximal chain in $[P, P']$ whose  Jordan-Hölder permutation  is ascent-free. Moreover, $[P, P']$  is isomorphic to the Boolean lattice of order  $\rank(P')-\rank(P).$
 \end{lem}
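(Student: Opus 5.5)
The plan is to show that \emph{every} subset of the edges in $D = E(P')\setminus E(P)$ can be removed from $P'$ while still leaving a priority forest. Once this holds, the interval $[P,P']$ is exactly $\{P'\setminus S : S\subseteq D\}$, ordered by inclusion of edge sets, which is the Boolean lattice on $k=|D|=\rank(P')-\rank(P)$ elements. (I read the hypothesis ``edges of $P\setminus P'$'' as the edges of $D$.)

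First I give a usable characterisation of removability. Let $e=\{p,c\}$ with $p<c$ be an edge of a priority forest $Q$, and let $[a,b]$ be the vertex set of the tree of $Q$ containing $e$. Removing $e$ splits this tree into the subtree rooted at $c$ and the remainder. As in the proof of Lemma~\ref{lem:priority traversal => Jordan Holder}, the result is a priority forest if and only if the subtree of $c$ has vertex set exactly $[c,b]$. In other words, every vertex of the tree larger than $c$ must be a descendant of $c$.

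The key step, and the only real obstacle, is to show that removability is inherited. Suppose $e=\{p,c\}$ and $f=\{q,d\}$ are both removable from $P'$, and let $P''=P'\setminus\{e\}$. I claim $f$ is still removable from $P''$.
\begin{itemize}
\item If $f$ lies in a different tree of $P'$ than $e$, nothing changes.
\item Otherwise both lie in one tree with vertices $[a,b]$. The subtree of $c$ is $[c,b]$ and the subtree of $d$ is $[d,b]$, so these are nested final segments.
\item If $d>c$, then $d$ lies in the new tree $[c,b]$ and its subtree $[d,b]$ is unchanged, hence still a final segment.
\item If $d<c$, then $c$ descends from $d$. The subtree of $d$ in $P''$ becomes $[d,b]\setminus[c,b]=[d,c-1]$, which is a final segment of the new tree $[a,c-1]$.
\end{itemize}
By induction on $|S|$, removing any $S\subseteq D$ from $P'$ yields a priority forest, and it contains $E(P)$, so it lies in $[P,P']$. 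Conversely, every element of $[P,P']$ is a priority forest $Q$ with $E(P)\subseteq E(Q)\subseteq E(P')$, so it is of this form. Hence $[P,P']$ is Boolean of rank $k$.

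For the chains, note that a maximal chain of a Boolean lattice is an ordering of $D$, and its Jordan--H\"older word lists the larger endpoints of the edges in that order. These endpoints are distinct, since each non-root vertex has a unique parent edge. A word with distinct letters is ascent-free precisely when it is strictly decreasing. Every ordering occurs as a chain, so the unique decreasing ordering gives the unique ascent-free maximal chain.
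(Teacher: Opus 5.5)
Your proposal is correct and follows the same route as the paper. Both arguments show that the edges of $E(P')\setminus E(P)$ can be deleted in any order, so $[P,P']$ is Boolean and the strictly decreasing labelling gives the unique ascent-free chain. Your final-segment characterization of removability and the inheritance argument justify the ``any order'' step, which the paper only asserts.
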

 \begin{proof}  We describe the maximal chains from top to bottom. First, observe that under these conditions, edges can be removed from $P\setminus P'$ in any possible order. Upon tracking the children nodes, all permutations appear. On the other hand, the maximal chain whose Jordan-Hölder permutation is ascent-free is the one obtained after deleting the edge attaching node \(j\) its parent in \(P'\) following the order \(1, 2, \dots, n\). 
 \end{proof}

Let $\pi$ be a parking function with priority forest $P$, and let $e$ be an edge of $P$ with child node $x$. If $e$ is an ascent edge, then the bird’s-eye permutation of $\pi$ has an ascent between positions $x$ and $x+1$. Whereas if $e$ is a removable edge, then the positions $x$ and $x+1$ in the bird’s-eye permutation could be either an ascent or a descent.

After Lemma \ref{lem:char_removable}, we call a priority forest in which every edge is removable a \emph{Boolean forest}. Since siblings nodes imply the existence of ascents, any Boolean forest is necessarily a union of path graphs.  In the language of parking functions, any maximal chain in a Boolean forest corresponds bijectively to a partial parking function in which all cars are lucky.  
More generally, given priority forests \(P < P'\), we say that \(P'\) is a \emph{\(P\)-Boolean} if every edge in \(E(P') \setminus E(P)\) is $P$-removable. A $\hat0$-Boolean forest is a Boolean forest.

A powerful result by Richard Stanley allows us to compute the values of the Möbius function for all interval not containing $\hat1$, see \cite{Stanley_finite_lattice_jordal_holder,  Whitney_twins}.
The values of the Möbius functions  of the principal ideals of $\Pi(2)$ and $\Pi(3)$ appear, respectively, in  Figures \ref{fig:L2} and \ref{fig:Pi3}. 

 \begin{thm}[Stanley, \cite{Stanley_finite_lattice_jordal_holder}]
 \label{Thm:Stanley_mobius}
Let $P$ be a graded poset with an ER-labeling $\lambda$. 
Then for every $x < y$ in $P$,  we have that
$
\mu(x, y) = (-1)^{\rho(y) - \rho(x)} \,
\big|\vspace{1pt} \mathcal{AF}_{[x,y]}\vspace{1pt} |.
$
 \end{thm}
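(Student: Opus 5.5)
We would prove Theorem~\ref{Thm:Stanley_mobius} by combining Philip Hall's chain formula for the Möbius function with an inclusion--exclusion over \emph{descent sets} of maximal chains. Fix $x<y$ and put $r=\rho(y)-\rho(x)$. For a maximal chain $C: x=s_0\lessdot s_1\lessdot\dots\lessdot s_r=y$, say $i\in[r-1]$ is a \emph{descent} of $C$ if $\lambda(s_{i-1},s_i)\not\le\lambda(s_i,s_{i+1})$. Write $D(C)$ for the set of descents. The chains with $D(C)=[r-1]$ are exactly the ascent-free chains in $\mathcal{AF}_{[x,y]}$, provided the weak order used for ``increasing'' and the strict order used for ``ascent-free'' are read consistently. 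For totally ordered labels such as the permutation labels used in this paper, this means $\lambda(s_{i-1},s_i)>\lambda(s_i,s_{i+1})$ at every step. We would state this convention explicitly first.

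\textbf{Step 1 (counting chains by rank set).} For $S=\{a_1<\dots<a_j\}\subseteq[r-1]$, let $\alpha(S)$ be the number of chains $x<t_1<\dots<t_j<y$ with $\rho(t_k)-\rho(x)=a_k$. We claim $\alpha(S)$ equals the number of maximal chains $C$ of $[x,y]$ with $D(C)\subseteq S$. Given such a rank-selected chain, the ER property supplies a unique increasing maximal chain in each segment $[t_{k-1},t_k]$. Concatenating these pieces gives a maximal chain of $[x,y]$ whose descents can only occur at the junction ranks, that is, inside $S$. Conversely, if $D(C)\subseteq S$, then $C$ restricted to each segment between consecutive ranks of $S$ is increasing. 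By uniqueness it is the increasing chain of that segment, so $C$ is recovered from its elements at the ranks in $S$. This bijection is the heart of the argument and the only place the ER hypothesis is used.

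\textbf{Step 2 (inversion).} Define $\beta(S)$ as the number of maximal chains with $D(C)=S$. Then $\alpha(S)=\sum_{T\subseteq S}\beta(T)$, and Möbius inversion on the Boolean lattice gives
\[
\beta(S)=\sum_{T\subseteq S}(-1)^{|S\setminus T|}\alpha(T).
\]
In particular, for $S=[r-1]$,
\[
\sum_{T\subseteq[r-1]}(-1)^{|T|}\alpha(T)=(-1)^{r-1}\beta([r-1]).
\]

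\textbf{Step 3 (Hall's theorem).} Philip Hall's theorem states that $\mu(x,y)=\sum_c(-1)^{\ell(c)}$, where the sum runs over all chains $c$ from $x$ to $y$ and $\ell(c)$ is the length of $c$. A chain whose interior ranks form $T$ has length $|T|+1$, so
\[
\mu(x,y)=-\sum_{T\subseteq[r-1]}(-1)^{|T|}\alpha(T)=(-1)^r\beta([r-1])=(-1)^{r}\big|\mathcal{AF}_{[x,y]}\big|.
\]
This is the claim. The case $r=1$ is immediate: the unique chain is vacuously ascent-free and $\mu=-1$.

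\textbf{Main obstacle.} The main difficulty is Step~1. There one must check carefully that concatenating increasing pieces produces exactly the chains with $D(C)\subseteq S$, with no double counting. This is where the precise convention matching ``increasing'' with ``not a descent'' matters.
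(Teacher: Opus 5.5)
Your proof is correct. The paper gives no argument of its own and only cites Stanley, and what you wrote is precisely the standard proof of the cited result: Hall's chain formula, inversion of the flag $f$-vector over the Boolean lattice, and the ER property identifying $\beta(S)$ with the number of maximal chains whose descent set is $S$.

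Your convention that ascent-free means $\lambda(s_{i-1},s_i)\not\le\lambda(s_i,s_{i+1})$ at every step is necessary, not cosmetic. With the paper's literal $\not<$ the formula fails when consecutive labels coincide: a rank-two interval whose two maximal chains are labeled $(1,1)$ and $(2,1)$ is ER and has $\mu=1$, yet it has two ``ascent-free'' chains. This never occurs for the Jordan--H\"older partial permutations to which the paper applies the theorem.
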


  \begin{lem} 
  \label{lem:Mobius_priority_forests}
  Let $P < P'$ be priority forests. Then,  \[
  \mu(P, P')= 
  \begin{cases}
  (-1)^{\rho(P')-\rho(P)} &\text{ if $P'$ is a $P$-Boolean forest.}\\
  0 &\text{ otherwise}
  \end{cases}
  \]

 \end{lem}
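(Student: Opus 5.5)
Since $[P,P']$ is a finite distributive lattice by Lemma~\ref{intervales_are_distributive}, the cleanest route is not through counting ascent-free chains but through the structure of distributive lattices. The plan is to identify $[P,P']$ concretely. By the description of meets and joins, elements of $[P,P']$ correspond to subsets $S \subseteq E(P')\setminus E(P)$ such that $E(P)\cup S$ is a priority forest. The lattice is a sublattice of the Boolean lattice on $E(P')\setminus E(P)$ that contains both $\emptyset$ and the full set. We then use the standard fact (e.g.\ via the Crosscut Theorem, or Philip Hall) that in a finite lattice, $\mu(\hat0,\hat1)\neq 0$ only if $\hat1$ is a join of atoms. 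For distributive lattices we also use the sharper fact that $\mu(x,y)=(-1)^k$ when $[x,y]$ is Boolean of rank $k$, and $0$ otherwise.

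\textbf{Step 1: the $P$-Boolean case.} If $P'$ is $P$-Boolean, Lemma~\ref{lem:char_removable} gives $[P,P']\cong$ the Boolean lattice of rank $\rho(P')-\rho(P)$. Hence $\mu(P,P')=(-1)^{\rho(P')-\rho(P)}$. Alternatively, Theorem~\ref{Thm:Stanley_mobius} applies directly, since the edge labeling restricts to an ER-labeling of $[P,P']$: it is the same argument as for $I_P$, namely delete edges from $P'$ largest child first, and the labels form a partial permutation, so the increasing chain is unique. Lemma~\ref{lem:char_removable} also provides exactly one ascent-free chain.

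\textbf{Step 2: the non-Boolean case.} Suppose some edge $e\in E(P')\setminus E(P)$ is a $P$-ascent. I would use Theorem~\ref{Thm:Stanley_mobius}, which needs an ER-labeling of $[P,P']$. I would check that the argument of the preceding lemma carries over verbatim to intervals $[P,P']$: the greedy top-down deletion of the edge with largest child yields a valid chain, since removing the edge to the largest non-$P$ child keeps interval vertex sets, by the same reasoning as Lemma~\ref{lem:priority traversal => Jordan Holder}. By Lemma~\ref{lem:char_asc_free_general_interval}, every maximal chain of $[P,P']$ has an ascent in its Jordan--Hölder word. Thus $\mathcal{AF}_{[P,P']}=\emptyset$, and Stanley's theorem gives $\mu(P,P')=0$.

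The main obstacle is making Lemma~\ref{lem:char_asc_free_general_interval} rigorous, i.e.\ showing that a $P$-ascent genuinely forces an ascent. As a backup that avoids it, I would argue lattice-theoretically. Every atom of $[P,P']$ is $P\cup\{f\}$ for an edge $f$ that is addable to $P$. In a distributive lattice, $\mu(P,P')\ne0$ forces $P'$ to be the join of atoms, and then the interval is Boolean. It remains to show that the interval being Boolean is equivalent to every edge of $E(P')\setminus E(P)$ being $P$-removable from $P'$. For this I would show the following: if all those edges are addable to $P$ independently in any combination, then each can be removed from $P'$. This holds because removing $e$ from $P'$ gives the join of the atoms for the other edges, which is a priority forest.
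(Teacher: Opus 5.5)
This is correct and is essentially the paper's argument: restrict the Jordan--H\"older labeling to an ER-labeling of $[P,P']$ and its subintervals, get exactly one ascent-free chain from Lemma~\ref{lem:char_removable} when $P'$ is $P$-Boolean and none from Lemma~\ref{lem:char_asc_free_general_interval} otherwise, and apply Theorem~\ref{Thm:Stanley_mobius}. Your distributive-lattice backup is also valid, and the ascent lemma you were worried about is easy: if deleting the edge into $c$ from $P'$ breaks the interval property, then some $i>c$ has its parent below $c$; that edge cannot lie in $P$, so every chain must add the edge into $c$ before the edge into $i$, which forces an ascent.
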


  \begin{cor} 
  \label{cor:Complete_Mobius}
  Fix $n\ge1$, then 
 $\mu(\hat0,\hat1)=0$. 
 \end{cor}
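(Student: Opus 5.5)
The plan is to use the defining recursion $\mu(\hat0,\hat1) = -\sum_{x<\hat1}\mu(\hat0,x)$, where $x$ runs over all priority forests of $\Pi(n)$. By Lemma~\ref{lem:Mobius_priority_forests} with $P=\hat0$, the value $\mu(\hat0,x)$ is $(-1)^{\rho(x)}$ when $x$ is a Boolean forest and $0$ otherwise. For $x=\hat0$ this is consistent, since $\hat0$ is vacuously Boolean and $\mu(\hat0,\hat0)=1$. Hence
\[
\mu(\hat0,\hat1) = -\sum_{B} (-1)^{|E(B)|},
\]
where $B$ ranges over the Boolean forests labeled with $[n]_0$. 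The whole task is to show that this signed count vanishes for $n\ge 1$.

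The first step is to describe Boolean forests explicitly. I expect them to be exactly the forests whose edges are all small ascents $\{i-1,i\}$, i.e.\ each tree is a path $a,a+1,\dots,b$ on an interval.

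The second step is to prove this characterization. Suppose $B$ has an edge whose child $j$ has parent $p<j-1$, and take $j$ minimal. Then $j-1$ lies in the same tree with $p\le j-1$. If $j-1$ is not a descendant of $j$, then deleting the edge to $j$ leaves the vertex set of $j$'s subtree starting at $j$, while $j-1$ remains in the part containing $p$. This looks fine at first, so I would instead use the remark in the paper that sibling nodes force ascents: a node with two children $c<c'$ makes the edge to $c$ an ascent, since removing it separates $c$'s subtree from vertices larger than $c$ that stay with the parent. So every Boolean forest is a union of increasing paths. In an increasing path on an interval that starts at the root $a$, the vertices must be $a,a+1,\dots,b$ in order, so every edge is a small ascent. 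Conversely, removing any edge of such a path splits it into two intervals, so the result is a priority forest and the forest is Boolean. This matches the remark in the text that Boolean forests are unions of path graphs.

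The third step is to count. Boolean forests correspond to subsets $S$ of the $n$ possible small-ascent edges $\{i-1,i\}$, $i\in[n]$, with $|E(B)|=|S|$. Therefore
\[
\sum_B (-1)^{|E(B)|}=\sum_{S\subseteq[n]}(-1)^{|S|}=(1-1)^n=0 \quad\text{for } n\ge1,
\]
which gives $\mu(\hat0,\hat1)=0$.

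The main obstacle is the characterization in the second step, specifically showing that no other shape is Boolean. The sibling argument handles branching, and the increasing-interval argument handles non-adjacent parent–child edges along a path.
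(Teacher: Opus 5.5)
Your proof is correct and follows the paper's argument. Both reduce $\mu(\hat0,\hat1)=-\sum_{x<\hat1}\mu(\hat0,x)$, via Lemma~\ref{lem:Mobius_priority_forests}, to a signed count of Boolean forests, and these are exactly the subforests of the increasing path $0,1,\dots,n$. Your explicit characterization (a vertex with two children forces a non-removable edge, and an increasing path on an interval has only small ascents) and your evaluation $(1-1)^n=0$ make precise what the paper states tersely: the Boolean forests form the Boolean interval below the increasing path, and the Möbius values over that interval sum to zero.
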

 \begin{proof}
 As the only priority forests with nonzero Möbius function are the Boolean forest, and the only corank one Boolean forest is the increasing path graph, and  since $\hat 1$ covers all priority forests, the sum of the Möbius values of all priority trees is equal to zero. Therefore, $\mu(\hat0,\hat1)=0$ 
 \end{proof}

It only remains to compute the value of the Möbius function on intervals that contain the top element. This final calculation is carried out using a sign-reversing involution.

\begin{lem}
\label{lem:mobius_upper_intervals}
    Let $P $ be a priority forest. 
        If $P$ is a tree, then $ \mu(P,\hat 1)=-1$. Otherwise, $P = T_0 \ T_1 \ldots T_\ell$ with $\ell\ge1$, and  $ \mu(P,\hat 1)$  is equal to 
    \[
    \mu(P,\hat 1)  = (-1)^{\corank P} \prod_{k=0}^{\ell-1} e(T_k) 
    \]
    where we write $e(T)$ for the number of edges of $T$.

\end{lem}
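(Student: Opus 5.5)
The plan is to compute $\mu(P,\hat 1)$ from the defining recursion of the Möbius function and to evaluate the resulting sum with Lemma~\ref{lem:Mobius_priority_forests}. Since every element of $[P,\hat 1)$ is a priority forest $Q\ge P$, we have
\[
\mu(P,\hat 1) = -\sum_{P\le Q<\hat 1}\mu(P,Q) = -\sum_{Q} (-1)^{\rho(Q)-\rho(P)},
\]
where the last sum runs over the $P$-Boolean forests $Q\ge P$, together with $Q=P$ (which contributes $\mu(P,P)=1$). So everything reduces to describing the $P$-Boolean forests above $P=T_0\,T_1\ldots T_\ell$ and then doing a signed count. This is where the sign-reversing cancellation announced before the statement happens: the cancellation is gap by gap. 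If $P$ is a tree, the only such $Q$ is $P$ itself, so $\mu(P,\hat 1)=-1$ immediately.

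\textbf{Step 1 (structure of $Q\ge P$).} Any priority forest $Q\ge P$ has edge set $E(P)$ plus some extra edges. Each component tree of $Q$ is a union of a block of consecutive trees $T_a,\dots,T_b$ of $P$, because the vertex sets must be intervals. Since $Q$ is increasing, each extra edge joins the root $r_{k+1}$ of $T_{k+1}$ (for some $a\le k<b$) to a vertex of $T_a\cup\cdots\cup T_k$. Thus $Q$ is encoded by choosing a subset $S\subseteq\{0,\dots,\ell-1\}$ of \emph{gaps} to close, and, for each $k\in S$, an attachment vertex for $r_{k+1}$.

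\textbf{Step 2 (characterizing $P$-Boolean forests; the main obstacle).} I claim that $Q$ is $P$-Boolean if and only if, for every $k\in S$, the root $r_{k+1}$ is attached to a vertex of $T_k$ itself.

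For sufficiency, suppose every new edge goes into the immediately preceding tree. Removing the edge at gap $k$ cuts off the subtree of $r_{k+1}$. By induction along the block, this subtree consists exactly of $T_{k+1},\dots,T_b$, and what remains is $T_a,\dots,T_k$. Both pieces have interval vertex sets, so the result is a priority forest and the edge is $P$-removable.

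For necessity, suppose $r_{k+1}$ is attached to a vertex of some $T_j$ with $j<k$ inside the same block. Then gap $k-1$ is also closed, and I consider removing the edge entering $r_k$. The subtree of $r_k$ does not contain $T_{k+1}$, so the remaining tree contains labels on both sides of $T_k$ and is not an interval. Hence that edge is not removable and $Q$ is not $P$-Boolean.

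This characterization is the step requiring care; the rest is bookkeeping.

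\textbf{Step 3 (signed count).} By Step 2, the choices at different gaps are independent. At gap $k$ we either leave the gap open, contributing weight $1$, or attach $r_{k+1}$ to one of the $|T_k| = e(T_k)+1$ vertices of $T_k$, each such choice adding one edge and hence contributing weight $-1$. The sum therefore factors as
\[
\sum_{Q}(-1)^{\rho(Q)-\rho(P)}=\prod_{k=0}^{\ell-1}\bigl(1-(e(T_k)+1)\bigr)=(-1)^{\ell}\prod_{k=0}^{\ell-1}e(T_k).
\]
Combining this with the recursion gives
\[
\mu(P,\hat 1)=(-1)^{\ell+1}\prod_{k=0}^{\ell-1}e(T_k),
\]
and $\ell+1=\corank P$, as claimed.

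Equivalently, the involution toggling, at the first gap $k$ with $e(T_k)=0$, between "open" and "attached to the unique vertex of $T_k$" cancels terms in pairs. The survivors are then matched with the product $\prod_k e(T_k)$ of non-root attachment choices. I would present the factorization above, since it is cleaner.
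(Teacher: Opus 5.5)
Your proof is correct. It shares the paper's skeleton: the recursion $\mu(P,\hat 1)=-\sum_{P\le Q<\hat 1}\mu(P,Q)$ together with Lemma~\ref{lem:Mobius_priority_forests}. Where you differ is in how you evaluate the sum.

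The paper uses a sign-reversing involution on the forests above $P$. At the first gap where it is possible, it toggles the edge joining the roots of $T_{k-1}$ and $T_k$: it removes the edge if present, and adds it if absent and no cycle is created. The fixed points are trees. The $P$-Boolean fixed points are obtained by attaching each root of $T_k$ to a non-root vertex of $T_{k-1}$, so there are $\prod_k e(T_k)$ survivors, all of sign $(-1)^{\ell}$.

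You instead prove directly that $Q\ge P$ is $P$-Boolean if and only if every closed gap $k$ has $r_{k+1}$ attached inside $T_k$. You then factor the signed sum as $\prod_{k=0}^{\ell-1}(1-|T_k|)$.

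What your route buys:
\begin{itemize}
\item It states and proves the characterization of $P$-Boolean forests. The paper uses this only implicitly, both when it asserts that its involution preserves $P$-Booleanness and when it counts the Boolean fixed points.
\item The factorization makes the formula transparent, including why it vanishes when some $T_k$ with $k<\ell$ is trivial.
\end{itemize}
What the paper's route buys is a concrete set of trees exhibited as the surviving terms.

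Your closing remark about an equivalent involution is wrong as stated. Toggling only at the first gap with $e(T_k)=0$ is the identity whenever every $T_k$ with $k<\ell$ is nontrivial. In that case nothing cancels, and the survivors do not all have the same sign, so they cannot be matched with $\prod_k e(T_k)$. The involution that works toggles between ``open'' and ``$r_{k+1}$ attached to the root of $T_k$'' at the first gap $k$ in either of those two states, which is exactly the paper's involution. Since you present the factorization as your actual proof, this slip does not affect correctness.
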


\begin{proof} If $P$ is a tree, the result is immediate. 
Otherwise, as $\hat1$ it the top element, we need to compute the sum of the values of  $\mu(P,P')$ for all  all priority trees $P'\ge P,$ and reverse the sign of the result.
Let $P = T_0 \ T_1 \ldots T_\ell$, $\ell\ge1$, and define \(e_k\) as the edge joining the roots of consecutive component trees \(T_{k-1}\) and \(T_k\).

Define an involution on the set of priority trees in the interval $[P,\hat{1}]$ as follows. Let $P' \in [P,\hat{1}]$.
If $e_1$ is an edge of $P'$, we remove $e_1$. If $e_1$ is not an edge of $P'$ and adding $e_1$ does not create a cycle, we add $e_1$. If neither case applies, we repeat the same procedure with $e_2$, and if necessary continue successively with $e_3, \ldots, e_\ell$ until one of these two operations can be carried out. If no such edge exists, then $P'$ is a fixed point of the involution.

Since non-removable edges induce ascents in its Jordan-Hölder permutations, forcing the Möbius function to vanish. The involution, on the other hand, affects only removable edges. Hence, it maps $P$-Boolean forests to $P$-Boolean forests and no $P$-Boolean forests to no $P$-Boolean forests. Finally, since forests in an orbit differ by exactly one edge, this reverses the parity and makes the involution sign-reversing.
Finally, observe that the fixed points of the involution are  trees, as otherwise there would be two consecutive roots that could potentially be attached at their roots. 

Let $T$ be a fixed point of the involution. 
If  $T$ is not a $P$-Boolean forest, we ignore it as $\mu(T,\hat1)=0$. On the other hand, if $T$ is a $P$-Boolean forests, then $\mu(P,T)=(-1)^{\corank P -1}$, implying that there are no further cancellations. To compute the value of the Möbius function \(\mu(P,\hat{1})\), count them and change the sign of the result.
There are a total of 
$
\prod_{k=0}^{\ell-1} \big(|T_k|-1\big)
$
of them,  as they can be constructed by attaching the root of \(T_k\) to any non-root node of \(T_{k-1}\) for each \(k = 1, 2, \ldots, \ell\).
\end{proof}

    \begin{figure}[h!]
    \centering
    \resizebox{0.8\textwidth}{!}{\input{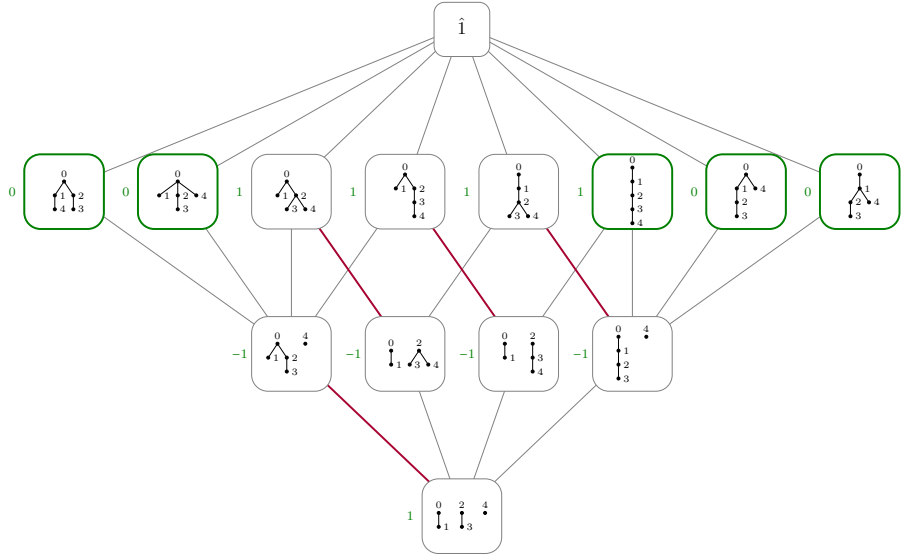}}
    \caption{An example of the sign-reversing involution with its  fixed points in green, and the two-orbits linked with crimson edges. }
\label{Fig:involution}

\end{figure}

\begin{ex}
Figure~\ref{Fig:involution} illustrates the sign-reversing involution on an upper interval of $\Pi(5)$. Fixed points are shaded, and $2$-orbits are indicated by dark lines. 
\end{ex}
Some immediate consequences of Lemma \ref{lem:mobius_upper_intervals}.
Assume that in all constructions that we will describe labels are chosen so as to produce a priority forest, and recall that trees in $\Pi(n)$ have $n$ edges.  The Möbius function  $\mu(P,\hat 1)=0$ if and only if  $F$ contains a component tree $T_k =\circ$ with $k\ne \ell$.
Moreover, for each $k \in [n-1]$, there exists a priority forest $P = T_0 \, T_1 $ of corank 2 (and thus, $n-1$ edges) with $\mu(P, \hat{1}) = k$. For example, if $T_0$ is a tree with  $k$ edges and $T_1$ has $n-k-1$ edges. 
More generally, for each $k \in [n-m]$, there exists a priority forest $P = T_0 \, T_1 \, T_2 \,\ldots  \,  T_m$ of corank $m + 1$ with $\mu(P, \hat{1}) = (-1)^{m+1} \ k$. For example, for any priority forest with three components $T_0$, $T_1$, and $T_2$ with, respectively, $k$, one, and $n-m$ edges. 
Finally, observe that the AM–GM inequality implies that the signless Möbius function of a priority forest $P = T_0 \, T_1 \, T_2 \,\ldots  \,  T_{n-m}$ of corank $n - m + 1$  satisfies that
\[
\left|  \hspace{2pt} \mu(P, \hat{1}) \hspace{1pt} \right| = 
e(T_0)e(T_1)\ldots e(T_{n-m}) \le \left(\frac{m}{n-m+1}\right)^{n-m+1}. 
\]

We conclude this section  with a brief remark on the topology of the order complex associated with intervals in the priority lattice.
Let $P<P'$ be priority forests.  If $P'$ is a $P$-Boolean forest, then the order complex $\Delta((P,P'))$  has the homotopy type of a sphere of dimension $\rank(P')-\rank(P)-2$. However, if $P'$ is not a $P$-Boolean forest, then  $\Delta((P,P'))$ has the homotopy type of a point. These observations are an immediate consequence of the following result by Anders Björner and Michelle Wachs \cite{Bjorner_Wachs_lex_shellable}.

 \begin{thm}[Björner and Wachs \cite{Bjorner_Wachs_lex_shellable}]
Let $P$ be a graded poset with an ER-labeling $\lambda$. For  $x < y$ fixed.
Then, the order complex $\Delta((x, y))$ is shellable. Moreover, it has the homotopy type of a
wedge of 
$| \vspace{1pt} \mathcal{AF}_{[x,y]}| \vspace{1pt}$
 spheres each of dimension $\rho([x, y]) - 2$. As a consequence, $[x, y]$ is Cohen-Macaulay.
Finally, $\mathcal{AF}_{[x,y]}$
forms a basis for the top reduced cohomology $\widetilde{H}^{\rho([x,y])-2}(\vspace{1pt} (x, y) \vspace{1pt})$ of $\Delta((x, y))$.

 \end{thm}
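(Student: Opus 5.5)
The plan is to run the standard lexicographic-shellability argument, replacing the lexicographic order on maximal chains by an order manufactured from the ER-labeling alone. Fix $x<y$ and let $r=\rho(y)-\rho(x)$. The facets of $\Delta((x,y))$ are the maximal chains $C: x=s_0\lessdot s_1\lessdot\cdots\lessdot s_r=y$ of $[x,y]$ with the endpoints removed. Each such $C$ has a label word $\lambda(C)$ and a descent set
\[
D(C)=\{\,i : \lambda(s_{i-1},s_i)\not<\lambda(s_i,s_{i+1})\,\}\subseteq[r-1].
\]
The goal is a total order $\prec$ on facets with two properties. First, for each $C$, the faces of $C$ already covered by earlier facets are exactly the faces missing at least one vertex $s_i$ with $i\in D(C)$. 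Second, the restriction face of $C$ is then $\{s_i : i\in D(C)\}$.

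The first key step is a local rewriting move. If $i\in D(C)$, the rank-two interval $[s_{i-1},s_{i+1}]$ has a unique increasing chain by the ER property. This chain differs from $C$ at position $i$, because $C$ is not increasing there. Replacing $s_i$ by its middle element gives a chain $C^{(i)}$ that shares with $C$ the codimension-one face $C\setminus\{s_i\}$. Iterating, every chain can be driven to the increasing chain of $[x,y]$, and more generally every chain can be driven to the increasing chain of any subinterval $[s_a,s_b]$ of itself. I then declare $C'\to C$ whenever $C'$ arises from $C$ by straightening some subinterval of $C$ to its increasing chain. I take $\prec$ to be a linear extension of the transitive closure of these moves.

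Granting that $\prec$ exists, the shelling verification is routine. Let $C'\prec C$ and put $F=C\cap C'$. Take a maximal segment $s_a<\cdots<s_b$ of $C$ avoided by $C'$, with $s_a,s_b\in F$. If $C$ were increasing on $[s_a,s_b]$, uniqueness of increasing chains would give a contradiction with $C'\prec C$. So some $i\in D(C)$ lies strictly between $a$ and $b$. Then $C^{(i)}\prec C$ and $F\subseteq C\cap C^{(i)}=C\setminus\{s_i\}$, which is the shelling condition. The homotopy type follows from the standard theory: the facets whose restriction face is the whole facet are exactly those with $D(C)=[r-1]$, that is, the ascent-free chains $\mathcal{AF}_{[x,y]}$. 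A shellable pure complex of dimension $r-2$ is a wedge of one $(r-2)$-sphere per such facet. The same count recovers Theorem~\ref{Thm:Stanley_mobius} as a consistency check. The cohomology basis comes from the cocycles dual to these homology facets. Cohen--Macaulayness follows because every open interval inherits an ER-labeling, hence is shellable in the same way.

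The main obstacle is the first step: showing that the relation ``$C'$ is obtained from $C$ by straightening a subinterval'' is acyclic, so that a linear extension $\prec$ exists. Under an EL-labeling this is automatic, since straightening strictly decreases the word lexicographically. With only ER, that argument is unavailable. I would first look for a potential function that strictly decreases under every straightening, for instance the multiset of labels read with positions weighted, or the number of inversions of $\lambda(C)$ when the labels of each interval form a fixed multiset. If no such monovariant can be found in general, I would fall back on proving the statement in the form actually used in this paper. For the proper intervals of $\Pi(n)$, the Jordan--H\"older labeling is already an EL-labeling, so the lexicographic order serves directly as $\prec$ and the remaining steps go through unchanged.
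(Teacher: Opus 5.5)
The step you flag as the main obstacle cannot be completed. Under an ER-labeling alone the straightening relation can contain directed cycles, and the statement itself is false with ER in place of EL. The paper gives no proof, only the citation, and Bj\"orner--Wachs assume an EL- or CL-labeling.

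Here is a counterexample. Take the rank-$3$ bounded poset with atoms $a,b_1,b_2$, coatoms $c,d_1,d_2$, and middle covers $a\lessdot c$ and $b_i\lessdot d_j$ for all $i,j\in\{1,2\}$. Label $\hat0\lessdot a\lessdot c\lessdot\hat1$ by $1,2,3$. Put $\lambda(\hat0,b_i)=\lambda(d_j,\hat1)=2$, $\lambda(b_1,d_1)=\lambda(b_2,d_2)=3$ and $\lambda(b_1,d_2)=\lambda(b_2,d_1)=1$.
\begin{itemize}
\item Every rank-two interval has exactly one increasing chain. For instance, $[\hat0,d_1]$ reads $2,3$ through $b_1$ and $2,1$ through $b_2$, and $[b_1,\hat1]$ reads $3,2$ through $d_1$ and $1,2$ through $d_2$.
\item In $[\hat0,\hat1]$ the only increasing chain is $1,2,3$, because the four chains through the $b_i$ read $2,3,2$ or $2,1,2$.
\end{itemize}
So $\lambda$ is an ER-labeling. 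Yet $\Delta((\hat0,\hat1))$ is an edge disjoint from a $4$-cycle. It is disconnected, hence neither shellable nor Cohen--Macaulay. It is also not contractible, although $\mathcal{AF}_{[\hat0,\hat1]}=\emptyset$; only Stanley's value $\mu(\hat0,\hat1)=0$ survives.

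In your language, straightening rank-two intervals at descents takes $\hat0 b_1 d_1\hat1$ to $\hat0 b_1 d_2\hat1$, then $\hat0 b_2 d_2\hat1$, then $\hat0 b_2 d_1\hat1$, and back to the start. This cycle never reaches the increasing chain. So no monovariant and no linear extension $\prec$ can exist, and your claim that iterated rank-two moves always reach the increasing chain fails too.

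Your fallback is therefore the correct theorem, and it is exactly Bj\"orner's lexicographic-shellability argument. It also covers everything the paper uses. In a proper interval $[P,P']$, every maximal chain carries the same set of distinct labels, namely the children of the edges in $E(P')\setminus E(P)$. Hence the unique increasing chain is automatically lexicographically first, and the number of inversions is precisely the monovariant you were looking for.

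In that version, fix two details.
\begin{itemize}
\item You cannot use an arbitrary maximal segment of $C$ avoided by $C'$: $C$ may be increasing there while $C'\prec C$ because of another segment. Use the first segment $[s_a,s_b]$ where the two chains diverge. If $C$ were increasing on it, EL would make $C$ strictly lexicographically smaller than $C'$ on that segment. Since the two words agree up to $s_a$, this would give $C<_L C'$, a contradiction.
\item Your descent set uses $\not<$, while the paper's increasing chains are only weakly increasing. A repeated label $\lambda(s_{i-1},s_i)=\lambda(s_i,s_{i+1})$ would then put $i$ in $D(C)$ even when $C^{(i)}=C$, and the restriction face would come out wrong. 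Use complementary conventions: strict increase with $\not<$, or weak increase with $>$.
\end{itemize}
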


\smallskip


\subsection{The characteristic polynomial}
The \emph{characteristic polynomial} of 
$\Pi(n)$ is defined as
\[
\chi(\Pi(n),q) = \sum_{x \in P}
\mu(\hat 0, x) \
q^{\rho(\hat 1)-\rho(x)}
\]

Let $P$ be a priority forest, and let $\pi$ be a  partial parking function.  Recall that  \emph{$\sasc(P)$} denotes the number of small ascents of $P$ and \emph{$\lucky(\pi)$} the number of lucky cars (cars that park at their favorite spot) under the $\pi$. 
    
    \begin{thm}
    \label{thm_charpoly}
    Let $P$ be a $(m,n)$ priority forest, and let $I_P$ be its order ideal. Then,
    \begin{align*}
  \chi(I_P,q) =   q^{m-\sasc(P)} (q-1)^{\sasc(P)} = q^{m-\lucky(\pi)} (q-1)^{\lucky(\pi)}.
    \end{align*}

    \end{thm}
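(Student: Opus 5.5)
The plan is to expand the characteristic polynomial as a sum over the ideal and use Lemma~\ref{lem:Mobius_priority_forests} to discard almost every term. The top element of $I_P$ is $P$, of rank $m$, so
\[
\chi(I_P,q)=\sum_{\hat 0\le x\le P}\mu(\hat0,x)\,q^{m-\rho(x)} .
\]
Every $x$ in $I_P$ is a priority forest. By Lemma~\ref{lem:Mobius_priority_forests} with lower element $\hat0$, we have $\mu(\hat0,x)=(-1)^{\rho(x)}$ if $x$ is a Boolean forest and $\mu(\hat0,x)=0$ otherwise; the term $x=\hat0$ contributes $\mu(\hat0,\hat0)=1$, consistent with this. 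So the whole task is to describe the Boolean forests lying below $P$.

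\textbf{Step 1: Boolean forests consist of small ascents.} I will show that a priority forest is Boolean if and only if every one of its edges has the form $\{i-1,i\}$. For the forward direction, recall from the discussion after Lemma~\ref{lem:char_removable} that a Boolean forest is a disjoint union of paths. Each component tree is increasing and has an interval $[a,b]$ as vertex set. An increasing path whose vertex set is $[a,b]$ must be $a,a+1,\dots,b$, so every edge is a small ascent. For the converse, take a forest all of whose edges are of the form $\{i-1,i\}$. Its components are consecutive increasing paths on intervals, and deleting any edge splits an interval path into two interval paths. The result is again a priority forest, so every edge is removable.

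\textbf{Step 2: the Boolean forests below $P$ are exactly the subsets of small ascents of $P$.} Let $A$ be the set of small-ascent edges of $P$, so $|A|=\sasc(P)$. If $x\le P$ is Boolean, then $E(x)\subseteq E(P)$, and by Step 1 every edge of $x$ is a small ascent, so $E(x)\subseteq A$. Conversely, every subset $S\subseteq A$ is the edge set of a priority forest by the converse in Step 1. That forest lies below $P$ because $S\subseteq E(P)$, and it is Boolean. Hence Boolean forests in $I_P$ are in bijection with subsets of $A$, and the rank of the forest is $|S|$.

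\textbf{Step 3: sum and translate.} Substituting Step 2 into the expansion gives
\[
\chi(I_P,q)=\sum_{S\subseteq A}(-1)^{|S|}q^{m-|S|}=q^{m-\sasc(P)}\sum_{S\subseteq A}(-1)^{|S|}q^{\sasc(P)-|S|}=q^{m-\sasc(P)}(q-1)^{\sasc(P)}
\]
by the binomial theorem. The second equality in the statement follows from Lemma~\ref{lem: lucky and probes}(a), which gives $\lucky(\pi)=\sasc(P)$ for any partial parking function $\pi$ with priority forest $P$.

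The main obstacle is Step 1, specifically the claim that a Boolean forest has no vertex with two children. That fact was asserted only briefly in the paper, so I would re-derive it directly. Suppose vertex $p$ has children $c<c'$ in a priority forest. Since the tree is increasing, $c'$ is not a descendant of $c$, and $c'>c$. Deleting the edge $\{p,c\}$ leaves the component of $c$ containing only labels $\ge c$, while the remaining component still contains $p<c$ and $c'>c$, so its vertex set is not an interval. The resulting forest is therefore not a priority forest, which shows $\{p,c\}$ is an ascent and not removable.
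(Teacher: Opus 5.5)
Your proposal is correct and takes the same route as the paper. The paper likewise uses that $\mu(\hat0,x)$ vanishes unless $x$ is Boolean, and that the Boolean forests below $P$ form the ideal $I_W$, where $W$ keeps only the small-ascent edges of $P$; this ideal is a Boolean algebra of rank $\sasc(P)$. Your Steps 1--3 correctly spell out two things the paper's short proof leaves implicit: that Boolean forests are exactly the edge sets made of small ascents, and the binomial summation.
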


    \begin{proof}
 We recover the Boolean forest $W$ of $P$  by deleting from $P$ all edges that are not small ascents.  Therefore, $I_W$ is isomorphic to the Boolean algebra of rank $\sasc(P).$ See Figure \ref{Fig:maximal_wood}.
 The factor of
 $q^{n-\sasc(P)}$, 
 on the other hand, keeps track of the forest of corank smaller than the corank of  $W$, that are automatically zero.
    \end{proof}

\begin{figure}[h!]
    \centering
    \begin{subfigure}{0.2\textwidth}
        \centering
\pgfdeclarelayer{bg}
\pgfsetlayers{bg,main}

\makeatletter
\pgfkeys{
  /tikz/on layer/.code={
    \pgfonlayer{#1}\begingroup
    \aftergroup\endpgfonlayer
    \aftergroup\endgroup
  }
}
\def\node@on@layer{\aftergroup\node@@on@layer}
\makeatother

\tikzset{
    treenode/.style={
        circle,
        draw=black,
        fill=black,
        inner sep=0.65pt,
        outer sep=0pt,
        solid
    },
    every label/.style={
        font=\tiny\fontsize{6}{7}\selectfont,
        label distance = 0.1pt,
        inner sep=1pt,
        outer sep=1pt,
    }
}
\resizebox{\textwidth}{!}{
\begin{tikzpicture}[
    level distance=0.45cm,
    level 1/.style={sibling distance=1cm},
    level 2/.style={sibling distance=0.6cm},
    edge from parent/.style={draw,on layer=bg}
]
\node[treenode,label={right:0}] {}
child { node[treenode,label={right:1}] {}
    child { node[treenode,label={right:6}] {}
        child { node[treenode,label={right:7}] {} }
        child { node[treenode,label={right:8}] {} }}}
child { node[treenode,label={right:2}] {}
    child { node[treenode,label={right:3}] {}
        child { node[treenode,label={right:4}] {} }}
    child { node[treenode,label={right:5}] {} }};
\end{tikzpicture}
}
        \caption{}
    \end{subfigure}
    \qquad
    \qquad
    \begin{subfigure}{0.2\textwidth}
        \centering
\pgfdeclarelayer{bg}
\pgfsetlayers{bg,main}

\makeatletter
\pgfkeys{
  /tikz/on layer/.code={
    \pgfonlayer{#1}\begingroup
    \aftergroup\endpgfonlayer
    \aftergroup\endgroup
  }
}
\def\node@on@layer{\aftergroup\node@@on@layer}
\makeatother

\tikzset{
    treenode/.style={
        circle,
        draw=black,
        fill=black,
        inner sep=0.65pt,
        outer sep=0pt,
        solid
    },
    every label/.style={
        font=\tiny\fontsize{6}{7}\selectfont,
        label distance = 0.1pt,
        inner sep=1pt,
        outer sep=1pt,
    }
}

\resizebox{\textwidth}{!}{
\begin{tikzpicture}[
    level distance=0.45cm,
    level 1/.style={sibling distance=1cm},
    level 2/.style={sibling distance=0.6cm},
    edge from parent/.style={draw,on layer=bg}
]
\node[treenode,label={right:0}] {}
child[edge from parent/.append style={draw=USred,semithick}]
{ node[treenode,label={right:1}, USred] {}
    child[edge from parent/.append style={draw=black, thin, densely dotted}]
    { node[treenode,label={right:6}] {}
        child[edge from parent/.append style={draw=USred, semithick, solid}]
        { node[treenode,label={right:7},USred] {} }
        child { node[treenode,label={right:8}] {} }}}
child[edge from parent/.append style={draw=black, thin, densely dotted}]
{ node[treenode,label={right:2}] {}
    child[edge from parent/.append style={draw=USred,semithick,solid}]
    { node[treenode,label={right:3},USred] {}
        child[edge from parent/.append style={draw=USred,semithick,solid}]
        { node[treenode,label={right:4}, USred] {} }}
    child { node[treenode,label={right:5}] {} }};
\end{tikzpicture}}
        \caption{}
    \end{subfigure}
    \qquad
    \qquad
    \begin{subfigure}{0.2\textwidth}
        \centering
\pgfdeclarelayer{bg}
\pgfsetlayers{bg,main}

\makeatletter
\pgfkeys{
  /tikz/on layer/.code={
    \pgfonlayer{#1}\begingroup
    \aftergroup\endpgfonlayer
    \aftergroup\endgroup
  }
}
\def\node@on@layer{\aftergroup\node@@on@layer}
\makeatother

\tikzset{
    treenode/.style={
        circle,
        draw=black,
        fill=black,
        inner sep=0.65pt,
        outer sep=0pt,
        solid
    },
    every label/.style={
        font=\tiny\fontsize{6}{7}\selectfont,
        label distance = 0.1pt,
        inner sep=1pt,
        outer sep=1pt,
    }
}

\resizebox{\textwidth}{!}{
\begin{tikzpicture}[
    level distance=0.5cm,
    level 1/.style={sibling distance=1cm},
    level 2/.style={sibling distance=0.6cm},
    edge from parent/.style={draw,on layer=bg}
]
\def\rootsep{0.5cm}
\node[treenode, label={above:0}] (F10-T0-V-0) at (0*\rootsep,0)  {}
child { node[treenode, label={right:1}] (F10-T0-V-1) {} };

\node[treenode, label={above:2}] (F10-T1-V-2) at (1*\rootsep,0) {}
child { node[treenode, label={right:3}] (F10-T1-V-3) {}
    child { node[treenode, label={right:4}] (F10-T1-V-4) {} } };

\node[treenode, label={above:5}] (F10-T2-V-5) at (2*\rootsep,0) {};
\node[treenode, label={above:6}] (F10-T3-V-6) at (3*\rootsep,0) {}
child { node[treenode, label={right:7}] (F10-T3-V-7) {} };
\node[treenode, label={above:8}] (F10-T4-V-8) at (4*\rootsep,0) {};
\end{tikzpicture}}
        \caption{}
    \end{subfigure}
    \caption{The Boolean forest of maximal rank in a priority tree.}
    \label{Fig:maximal_wood}
\end{figure}  

    \begin{cor} For $n\ge1$, the characteristic polynomial of $\Pi(n)$ is 
    $
    q (q-1)^{n}
    $
        \end{cor}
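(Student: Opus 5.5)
We compute $\chi(\Pi(n),q)=\sum_{x}\mu(\hat0,x)\,q^{n+1-\rho(x)}$ directly, using Lemma~\ref{lem:Mobius_priority_forests} for the priority forests and Corollary~\ref{cor:Complete_Mobius} for $\hat1$. The top element contributes $\mu(\hat0,\hat1)q^0=0$. By Lemma~\ref{lem:Mobius_priority_forests} with $P=\hat0$, a priority forest $x$ contributes $(-1)^{\rho(x)}q^{n+1-\rho(x)}$ if $x$ is a Boolean forest, and nothing otherwise. So it suffices to count the Boolean forests of each rank $k$.

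We claim that the Boolean forests are exactly the priority forests all of whose edges are small ascents, i.e.\ edges of the form $\{i-1,i\}$. First, as observed after Lemma~\ref{lem:char_removable}, a Boolean forest is a union of increasing paths, each with interval vertex set. An increasing path on an interval $[a,b]$ must be $a,a+1,\dots,b$, so every edge is a small ascent. Conversely, suppose every edge has the form $\{i-1,i\}$. Deleting any one of them splits an interval into two consecutive intervals, so the result is again a priority forest, and the edge is removable.

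Such forests are determined by choosing which of the $n$ possible edges $\{0,1\},\{1,2\},\dots,\{n-1,n\}$ are present. Equivalently, the Boolean forests form the principal ideal of the path $0-1-\cdots-n$, which is a Boolean algebra of rank $n$ by Lemma~\ref{lem:char_removable}. Hence there are $\binom nk$ Boolean forests of rank $k$, and
\[
\chi(\Pi(n),q)=\sum_{k=0}^{n}\binom nk(-1)^k q^{n+1-k}=q\,(q-1)^n .
\]

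There are two places to check. The first is that the top element really contributes zero. The exponent of $q$ there is $\rho(\hat1)-\rho(\hat1)=0$, and Corollary~\ref{cor:Complete_Mobius} gives $\mu(\hat0,\hat1)=0$ for $n\ge1$; this is exactly why the hypothesis $n\ge1$ is needed. The second is the characterization of Boolean forests as small-ascent-only forests, which is the only structural step and the one I expect to need care. Its key fact is that an increasing path with interval vertex set must consist of consecutive labels.

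As a sanity check, this matches Theorem~\ref{thm_charpoly} applied to the path tree, whose $\sasc$ equals $n$, multiplied by the extra factor $q$ coming from $\rho(\hat1)=n+1$.
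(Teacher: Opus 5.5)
Your proof is correct and follows the paper's argument. Only Boolean forests have $\mu(\hat0,x)\neq 0$, and they form a copy of the Boolean algebra $B_n$, namely the ideal of the path $0-1-\cdots-n$; the vanishing of $\mu(\hat0,\hat1)$ then supplies the factor $q$. Your explicit identification of Boolean forests with forests whose edges are all small ascents fills in the step that the paper's one-line proof leaves implicit.
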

    \begin{proof} The only forests $P$  of $\Pi(n)$ with Möbius function $\mu(\hat0,P)\neq0$  are the Boolean forests, and when we restrict to $P$-Boolean forests we get a copy of the boolean algebra $B_n$. On the other hand, $\mu(\hat0,\hat1)=0$, hence we get a factor of $q.$
 \end{proof}

The \emph{Whitney numbers of 1st kind}, denoted by $w_k$, are defined as the sum of the values of the Möbius function of all rank $k$ elements of a poset. They are the coefficients of their respective characteristic polynomials. Therefore,  for forest ideals, they are equal to
\[
w_k(I_P) = (-1)^{k} \binom{\sasc(P)}{k}
\]
On the other hand, the Whitney of 1st kind $w_k$ of $\Pi(n)$ are given by  
$w_k(\Pi(n)) =(-1)^k \binom{n}{k}$ for $k \in [n-1]$, and 0 otherwise.
\vspace{.15cm}

\section{Final comments}

\begin{enumerate}[wide, labelindent=0pt]
\item Fix $n \geq 1$. A minimal factorization of the cycle $c_n=(1 \ 2 \ \cdots \ n) \in \mathbb S_{n}$ is a sequence $(\tau_1, \tau_2, \dots, \tau_{n-1})$ of $n-1$ transpositions satisfying $\tau_1 \tau_2 \cdots \tau_{n-1} = c_n$. Dénes \cite{Denes} showed that the number of minimal factorizations of the long cycle $c_n$ is given by Cayley's tree formula,  $n^{n - 2}$. Several bijections mapping minimal factorizations to trees and parking functions are known,  see \cite{KimSeo2003, Biane_J-V, IrvingRattan2020, Biane2002, Stanley_parking, BernardiMorales2013, Denes} and the references therein. 
In light of Theorems \ref{thm:maximal_chains_forests} and \ref{thm:maximal_chains_parking_functions}, a natural question arises: is there a sensible way to map a maximal chain of the priority lattice to a minimal cycle factorization? Doing so would require finding the proper analogs of priority forest and Jordan-Hölder permutation in this setting.  
This direction is the subject of ongoing work.\\

\item 
Let $P$ be a finite graded poset of rank $n$ with $\hat{0}$ and $\hat{1}$ and with rank function $\rho$.
The flag $f$-vector of $P$ is defined  as the function that assigns to 
$S\subseteq [n-1]$  the number $\alpha_P(S)$ of chains  
$
\hat{0} = t_0 < t_1 < \cdots < t_s = \hat{1}
$
of $P$ such that
$
S = \{\rho(t_1), \rho(t_2), \dots, \rho(t_{s-1})\}.
$ The flag $h$-vector of $P$ is the function that assigns to $S \subseteq [n-1]$,  the integer $\beta_P(S)$ defined as 
$\beta_P(S) = \sum_{T \subseteq S} (-1)^{|S - T|} \alpha_P(T).$ 
Define the \emph{flag quasi-symmetric function} as
\[
\mathcal{F}_P
=
\sum_{S \subseteq [n-1]} \beta_P(S) Q_S.
\]
where $Q_S$ denotes the Gessel's quasisymmetric function. It would be very interesting to understand the fkag quasi-symmetric function $\mathcal{F}_{\Pi(n)}$.
Note that since $\Pi(2)$ is a symmetric poset,
$\mathcal{F}_{\Pi(2)}$ is a symmetric function, but this is not the case for $n\ge2.$ 
We add some  data computed with the help of Sagemath: 
\begin{align*}
&\mathcal{F}_{\Pi(2)} = Q_{[1,2]}+Q_{[2,1]}+Q_{[3]}.
\end{align*}
\begin{align*}
&\mathcal{F}_{\Pi(3)} = Q_{[1,1,2]} + 3\,Q_{[1,2,1]} + 2\,Q_{[1,3]} + 4\,Q_{[2,2]} + 5\,Q_{[3,1]} + Q_{[4]}.
\end{align*}
\begin{multline*}
\mathcal{F}_{\Pi(4)} = Q_{[1,1,1,2]} + 6\,Q_{[1,1,2,1]} + 3\,Q_{[1,1,3]} + Q_{[1,2,1,1]} + 13\,Q_{[1,2,2]} 
+ 23\,Q_{[1,3,1]} \\+ 3\,Q_{[1,4]} 
 +  \ 6\,Q_{[2,1,2]}  + 21\,Q_{[2,2,1]} + 8\,Q_{[2,3]} 
+ Q_{[3,1,1]} + 15\,Q_{[3,2]} + 23\,Q_{[4,1]} + Q_{[5]}.
\end{multline*}
Afterwards, the expression becomes too messy to print, but can be easily recovered with our SageMath implementation, \url{https://github.com/adrianlillo/priority_lattice}.\\

\item Observe that the principal ideal appearing Figure \ref{fig:Interval_Pi3}  is isomorphic to $\Pi(2)$ (compare with Figure \ref{fig:L2}). We ask:
   How many principal ideals of  $\Pi(n)$ are isomorphic to $\Pi(m)$, for some $1\le  m\le n$? 
   Let $\gamma_n$ denote the number of principal ideals of $\Pi(n)$ that are isomorphic to $\Pi(m)$ with $m \le n$.
   The sequence $(\gamma_n)_{n\geq 1}$ begins as  
    $
    2, 4, 8, 14, 22 ,32, 44, \dots
    $
    This sequence seems to be  \url{https://oeis.org/A014206}, which is given by the formula  \[\gamma_n = n^2 + n + 2.\] 
We could also modify this question and ask the number $\theta_n$ of principal filters of $\Pi(n)$ (this is, intervals of the form $[P, \hat 1]$) that are isomorphic to $\Pi(m)$ with $m \leq n$. This sequence begins as 
$
2, 4, 10, 34
,154 ,874, \dots
$
and seems to be \url{https://oeis.org/A003422}, the sequence of left factorials, given by 
\[
\theta_n = \sum_{i=0}^{n-1} k!.
\]

\item
A \emph{sublattice dismantling} is a subdivision of a lattice into two non-empty sublattices. A lattice is sublattice dismantlable if it can be decomposed into 1-element lattices by consecutive sublattice dismantlings. We have checked with the help of SageMath that $\Pi(n)$ is sublattice dismantlable  for  $n=$ 1 to 5. Is this always the case?

\end{enumerate}

\section*{Acknowledgements}
We thank Stefan Trandafir and Mathieu Josuat-Vergnes for fruitful  conversations and an inspiring exchange of ideas.  The second author would like to thank the IRIF of the Université de Paris Cité for their hospitality, and  the opportunity to conduct part of this research at their facilities. 
This work has been partially supported by Grants PID2020-117843GB-
I00 and PID2024-157173NB-I00 funded by MCIN/AEI/10.13039/501100011033 and by FEDER, UE.

\printbibliography

\end{document}